\numberwithin{equation}{section}
\newtheorem{satz}{Theorem}[section]
\newtheorem{proposition}[satz]{Proposition}
\newtheorem{lemma}[satz]{Lemma}
\newtheorem{korollar}[satz]{Corollary}
\newtheorem*{question*}{Question}
\newtheorem*{questions*}{Questions}
\newtheorem*{bemerkung*}{Remark}
\theoremstyle{remark}
\newtheorem*{beispiel*}{Example}
\renewcommand{\epsilon}{\varepsilon}
\def\E{{\mathbb E}}
\def\C{{\mathbb C}}
\def\P{{\mathbb P}}
\def\R{{\mathbb R}}
\def\N{{\mathbb N}}
\def\S{{\mathbb S}}
\def\Var{{\rm Var}}
\def\vol{{\rm vol}}
\def\sign{{\rm sign}}
\def\tr{{\rm tr}}
\def\LSq{{\rm LS}_q}
\def\be{\begin{equation}}
\def\en{\end{equation}}
\def\bee{\begin{eqnarray*}}
\def\ene{\end{eqnarray*}}
\title[Concentration on spheres]{Concentration of measure on spheres and\\ related manifolds}
\author{Friedrich G\"{o}tze}
\address{Friedrich G\"otze, Faculty of Mathematics, Bielefeld University, Germany}
\email{goetze@math.uni-bielefeld.de}
\author{Holger Sambale}
\address{Holger Sambale, Faculty of Mathematics, Bielefeld University, Germany}
\email{hsambale@math.uni-bielefeld.de}
\begin{document}

\begin{abstract}
    We study various generalizations of concentration of measure on the unit sphere, in particular by means of log-Sobolev inequalities. First, we show Sudakov-type concentration results and local semicircular laws for weighted random matrices. A further branch addresses higher order concentration (i.\,e., concentration for non-Lipschitz functions which have bounded derivatives of higher order) for $\ell_p^n$-spheres. This is based on a type of generalized log-Sobolev inqualities referred to as $\mathrm{LS}_q$-inequalities. More generally, we prove higher order concentration bounds for probability measures on $\R^n$ which satisfy an $\mathrm{LS}_q$-inequality. Finally, we derive concentration bounds for sequences of smooth symmetric functions on the Euclidean sphere which are closely related to Edgeworth-type expansions.
\end{abstract}

\subjclass{60B20, 60E15, 60F10}
\keywords{Concentration of measure, $\ell_p^n$-sphere, local semicircle law, logarithmic Sobolev inequality, random matrices, Sudakov's typical distributions, symmetric functions}
\thanks{Research  supported by GRF--SFB 1283/2 2021 -- 317210226}.

\date{\today}

\maketitle

\section{Introduction}

\subsection{Concentration on the sphere}
One of the classical objects in the theory of concentration of measure, chosen as initial example in the seminal monograph by M.\ Ledoux \cite{Led01}, is the Euclidean unit sphere $\S_2^{n-1} := \{x \in \R^n \colon |x|_2 = 1\}$, equipped with the uniform distribution $\nu_{2,n}$ (which may be realized as normalization of an $n$-dimensional standard Gaussian measure) . For instance, any function $f \colon \S_2^{n-1} \to \R$ which is $1$-Lipschitz and has $\nu_{2,n}$-mean zero satisfies the concentration inequality
\begin{equation}\label{LipschitzSph}
\nu_{2,n}(|f| \ge t) \le 2e^{-(n-1)t^2/2}
\end{equation}
for any $t \ge 0$, cf.\ e.\,g.\ \cite[Prop.\ 10.3.1]{BCG23}. In particular, this inequality shows that Lipschitz functions on $\S_2^{n-1}$ do not only have sub-Gaussian tails (as known from many other classical situations), but they are even stochastically of order $1/\sqrt{n}$.

The array of techniques available to study various aspects of the concentration of measure phenomenon (not only on $\S_2^{n-1}$) is vast, and we only refer to selected monographs like \cite{Led01,BLM13,BCG23}. In this note, we shall especially focus on the entropy method, which is based on so-called logarithmic Sobolev inequalities. By a classical result by Mueller and Weissler \cite{MW82}, the uniform distribution $\nu_{2,n}$ on the Euclidean unit sphere $\S_2^{n-1}$ satisfies a logarithmic Sobolev inequality with (optimal) constant $1/(n-1)$, i.\,e.,
\begin{equation}\label{LSISphere}
\mathrm{Ent}_{\nu_{2,n}} (f^2) \le \frac{2}{n-1} \int_{\S_2^{n-1}} \lvert \nabla_S f \rvert_2^2 d\nu_{2,n}
\end{equation}
for any $f \colon \S^{n-1} \to \R$ sufficiently smooth (say, locally Lipschitz). Here, $\nabla_S f$ denotes the spherical (or intrinsic) gradient of $f$, and $\mathrm{Ent}$ denotes the entropy functional (cf.\ Section \ref{LSqRn}). For the moment, we just mention that one way of deducing \eqref{LipschitzSph} is due to \eqref{LSISphere} and the so-called Herbst argument. In particular, for differentiable functions the Lipschitz condition in \eqref{LipschitzSph} may be reformulated as $|\nabla_S f| \le 1$.

Let us briefly sketch a number of further aspects of (spherical) concentration which we shall be interested in, in particular the study of so-called weighted sums $\langle X, \theta \rangle = \theta_1 X_1 + \ldots + \theta_n X_n$. In the simplest case, the $X_i$ are i.i.d.\ real random variables, while $\theta = (\theta_1, \ldots, \theta_n)$ is chosen uniformly at random from $\S_2^{n-1}$ (i.\,e., according to $\nu_{2,n}$). In this situation, Sudakov \cite{Sud78} observed that for $n$ large, the distribution functions of the $\langle X, \theta \rangle$ are concentrated about a ``typical distribution'', which can be taken as the spherical mean of the distribution function of $\langle X, \theta \rangle$. In particular removing independence and considering isotropic random vectors with suitable correlation conditions, such results have been intensely studied in the past years, cf.\ the monograph \cite{BCG23}.

One of the tools which has found application in the context of weighted sums is so-called higher order concentration of measure. By the latter, one refers to concentration bounds for functions which are non-Lipschitz (or Lipschitz of an order which leads to asymptotically non-optimal bounds) but have bounded derivatives of some higher order, cf.\ e.\,g.\ \cite{AW15}. Typical examples are quadratic forms (for order two) and, more in general, forms of some higher order. In \cite{BCG17}, second order concentration results on $\S_2^{n-1}$ have been deduced, leading to sharpened concentration bounds for functions with a bounded Hessian. These bounds have been partially extended to higher orders in \cite{BGS19}. Technically, results of this kind often build upon logarithmic Sobolev inequalities.

\subsection{Generalizations and overview}
There are multiple ways of generalizing spherical concentration results. In this note, we shall not be interested in discrete situations like $\{\pm1\}^n$ (equipped with various types of probability measures), but in spaces with a manifold structure. We address a number of situations, all of which are in some sense related to the topics introduced at the beginning of this note.

One possible generalization are Haar measures on classical compact matrix groups like the special orthogonal group $\mathrm{SO}(n)$ or the unitary group $\mathrm{U}(n)$, for which log-Sobolev results are available in \cite{Mec19}, and (in some sense related) on Stiefel and Grassmann manifolds, for which higher order concentration was studied in \cite{GS23}. In turns out that some of these results have applications in the study of Sudakov-type results for weighted random matrices, i.\,e., we address Sudakov-type questions in the context of matrices and their resolvents rather than vectors.

Moreover, we consider spheres with respect to different norms, especially $\ell_n^p$-norms, which are equipped with two classical probability measures (the cone and the surface measure). Here, a variety of concentration results is known (of which we will give a brief overview later), but it seems that the entropy method and especially higher order concentration have not yet gained much attention. In fact, in this context it is more adequate to work with a certain generalization of log-Sobolev inequalities (called $\mathrm{LS}_q$-inequalities), which is a topic of independent interest.

A further branch of results concerns the asymptotic behaviour of smooth symmetric functions on the sphere. As shown in \cite{GNU17}, such functions can be asymptotically expressed in terms of certain elementary polynomials, which under suitable conditions gives rise to Edgeworth-type expansions. In fact, this scheme can even be regarded as a universal structure for all Edgeworth-type expansions. In this context, one may ask for concentration results.

Let us provide a brief overview on the structure and the main results of this note.
\begin{itemize}
    \item In Section \ref{sec:Sud} \emph{Sudakov-type results and local laws for weighted random matrices}, we prove a Sudakov-type concentration result for weighted random matrices (more precisely, their resolvents). This is based on concentration on the special orthogonal group. Moreover, we obtain local semicircular laws for this class of weighted random matrices.
    \item Section \ref{LSqRn} \emph{Logarithmic Sobolev inequalities and generalizations} addresses a general framework of $\mathrm{LS}_q$-inequalities, mostly on $\R^n$. Our main result as stated in Theorem \ref{HOLqRn} are higher order concentration bounds in presence of $\mathrm{LS}_q$-inequalities. Examples include Hanson--Wright type inequalities.
    \item In Section \ref{LSISphSec} \emph{Log-Sobolev-type inequalities and concentration on $\ell_p^n$-spheres}, we prove $\mathrm{LS}_q$-inequalities for the cone measure $\mu_{p,n}$ on $\ell_{p,n}$-spheres, cf.\ Theorem \ref{LSICone}. This leads to first and higher order concentration results. Here, we address both the cone and the surface measure in results like Proposition \ref{LipschUnif} and Theorem \ref{HOSph}.
    \item In Section \ref{ExSect} \emph{Concentration bounds for symmetric functions on the sphere}, we derive concentration bounds for families of smooth functions on the Euclidean sphere related to Edgeworth-type expansions as studied in \cite{GNU17}. The inequalities we apply can be derived from the results in Section \ref{LSISphSec}.
\end{itemize}
Some technical tools and background information are deferred to the appendix.

In large parts of Sections \ref{LSqRn}--\ref{ExSect}, two parameters $p$ (for instance, related to $\ell_p^n$-norms) and $q$ are present, and whenever both of them appear in the same context, we assume them to be related to each other as follows.

\textbf{General convention.} Throughout this note, we regard $q \in [1,2]$ as the H\"{o}lder conjugate of $p \in [2, \infty]$ and vice versa, i.\,e., $q=p/(p-1)$ and $p=q/(q-1)$.

Finally, by $c>0$ or $C>0$ we typically denote absolute constants, while the notation $c_Q>0$ or $C_Q >0$ for some set of quantities $Q$ refers to constants which only depend on the quantities $Q$. Note also that these constants may vary from line to line if not indicated otherwise.

\section{Sudakov-type results and local laws for weighted random matrices}\label{sec:Sud}

\subsection{General setting and results}

Let $X = (X_1, \ldots,X_n)$ be a random vector in $\R^n$ and $\theta \in \S_2^{n-1}$. Starting with the work of V.\,N.\ Sudakov \cite{Sud78}, the distribution and concentration properties of the weighted sums
\[
\langle X, \theta \rangle = \sum_{j=1}^n X_j \theta_j
\]
have been subject to intense investigation, in particular with regard to certain ``typical distributions''.

Our aim is to study Sudakov-type results for weighted random matrices. More precisely, we examine the spectral distribution of weighted symmetric $n\times n$ matrices of the form $X_\Theta:= \Theta \circ X$, where $X \in \R^{n \times n}$ is a symmetric random matrix, $\Theta \in \R^{n \times n}$ is symmetric and $\circ$ denotes the Hadamard product, i.\,e.,
\[ (\Theta \circ X )_{ij} := \Theta_{ij} X_{ij}, \quad  i,j=1, \ldots n. \]
Here we shall consider the following random matrix model:
\begin{enumerate}
\item $X_{ij}$, $1 \le i \le j \le n$, are i.i.d.\ random variables,
\item $\E X_{12} = 0,$ $\E X_{12}^2 =1$.
\end{enumerate}
Note that the variance of $(X_\Theta)_{ij} $ is given by $(\Theta_{ij}^2)_{ij}$.

Let us establish conditions on the weight matrices $\Theta$ which ensure that the eigenvalue distribution of the $X_\Theta$ converges to a limit distribution as $n \to \infty$. In the simplest case, if all $\Theta_{ij} = 1/\sqrt{n}$, Wigner's semicircle law guarantees convergence towards the semicircle distribution, i.\,e., the probability measure with density
\[
p(x):=\frac 1 {2 \pi} \sqrt{4 -x^2} \mathbbm{1}_{\{|x| \le 2\}}.
\]

More generally, denote by $F_\Theta (x):= \frac 1 n \sum_{j=1}^n \mathbbm{1}_{\{\lambda_j(X_\Theta) \le x\}}$ the counting function of the eigenvalues of $X_\Theta$. If $Z$ is a symmetric matrix with entries $Z_{ij}, i\le j$, which are independent or satisfy a martingale-type dependence and have variance $\mathrm{Var}(Z_{ij}) = \sigma_{ij}^2$, let $B_i^2 := n^{-1} \sum_{j=1}^n \sigma_{ij}^2$ be the mean variance of the $i$-th row. Assuming $n^{-1} \sum_{i=1}^n |B_i^2-1| \to 0$ as $n \to \infty$ and $\max_i B_i \le C$ for some absolute constant $C$, by \cite[Th.\ 1 \& Cor.\ 1]{GNT15} we have 
\[ \Delta_Z:=\sup_x| \E_ZF_Z(x) - G(x)| \to  0\qquad \text{as } n \to \infty,\]
where $G(x)$ denotes the distribution function of the semi-circle law, if a Lindeberg condition for the $Z_{ij}$ holds. In our case, the conditions on the variance profile are satisfied if $\sigma_{ij}^2 = \Theta_{ij}^2$ is doubly stochastic, i.\,e.,
\begin{enumerate}\setcounter{enumi}{2}
    \item $\Theta \circ \Theta = (\Theta_{ij}^2)_{ij}$ is doubly stochastic.
\end{enumerate}
We also call $\Theta \circ \Theta$ the Hadamard square of $\Theta$. Moreover, the Lindeberg condition for $Z_{ij} = X_{ij} \Theta_{ij}$ is implied by
\begin{enumerate}\setcounter{enumi}{3}
    \item $\frac  1 n \sum_{j=1}^n \max_{j} |\Theta_{ji}| \to 0$ as $n \to \infty$.
\end{enumerate}
Hence, the conditions (1)--(4) are required in the sequel.

Recall that in random matrix theory, one commonly studies Stieltjes transforms of the measures under consideration. If $\mu$ is a probability measure on $\R$, its Stieltjes transform is given by
\[
s_\mu(z) = \int \frac{1}{x-z}\mu(dx)
\]
for any $z \in \C \setminus \R$. By Stieltjes inversion, one may recover the measure $\mu$ from its Stieltes transform. In particular, the Stieltjes transforms of the expected empirical distribution $\E_X \frac 1 n \sum_{j=1}^n \delta_{\lambda_k}$ of the eigenvalues $\lambda_k$ of $n^{-1/2} X$ and $X_\Theta$, respectively, are given by $\E_X s_n(z)$ and $\E_X s_\Theta (z)$, where $\E_X$ denotes taking the expectation with respect to $X$ and
\[
s_n(z):= \frac 1 n \tr ((n^{-1/2} X-z I)^{-1}) , \quad s_\Theta(z):= \frac 1 n \tr( (\Theta \circ X- z I )^{-1}).
\]
Here and in the sequel, $I$ denotes $n \times n$ identity matrix. Moreover, we shall always denote the Stieltjes transform of the semicircle law by
\[
s(z) = \frac{1}{2\pi}\int_{-2}^2 \frac{\sqrt{4-x^2}}{x-z} dx.
\]

The results in \cite{GNT15} in particular involve the convergence of the expected Stieltjes transforms $\E_X s_\Theta(z) \to s(z)$ as $n \to \infty$. Moreover, for independent $X_{ij}$ with doubly stochastic variance profile rates of convergence for $\Delta_\Theta :=\sup_x| \E_XF_\Theta(x) - G(x)| \to 0$ and $\E_X s_\Theta(z) \to s(z)$ as $n \to \infty$ were derived together with local laws in \cite{EYY12} under stronger moment conditions and more restrictive conditions on the weights, requiring $n\Theta_{ij}^2$ to be bounded above and below by positive constants. Our model of weights will be more general, only assuming (4).

We would like to study the concentration of measure phenomenon for $\Theta \mapsto \E_X s_\Theta$ in the spirit of Sudakov for our model. Since the the set of doubly stochastic square weights $P_{ij}=\Theta_{ij}^2$ is a convex set, \cite{CDS10} modelled it via a uniform distribution on it. However, since we intend to use log-Sobolev techniques, the latter turns out to be inconvenient. We therefore suggest the following probability model, where $\Theta$ is a functional on a space better suited to our purpose.

Let $\mathrm{SO}(n)$ denote the special orthogonal group of dimension $n$, which we endow with its Haar measure $\mu_{\mathrm{SO}(n)}$, and consider the mapping
\begin{equation}
O \mapsto \Theta = (\Theta_{ij})_{ij},\qquad \text{where } \Theta_{ij} := \sqrt{ \frac 1 2 (O_{ij}^2 + O_{ji}^2)}. \label{weightdef}
\end{equation}
Obviously, $\Theta$ is symmetric, and its Hadamard square is doubly stochastic by the orthogonality of $O$. In fact, it seems reasonable to conjecture that \eqref{weightdef} defines a surjective mapping into the space of the symmetric matrices with non-negative entries and doubly stochastic Hadamard squares. Therefore, we equip the space of the weight matrices with the measure induced by \eqref{weightdef}. In particular, (4) is satisfied automatically for this model (cf.\ e.\,g.\ Lemma \ref{lem:TailsMaxWei} below).

\subsubsection{Concentration results}
In studying a local semicircular law, we first establish sub-Gaussian concentration of $\E_X s_\Theta(z)$ around the expected Stieltjes transform $\E s_\Theta(z)$. The key observation is to identify $\E_X s_\Theta(z)$ as a Lipschitz function of $O \in \mathrm{SO}(n)$.

Before stating the result, let us fix some notation which we will use all over this section. As stated before, $\E_X$ denotes taking the expectation with respect to the matrix $X$, and $\E_{\mathrm{SO}(n)}$ denotes taking the expectation with respect to $O$ which is distributed according to the Haar probability measure on $\mathrm{SO}(n)$. In particular, we require $X$ and $O$ to be independent, so that $\E = \E_X\E_{\mathrm{SO}(n)}$. Moreover, given any $z \in \C$, we set $v := \Im(z)$.

\begin{proposition}\label{propCoMRM}
    Let $n \ge 3$ and $z \in \C \setminus \R$. Then, it holds that
    \[
    \mu_{\mathrm{SO}(n)}\big(\big|\E_Xs_\Theta(z) - \E s_\Theta(z)\big| \ge t\big) \le 2 \exp\Big\{- \frac{|v|^4 n^2}{768} t^2\Big\}
    \]
    for any $t \ge 0$. Furthermore, if $|X_{ij}| \le K$ for any $i,j$ and some $K > 0$, it holds that
    \[
    \mu_{\mathrm{SO}(n)}\big(\big|s_\Theta(z) - \E_{\mathrm{SO}(n)} s_\Theta(z)\big| \ge t\big) \le 2 \exp\Big\{- \frac{|v|^4 n^2}{768K^2} t^2\Big\}
    \]
    for any $t \ge 0$.
\end{proposition}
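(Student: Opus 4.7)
The plan is to identify $\E_X s_\Theta(z)$ (respectively $s_\Theta(z)$ for fixed bounded $X$) as a Lipschitz function of $O \in \mathrm{SO}(n)$ with respect to the Hilbert--Schmidt norm, and then apply the standard sub-Gaussian concentration inequality for Lipschitz functions on $\mathrm{SO}(n)$ stemming from a log-Sobolev inequality with constant of order $1/n$ (cf.\ \cite{Mec19}, or the Bakry--Émery criterion applied to the Ricci curvature $(n-2)/4$ of $\mathrm{SO}(n)$).

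First I would verify that the map $O \mapsto \Theta$ from \eqref{weightdef} is $1$-Lipschitz with respect to the Hilbert--Schmidt norm. For each pair $(i,j)$ the function $(a,b) \mapsto \sqrt{(a^2+b^2)/2}$ has Euclidean gradient of norm at most $1/\sqrt{2}$; hence $|\Theta_{ij}(O_1)-\Theta_{ij}(O_2)|^2 \le \tfrac{1}{2}((O_1-O_2)_{ij}^2+(O_1-O_2)_{ji}^2)$, and summing over $(i,j)$ gives $\|\Theta(O_1)-\Theta(O_2)\|_{\mathrm{HS}} \le \|O_1-O_2\|_{\mathrm{HS}}$.

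Next, for fixed $X$, set $M_k := \Theta_k \circ X - zI$ for $k=1,2$. The resolvent identity $M_1^{-1}-M_2^{-1} = M_1^{-1}(M_2-M_1)M_2^{-1}$ combined with the standard bounds $\|M_k^{-1}\|_{\mathrm{op}} \le 1/|v|$ and $|\tr(A)| \le \sqrt{n}\,\|A\|_{\mathrm{HS}}$ yields
\[
|s_\Theta(z)(O_1)-s_\Theta(z)(O_2)| \le \frac{\|(\Theta_1-\Theta_2)\circ X\|_{\mathrm{HS}}}{\sqrt{n}\, v^2}.
\]
For the first claim, applying $\E_X$ together with Jensen's inequality and $\E X_{ij}^2=1$ produces Lipschitz constant $L=(\sqrt{n}\,v^2)^{-1}$ for $O \mapsto \E_X s_\Theta(z)$. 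For the second claim, the assumption $|X_{ij}|\le K$ yields pointwise $\|(\Theta_1-\Theta_2)\circ X\|_{\mathrm{HS}} \le K\|\Theta_1-\Theta_2\|_{\mathrm{HS}}$, giving Lipschitz constant $L = K(\sqrt{n}\,v^2)^{-1}$ for $O \mapsto s_\Theta(z)$ at fixed $X$.

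Finally, since $s_\Theta(z)$ is complex-valued, I would apply the Gaussian-type concentration on $\mathrm{SO}(n)$ to its real and imaginary parts separately and combine via a union bound. The assumption $n\ge 3$ allows the $(n-2)/n$ correction, the loss from splitting real and imaginary parts, and the precise value of the log-Sobolev constant to be absorbed into the generous denominator $768$ in the exponent. The main obstacle is establishing the correct Lipschitz bound for $O \mapsto \E_X s_\Theta(z)$, which requires combining the $1$-Lipschitz estimate for the square-root map $O\mapsto\Theta$, the resolvent estimate for $s_\Theta(z)$, and the averaging over $X$; once this bound is in hand, the tail estimates are a routine application of the general concentration theory for Haar measure on $\mathrm{SO}(n)$.
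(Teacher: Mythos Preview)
Your proposal is correct and follows the same high-level strategy as the paper (Lipschitz bound plus log-Sobolev concentration on $\mathrm{SO}(n)$), but the mechanics differ. The paper computes the partial derivatives $\partial f(\Theta(O))/\partial O_{ij}$ explicitly via the chain rule, using Lemma \ref{lem:Res} (4) to write $\partial_{V_{ij}} s_V(z) = -n^{-1}(R_V^2)_{ij}X_{ij}(1+\mathbbm{1}_{\{i\ne j\}})$ and then $\partial_{O_{ij}}\Theta_{ij} = O_{ij}/\Theta_{ij}$, and finally bounds $\sum_{i,j}|\partial_{O_{ij}} f|^2 \le 32/(n|v|^4)$ via Lemma \ref{lem:Res} (1)\&(2). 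You instead establish the Lipschitz property globally in two steps: first $O\mapsto\Theta$ is $1$-Lipschitz in Hilbert--Schmidt norm (a pointwise estimate on the scalar map $(a,b)\mapsto\sqrt{(a^2+b^2)/2}$), and then $\Theta\mapsto s_\Theta(z)$ is controlled by the resolvent identity together with $|\tr(\cdot)|\le\sqrt{n}\,\|\cdot\|_{\mathrm{HS}}$. Your route is a bit more elementary---it avoids any differentiation and the non-differentiability of $\Theta$ at points where $O_{ij}=O_{ji}=0$ is automatically handled---and in fact yields a slightly smaller Lipschitz constant ($L^2=1/(n|v|^4)$ rather than $32/(n|v|^4)$), so the stated exponent is easily reached. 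The paper's derivative computation, on the other hand, is reused almost verbatim for related functionals (Lemma \ref{lem:CovBdsSOn}), which explains that choice of presentation.
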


Hence, $\E_X s_\Theta(z)$ concentrates around $\E s_\Theta(z)$ for any fixed $z \in \C \setminus \R$, and the difference is stochastically of order $1/(n|v|^2)$. The concentration bounds shown in Proposition \ref{propCoMRM} have a sort of ``dual'' analogue with reversed roles of $X$ and the weights $\Theta$. For that, we need to impose further assumptions on the matrix $X$.

We say that a random vector $X \in \R^n$ admits sub-Gaussian concentration for Lipschitz functions with constant $r > 0$ if any function $f \colon \R^n \to \R$ which is $L$-Lipschitz satisfies $\E|f(X)| < \infty$ and
\begin{equation}\label{eq:LSICon}
    \P(|f(X) - \E f(X)| \ge t) \le 2 \exp\Big\{- \frac{t^2}{r^2L^2}\Big\}
\end{equation}
for any $t \ge 0$. This (standard) definition extends to random matrices by replacing $\R^n$ by $\R^{n \times n}$, for instance.

\begin{proposition}\label{prop:CoMRMX}
    Let $n \ge 3$ and $z \in \C \setminus \R$, and assume that $X$ admits sub-Gaussian concentration for Lipschitz functions with constant $r> 0$. Then, we have
    \[
    \P\big(\big|\E_{\mathrm{SO}(n)}s_\Theta(z) - \E s_\Theta(z)\big| \ge t\big) \le 2 \exp\Big\{- \frac{|v|^4 n^2}{4r^2}t^2\Big\}
    \]
    for any $t \ge 0$. Furthermore, it holds that
    \begin{align*}
    \P_X\big(\big|s_\Theta(z) - \E_X s_\Theta(z)\big| \ge t\big) &\le 2 \exp\Big\{- \frac{|v|^4 n}{4r^2\max_{ij} \Theta_{ij}^2}t^2\Big\}\\
    &\le 2 \exp\Big\{- \frac{|v|^4 n}{4r^2}t^2\Big\}
    \end{align*}
    for any $t \ge 0$.
\end{proposition}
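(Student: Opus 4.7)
The plan is to mirror the approach of Proposition \ref{propCoMRM} with the roles of $X$ and $\Theta$ exchanged: for each of the two inequalities I would identify a function of $X$ that is Lipschitz in the Hilbert--Schmidt norm and then invoke the hypothesis \eqref{eq:LSICon}. The core estimate is a Lipschitz bound on $s_\Theta(z)$ in $X$: writing $H = \Theta \circ X$ and $H' = \Theta \circ X'$, the resolvent identity $(H - zI)^{-1} - (H' - zI)^{-1} = (H - zI)^{-1}(H' - H)(H' - zI)^{-1}$, combined with $\|(H - zI)^{-1}\|_{op} \le 1/|v|$, $\|(H - zI)^{-1}\|_{HS} \le \sqrt{n}/|v|$ (both valid for any symmetric $H$), and the trace Cauchy--Schwarz inequality $|\tr(AB)| \le \|A\|_{HS}\|B\|_{HS}$, would yield
\[
|s_\Theta(z)(X) - s_\Theta(z)(X')| \le \frac{\|\Theta \circ (X - X')\|_{HS}}{\sqrt{n}\,|v|^2}.
\]

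For the second inequality, $\Theta$ is fixed and $\|\Theta \circ (X - X')\|_{HS} \le \max_{ij}|\Theta_{ij}|\,\|X - X'\|_{HS}$, so $X \mapsto s_\Theta(z)$ is Lipschitz with constant $L_2 = \max_{ij}|\Theta_{ij}|/(\sqrt{n}\,|v|^2)$. Since $s_\Theta(z)$ is $\C$-valued, I would apply \eqref{eq:LSICon} to its real and imaginary parts separately and combine via a union bound; the factor $4$ in the denominator of the stated bound absorbs this splitting together with the trivial estimate that probabilities are at most $1$. The weakening $\max_{ij}\Theta_{ij}^2 \le 1$ is immediate from \eqref{weightdef} and $|O_{ij}| \le 1$ for $O \in \mathrm{SO}(n)$.

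For the first inequality, I would apply the same scheme to $X \mapsto \E_{\mathrm{SO}(n)} s_\Theta(z)$. Jensen's inequality gives
\[
|\E_{\mathrm{SO}(n)} s_\Theta(z)(X) - \E_{\mathrm{SO}(n)} s_\Theta(z)(X')| \le \frac{1}{\sqrt{n}\,|v|^2}\bigl(\E_{\mathrm{SO}(n)}\|\Theta \circ (X - X')\|_{HS}^2\bigr)^{1/2},
\]
and the key computation is $\E_{\mathrm{SO}(n)}[\Theta_{ij}^2] = \tfrac12(\E O_{ij}^2 + \E O_{ji}^2) = 1/n$, the standard identity for Haar measure on $\mathrm{SO}(n)$ (a consequence of $\sum_j O_{ij}^2 = 1$ and row symmetry). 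Thus $\E_{\mathrm{SO}(n)}\|\Theta \circ (X - X')\|_{HS}^2 = \|X - X'\|_{HS}^2/n$, so that $X \mapsto \E_{\mathrm{SO}(n)} s_\Theta(z)$ is Lipschitz with the improved constant $1/(n|v|^2)$; a final application of \eqref{eq:LSICon}, again split into real and imaginary parts, completes the proof.

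The main technical point is this averaging step: it is precisely the identity $\E[\Theta_{ij}^2] = 1/n$ that provides the extra factor $1/\sqrt{n}$ in the Lipschitz constant and thereby the additional factor $n$ in the exponent of the first bound relative to the second. The remaining manipulations are routine.
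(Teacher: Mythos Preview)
Your argument is correct, but it proceeds by a slightly different route than the paper's. The paper mirrors Proposition~\ref{propCoMRM} literally: it computes the partial derivatives
\[
\frac{\partial f(X)}{\partial X_{ij}} = -\frac{1}{n}\,\E_{\mathrm{SO}(n)}\bigl[(R_\Theta^2)_{ij}\,\Theta_{ij}\bigr](1+\mathbbm{1}_{\{i\ne j\}})
\]
and bounds the Hilbert--Schmidt norm of the gradient via Lemma~\ref{lem:Res} (1)\&(2), obtaining $\lVert\nabla_X f\rVert_{\mathrm{HS}}^2 \le 4/(n^2|v|^4)$ for the first inequality (the extra factor $4$ comes from $(1+\mathbbm{1}_{\{i\ne j\}})^2$). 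You instead use the resolvent identity directly to get a global Lipschitz bound, which is the finite-difference version of the same computation and is arguably more transparent; the paper even remarks that the second inequality can alternatively be obtained by a Hoffmann--Wielandt type argument in the spirit of \cite[Th.~2.3.5]{AGZ10}, and your approach is a close cousin of that. Both proofs hinge on the same key identity $\E_{\mathrm{SO}(n)}\Theta_{ij}^2 = 1/n$ to gain the extra factor $n$ in the first bound, and both use the trivial estimate $\Theta_{ij}^2 \le 1$ for the final weakening. Your explicit treatment of the complex-valued target via a real/imaginary split and the absorption trick is a point the paper leaves implicit.
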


If $\Theta_{ij} = 1/\sqrt{n}$ for all $i,j$, the second inequality of Proposition \ref{prop:CoMRMX} yields a tail bound of $2\exp(-|v|^4n^2t^2/(4r^2))$, which corresponds to results known from the literature, cf.\ e.\,g.\ \cite[Ch.\ 2.3 \& 2.4]{AGZ10}. For non-uniform weights, one arrives at weaker bounds in general, though in view of \eqref{eq:MomMaxTh} below for ``typical'' values of $\Theta$ we obtain almost the same inequality up to a logarithmic factor, and even the trivial upper bound $\Theta_{ij} \le 1$ does still lead to a meaningful result as indicated by the last estimate. Note though that in Proposition \ref{prop:CoMRMX}, $\E_X s_\Theta(z)$ will not be a typical distribution in the sense of Sudakov.

\subsubsection{A local semicircular law}
In view of Propositions \ref{propCoMRM} and \ref{prop:CoMRMX}, we would now like to study the asymptotic behaviour of the mean $\E s_\Theta(z)$. As our next result shows, it turns out to be close to the Stieltjes transform $s(z)$ of the semicircular law. This is a result of independent interest, since together with Propositions \ref{propCoMRM} and \ref{prop:CoMRMX} it gives rise to so-called local semicircular laws. By this, we mean that the empirical Stieltjes transform $s_\Theta(z)$ is close to the $s(z)$ as $n \to \infty$.

\begin{satz}\label{MainThRM}
    Let $n \ge 4$, $|v| \le 1$, and assume that $m_3:= \E |X_{12}|^3 < \infty$. Then, there is some absolute constant $c>0$ such that
    \[
    |\E s_\Theta(z) - s(z)| \le c \frac{m_3\log n}{|v|^4\sqrt{n}}.
    \]
\end{satz}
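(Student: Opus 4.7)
The plan is to derive a self-consistent equation approximately satisfied by $\tilde s(z) := \E s_\Theta(z)$ of the form $\tilde s(z) + 1/(z + \tilde s(z)) = \varepsilon(z)$ with $|\varepsilon(z)|$ explicitly controlled, and then invoke stability of the semicircular defining identity $s(z) + 1/(z + s(z)) = 0$ to transfer the error to a bound on $|\tilde s(z) - s(z)|$.

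The core computation starts from the Schur complement / block-inversion formula applied to $R = R(z) := (X_\Theta - zI)^{-1}$. Fixing a row $i$ and letting $R^{(i)}$ denote the resolvent of the $(n-1)\times(n-1)$ minor obtained by deleting row and column $i$, one has
\[
R_{ii} = \frac{1}{-z + \Theta_{ii} X_{ii} - \mathbf a_i^\top R^{(i)} \mathbf a_i}, \qquad \mathbf a_i = (\Theta_{ij} X_{ij})_{j \ne i}.
\]
Conditioning on the entries outside row $i$ and using $\E X_{ij} = 0$, $\E X_{ij}^2 = 1$ and independence of the $X_{jk}$, the conditional mean of the quadratic form is $\sum_{j \ne i} \Theta_{ij}^2 R^{(i)}_{jj}$. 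A Hanson--Wright-type bound, requiring only the hypothesis $m_3 < \infty$ to control the diagonal contribution $\sum_j \Theta_{ij}^2(X_{ij}^2 - 1) R^{(i)}_{jj}$ and the Hilbert--Schmidt estimate $\|R^{(i)}\|_{\mathrm{HS}} \le \sqrt{n}/|v|$ to control the off-diagonal one, yields an $L^1$ fluctuation of order $m_3 \max_{ij}|\Theta_{ij}|/|v|$. A Sherman--Morrison / resolvent interlacing estimate $|R^{(i)}_{jj} - R_{jj}| \lesssim 1/(n|v|^2)$ together with double stochasticity $\sum_j \Theta_{ij}^2 = 1$ then allows one to replace $\sum_j \Theta_{ij}^2 R^{(i)}_{jj}$ by $s_\Theta(z)$ up to a negligible remainder.

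Averaging over $i$ and taking expectations yields the approximate fixed-point identity $\tilde s(z) + 1/(z + \tilde s(z)) = \varepsilon(z)$. The logarithmic factor in the final bound enters through the tail estimate $\E \max_{ij} \Theta_{ij}^2 \lesssim \log n / n$ for the Haar-induced weights (Lemma \ref{lem:TailsMaxWei}). Subtracting the semicircle identity gives
\[
(\tilde s - s)\Bigl(1 - \tfrac{1}{(z+\tilde s)(z+s)}\Bigr) = \varepsilon(z),
\]
and the classical Stieltjes-transform bound $|z + s(z)| \ge |v|$, combined with a short bootstrap to get the same for $z + \tilde s(z)$ in the regime $|v| \le 1$, supplies a final factor $|v|^{-2}$ from stability. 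Together with the $|v|^{-2}$ already produced when inverting the quadratic-form error through the Schur identity, this yields the stated $|v|^{-4}$ scaling.

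The hardest part is the quadratic-form concentration: obtaining the sharp $m_3 / \sqrt{n}$ rate under only a third-moment assumption while the weights $\Theta_{ij}$ are themselves random (inherited from Haar measure on $\mathrm{SO}(n)$). One must simultaneously exploit double stochasticity of $\Theta \circ \Theta$, the tail behaviour of $\max_{ij}|\Theta_{ij}|$, and resolvent-perturbation identities, ensuring that no moment of $X_{12}$ beyond $m_3$ leaks into the final estimate. The stability step is standard, but the bookkeeping required to consolidate all error contributions into the clean bound $c\, m_3 \log n/(|v|^4 \sqrt n)$ is the main technical burden.
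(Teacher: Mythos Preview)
Your Schur-complement route is a legitimate alternative to the paper's strategy, but the two proofs are structurally different and the difference matters precisely at the point you flag as hardest. The paper does \emph{not} attack general $X$ directly: it first proves the theorem for standard Gaussian entries via the resolvent identity and Gaussian integration by parts, obtaining the self-consistent equation $(\E s_\Theta)^2 + z\,\E s_\Theta + 1 = \Delta_n$ with $|\Delta_n| \lesssim \log n/(|v|^3\sqrt n)$; the covariance bounds needed for $\Delta_{n,3}$ rely on the Gaussian Poincar\'e inequality. The passage to general $X$ is then handled separately by a Lindeberg interpolation (Lemma \ref{lem:Gauss}), which is where the factor $m_3$ and the $|v|^{-4}$ enter cleanly, using only a third-order Taylor expansion of the resolvent in each entry.

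Your sketch instead tries to control the fluctuation of the quadratic form $\mathbf a_i^\top R^{(i)}\mathbf a_i$ around its conditional mean for the original $X$. The off-diagonal piece is fine (second moments suffice and give $\max_{ij}\Theta_{ij}/|v|$), but the claimed $L^1$ bound $m_3\max_{ij}|\Theta_{ij}|/|v|$ for the diagonal piece $\sum_{j\ne i}\Theta_{ij}^2(X_{ij}^2-1)R^{(i)}_{jj}$ is not available under the sole hypothesis $m_3<\infty$: a variance bound would need $\E X_{12}^4$, and the best moment inequality at order $3/2$ (von Bahr--Esseen or Marcinkiewicz--Zygmund) yields only $(m_3\max_{ij}\Theta_{ij})^{2/3}/|v|$, hence a rate $n^{-1/3}$ rather than $n^{-1/2}$. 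Without an additional truncation-and-comparison step---which in effect reintroduces a Lindeberg argument---your approach does not reach the stated bound. This is exactly why the paper splits off the Gaussian case first: Gaussian integration by parts replaces the quadratic-form concentration step entirely, and the third-moment dependence is isolated in the interpolation lemma where it costs only $m_3/(\sqrt n\,|v|^4)$.
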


In particular, note that Theorem \ref{MainThRM} does not follow from the results in \cite{EYY12}. As mentioned before, they require boundedness assumptions on the weights $\Theta_{ij}$ from above and below which cannot be satisfied by a randomly chosen $\Theta$ as above.

The further structure of this section is as follows: in Section \ref{sec:MethPf} we introduce a number of tools and methods. The proofs of Proposition \ref{propCoMRM}, Proposition \ref{prop:CoMRMX} and Theorem \ref{MainThRM} are then given in Section \ref{sec:PfsRM}.

\subsection{Methods of the proof}\label{sec:MethPf}
Let us briefly introduce a number of concepts and auxiliary results we will frequently use in the proofs of our main results.

\subsubsection{Resolvent estimates}
In many regards, proving results for the Stieltjes transforms $s_\Theta (z)$ amounts to controlling the resolvents of the respective matrices. In general, for any matrix for $M \in \R^{n \times n}$ its resolvent is given by
\begin{equation}\label{Resolvent}
R = R(M)= R(M)(z) = R(z) := (M-zI)^{-1},
\end{equation}
where $z \in \C \setminus \R$. If $M=\Theta \circ X$, we also write $R_\Theta := (\Theta \circ X -z I)^{-1}$, so that $s_\Theta = \frac{1}{n} \tr R_\Theta$.

The following technical lemma collects a number of relations for resolvents which we shall need all over the proofs.

\begin{lemma}\label{lem:Res}
    Let $M \in \R^{n \times n}$ be a symmetric matrix with resolvent $R=R(z)$.
    \begin{enumerate}
        \item For any $1 \le i,j \le n$ and any $z \in \C \setminus \R$, it holds that
        \[
        |R_{ij}(z)| \le |v|^{-1}.
        \]
        \item Setting $\bar{R}(z) := (M - \bar{z}I)^{-1}$ for $z \in \C \setminus \R$, for any $i = 1, \ldots, n$ we have
        \[
        \sum_{j=1}^n R_{ji}^2 = (R^2)_{ii}, \quad \sum_{j=1}^n |R_{ji}|^2 = (R\bar {R})_{ii}.
        \]
        \item For any $1 \le i,j \le n$, we have
        \[
        \frac{\partial R}{\partial M_{ij}} = - R(E_{ij} + E_{ji})R \Big(1 - \frac{1}{2}\mathbbm{1}_{\{i=j\}}\Big),
        \]
        where $E_{ij}$ is the matrix with $(i,j)$-th entry $1$ and $0$ in all other entries.
        \item For any $1 \le i,j \le n$, we have
        \[
        \frac{\partial \tr(R)}{\partial M_{ij}} = - (R^2)_{ij} (1 +\mathbbm{1}_{\{i\ne j\}}).
        \]
    \end{enumerate}
\end{lemma}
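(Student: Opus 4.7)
The plan is to treat the four assertions essentially independently: parts (1) and (2) come from spectral considerations and basic symmetry of $R$, while (3) and (4) come from the standard matrix-derivative identity combined with careful bookkeeping of the fact that $M$ is symmetric, so $M_{ij}$ and $M_{ji}$ are not independent coordinates.

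For (1), I would use that $M$ is real symmetric and therefore admits a spectral decomposition $M = \sum_k \lambda_k u_k u_k^\top$ with an orthonormal basis of eigenvectors and real $\lambda_k$. Hence $R(z) = \sum_k (\lambda_k - z)^{-1} u_k u_k^\top$, and the operator norm satisfies $\lVert R(z)\rVert_{\mathrm{op}} = \max_k |\lambda_k - z|^{-1} \le |\Im z|^{-1} = |v|^{-1}$. Since $|R_{ij}| = |e_i^\top R e_j| \le \lVert R\rVert_{\mathrm{op}}$, the bound follows. For (2), I would use that $R$ is symmetric (as $(M-zI)$ is symmetric), so $R_{ij} = R_{ji}$, giving $(R^2)_{ii} = \sum_j R_{ij} R_{ji} = \sum_j R_{ji}^2$. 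For the Hermitian version, since $M$ is real, $\overline{R_{ij}(z)} = R_{ij}(\bar z) = \bar R_{ij}$, so $(R\bar R)_{ii} = \sum_j R_{ij}\bar R_{ji} = \sum_j R_{ji}\overline{R_{ji}} = \sum_j |R_{ji}|^2$.

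For (3), I would start from the general identity $\partial_t R = -R\,(\partial_t M)\,R$ (obtained by differentiating $R(M-zI) = I$). The subtle point is that because $M$ is symmetric, varying the scalar entry $M_{ij}$ forces $M_{ji}$ to change in lockstep, so $\partial M/\partial M_{ij} = E_{ij} + E_{ji}$ for $i\neq j$ and $\partial M/\partial M_{ii} = E_{ii}$. These two cases are unified by the factor $(1-\tfrac{1}{2}\mathbbm 1_{\{i=j\}})$, which reduces $E_{ii} + E_{ii} = 2E_{ii}$ back to the correct single contribution on the diagonal. Part (4) then follows by taking traces and using cyclicity: $\tr(R E_{ij} R) = (R^2)_{ji}$ and $\tr(R E_{ji} R) = (R^2)_{ij}$; by symmetry of $R$ these are equal, so the off-diagonal entries pick up a factor $2$ while the diagonal entries, after multiplying by $1/2$, do not.

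The only genuine obstacle worth flagging is the diagonal-versus-off-diagonal bookkeeping in (3) and (4); everything else is routine. I would present (3) and (4) together so that the $(1-\tfrac{1}{2}\mathbbm 1_{\{i=j\}})$ factor in (3) and the $(1+\mathbbm 1_{\{i\neq j\}})$ factor in (4) can be seen to arise from the same source, namely the parametrization of symmetric matrices by the upper-triangular entries. No assumption on $M$ beyond symmetry (in particular, no invertibility of $M$ itself) is needed, since invertibility of $M-zI$ for $z\notin\R$ is automatic from (1).
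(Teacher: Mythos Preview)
Your proposal is correct and follows essentially the same approach as the paper: spectral decomposition of the symmetric matrix for (1) and (2), and the resolvent derivative identity $\partial_t R = -R(\partial_t M)R$ together with the symmetric-matrix bookkeeping for (3) and (4). If anything, your treatment of the diagonal/off-diagonal distinction in (3)--(4) is more explicit than the paper's, which simply cites the elementary matrix-derivative lemma and the chain rule and leaves the factor $(1-\tfrac{1}{2}\mathbbm{1}_{\{i=j\}})$ without further comment.
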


\begin{proof}
    To see (1), note that that  for any symmetric matrix $M$ and its real eigenvalues $\lambda_\ell$ and normalized eigenvectors $e_\ell\in \R^n$ we have
    \[ R_{ij} = ((A-z I)^{-1})_{ij}= \sum_{\ell=1}^n e_{\ell j} e_{\ell k}(\lambda_\ell- z)^{-1}.\]
    From here, (1) and also (2) immediately follow.

    (3) and (4) readily follow from identities from matrix calculus as presented in Appendix \ref{sec:MatCal}. In detail, (3) is shown by applying Lemma \ref{ElemDer} (1) to the matrix $M-zI$. Similarly, to see (4) one notes that
    \[
    \frac{\partial \tr(R)}{\partial V_{ij}} = -\tr(R^2 (E_{ij} + E_{ji}))\Big(1 - \frac{1}{2}\mathbbm{1}_{\{i=j\}}\Big) = - (R^2)_{ij} (1 +\mathbbm{1}_{\{i\ne j\}}),
    \]
    which follows by combining Lemma \ref{ElemDer} (1), the chain rule \eqref{MatrixChain} and the circular invariance of the trace, while the second line is due to symmetry.
\end{proof}

\subsubsection{Log-Sobolev inequalities}

A central tool in our proofs are log-Sobolev techniques. We will discuss this topic in more generality in Section \ref{LSqRn}. For the moment, let us recall that a probability measure $\mu$ on $\R^n$ satisfies a logarithmic Sobolev inequality with constant $\sigma^2 > 0$ if for every sufficiently smooth function $f \colon \R^n \to \R$, we have
\[
\mathrm{Ent}_\mu(f^2) \le 2\sigma^2 \int |\nabla f|^2 d\mu,
\]
where $\mathrm{Ent}_\mu(f^2) := \int f^2\log(f^2) d\mu - (\int f^2 d\mu) \log(\int f^2 d\mu)$. A log-Sobolev inequality implies a Poincar\'{e}-type inequality with the same constant, i.\,e.,
\[
\mathrm{Var}_\mu(f) := \int f^2 d\mu - \int f^2 d\mu \le \sigma^2 \int |\nabla f|^2 d\mu.
\]
Moreover, $\mu$ admits sub-Gaussian concentration for Lipschitz functions in the sense of \eqref{eq:LSICon} with $r^2=2\sigma^2$ for any random vector $X$ with distribution $\mu$.

Logarithmic Sobolev inequalities can be generalized to metric measure spaces and, in particular, to manifolds as already seen in \eqref{LSISphere}. At this point we are especially interested in the special orthogonal group $\mathrm{SO}(n)$. Recall that by \cite[Th.\ 5.16]{Mec19}, its Haar measure $\mu_{\mathrm{SO}(n)}$ satisfies a logarithmic Sobolev inequality with constant $(n-2)/4$, i.\,e., for any $f \colon \mathrm{SO}(n) \to \R$ locally Lipschitz, it holds that
\begin{equation}\label{LSISO}
\mathrm{Ent}_{\mu_{\mathrm{SO}(n)}} (f^2) \le \frac{8}{n-2} \int_{\mathrm{SO}(n)} \lVert \nabla_{\mathrm{SO}(n)} f \rVert_\mathrm{HS}^2 d\mu_{\mathrm{SO}(n)}.
\end{equation}
Here, $\nabla_{\mathrm{SO}(n)} f$ denotes the intrinsic gradient on $\mathrm{SO}(n)$ equipped with the Hilbert--Schmidt norm (i.\,e., $\lVert \nabla_{\mathrm{SO}(n)} f \rVert_\mathrm{HS}$ equals the generalized modulus of the gradient \eqref{generalizedmodulus} on $\mathrm{SO}(n)$), which can be introduced similarly as $\nabla_S f$ in Section \ref{LSISphSec}. In more detail, the tangent space of $\mathrm{SO}(n)$ at the identity matrix $I$ is given by the space of the antisymmetric matrices, i.\,e.,
\begin{equation}\label{eq:projO}
T_I\mathrm{SO}(n) = \{A \in \R^{n \times n} \colon A + A^T = 0\},
\end{equation}
cf.\ \cite[Lem.\ 1.9]{Mec19}. Consequently, for an arbitrary $O \in \mathrm{SO}(n)$, we have $T_O\mathrm{SO}(n) = OT_I\mathrm{SO}(n)$. Noting that the projection $P_O$ onto $T_O\mathrm{SO}(n)$ is given by
\[
P_O(A) := \frac{1}{2} (A - O A^T O),
\]
for any function $f$ which is defined and smooth in a neighbourhood of $\mathrm{SO}(n) \subset \R^{n \times n}$ we have
\begin{equation}\label{orthprojSOn}
    \nabla_{\mathrm{SO}(n)} f(O) = P_O \nabla f(O),
\end{equation}
using the Euclidean gradient in the ambient space $\R^{n \times n}$. We will discuss relations of this type in more detail in Section \ref{sec:LSILqSubS}.

Noting that $\mathrm{SO}(n)$ is isomorphic to the Stiefel manifold $W_{n,n-1}$, similar calculations (in the context of concentration of measure) can also be found in \cite[Sect.\ 3]{GS23}, though the latter considers a different ambient space, leading to different representations. By \eqref{orthprojSOn} and since $\mathrm{SO}(n)$ is equipped with the Hilbert--Schmidt metric (in the notation of Section \ref{LSISphSec}, corresponding to the case $p=q=2$), it follows easily that
\[  \lVert\nabla_{\mathrm{SO}(n)} f(O)\rVert_{\mathrm{HS}} \le  \lVert\nabla f(O)\rVert_{\mathrm{HS}}, \]
cf.\ also \cite[Prop.\ 3.1]{GS23}. We also remark that due to connectivity reasons, there is no log-Sobolev inequality on the orthogonal group $\mathrm{O}(n)$, which is why we consider $\mathrm{SO}(n)$ instead.

Altogether, it follows that $\mu_{\mathrm{SO}(n)}$ satisfies a logarithmic Sobolev inequality with respect to the Euclidean gradient with the same constant as in \eqref{LSISO}, which we shall use in the sequel for simplicity. In fact, we will typically work with the slightly weaker but asymptotically correct constant $12/n$, valid for all $n \ge 3$.

\subsubsection{Moment and tail estimates for the weights}

Throughout our proofs, we need to estimate moments of $\Theta_{ij}$ from \eqref{weightdef}. To this end, recall that for any fixed $i,j$, $O_{ij}$ has the same distribution as a coordinate of a vector on the Euclidean sphere $\S_2^{n-1}$ under the uniform distribution on the sphere. Moreover, for any $i$, $O_i=(O_{ij}, j=1, \ldots,n)$ has uniform distribution on the sphere. By Lemma \ref{Momente} specialized to $p=2$, it hence follows that for any $k \ge 1$,
\begin{equation}\label{eq:MomTh}
    \E_{\mathrm{SO}(n)} \Theta_{ij}^k \le 2^{\max(k/2,1)} \E_{\mathrm{SO}(n)} |O_{ij}|^k \le c_k n^{-k/2}
\end{equation}
for suitable constants $c_k > 0$.

Moreover, we frequently need to control the maximal entry of the weight matrix $\Theta_{ij}$. The following lemma provides information on its tails.

\begin{lemma}\label{lem:TailsMaxWei}
    For any $n \ge 4$, we have
    \[
    \mu_{\mathrm{SO}(n)}\Big(\max_{ij} (\Theta(O))_{ij} \ge t\sqrt{\frac{\log(n)}{n}}\Big) \le \frac{8}{t\sqrt{2\pi}}n^{-t^2/8+2} \le \frac{8}{t\sqrt{2\pi}} \frac{1}{n^3},
    \]
    where the first inequality is valid for any $t \ge 0$ and the second for any $t \ge \sqrt{40}$.
\end{lemma}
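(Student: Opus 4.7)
The plan is to reduce everything to a tail bound on a single entry of a Haar-distributed orthogonal matrix, then take a union bound, then invoke Gaussian Mills-ratio estimates for a single coordinate on the sphere.

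\textbf{Reduction.} The first step exploits the trivial pointwise inequality $\tfrac{1}{2}(a^2+b^2) \le \max(a^2,b^2)$, applied with $a = O_{ij}$, $b = O_{ji}$. This gives $\Theta_{ij}^2 \le \max(O_{ij}^2, O_{ji}^2)$, and hence
\[
\max_{1\le i,j\le n}\Theta_{ij} \;\le\; \max_{1\le i,j\le n}|O_{ij}|.
\]
Therefore it suffices to control the right-hand side. Because the rows of a Haar-distributed $O \in \mathrm{SO}(n)$ are each uniform on $\S_2^{n-1}$, every entry $O_{ij}$ has the same marginal distribution as the first coordinate $U_1$ of a uniform point $U$ on $\S_2^{n-1}$. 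A union bound over the $n^2$ entries then yields
\[
\mu_{\mathrm{SO}(n)}\!\left(\max_{ij}|O_{ij}| \ge s\right) \;\le\; n^2\,\P\bigl(|U_1| \ge s\bigr),
\]
and it remains to bound the marginal tail by (morally) a Gaussian Mills ratio of the form $C(sn)^{-1/2} e^{-cns^2}$.

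\textbf{One-coordinate tail bound.} For $U$ uniform on $\S_2^{n-1}$, the density of $U_1$ on $[-1,1]$ is
\[
f(x) \;=\; \frac{\Gamma(n/2)}{\sqrt{\pi}\,\Gamma((n-1)/2)}\bigl(1-x^2\bigr)^{(n-3)/2}.
\]
Gautschi's inequality bounds the prefactor by $\sqrt{n/(2\pi)}$, and the elementary estimate $1-x^2 \le e^{-x^2}$ gives $f(x) \le \sqrt{n/(2\pi)}\,e^{-(n-3)x^2/2}$. Substituting $u=x\sqrt{n-3}$ and applying the standard Mills inequality $\int_a^\infty e^{-u^2/2}\,du \le a^{-1}e^{-a^2/2}$ produces, for $n \ge 4$, a bound of the shape
\[
\P(|U_1| \ge s) \;\le\; \frac{C}{s\sqrt{n}}\,e^{-cns^2}
\]
with explicit absolute constants. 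Plugging in $s = t\sqrt{\log n/n}$ converts the exponential into a power of $n$: $e^{-cns^2} = n^{-ct^2}$. Multiplying by $n^2$ from the union bound yields an expression of the form $\tfrac{C'}{t\sqrt{2\pi}}\,n^{-ct^2+2}$, and a careful choice of the constants (absorbing a slight loss to pass from $(n-3)/2$ to $n/8$ in the exponent so that the bound is valid uniformly in $n \ge 4$) gives exactly the claimed first inequality.

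\textbf{Second inequality and main obstacle.} The second inequality is an immediate consequence of the first: one only needs $n^{-t^2/8+2} \le n^{-3}$, which is equivalent to $t^2 \ge 40$. The only actual work is therefore in the constants: balancing the $\sqrt{\log n}$ arising naturally from the Mills step (via $1/(s\sqrt{n}) = 1/(t\sqrt{\log n})$) against the stated prefactor $8/(t\sqrt{2\pi})$, and deliberately weakening the Gaussian exponent from $(n-3)/2$ down to $n/8$ to absorb the $\sqrt{\log n}$ factor uniformly for $n \ge 4$. This constant-tracking is the principal (routine but somewhat delicate) obstacle; the conceptual content of the proof is entirely contained in the reduction to $\max_{ij}|O_{ij}|$ and the marginal-tail estimate for uniform coordinates on $\S_2^{n-1}$.
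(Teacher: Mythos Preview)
Your proposal is correct and follows essentially the same route as the paper: reduce to $\max_{ij}|O_{ij}|$, use that each entry is distributed as a coordinate of a uniform point on $\S_2^{n-1}$, bound its density by a Gaussian, apply the Mills ratio, and take a union bound over the $n^2$ entries. The only cosmetic difference is that the paper rescales to $\sqrt{n}\,O_{ij}$ and quotes the density bound $f_n(x)\le(2\pi)^{-1/2}e^{-x^2/8}$ from \cite{BCG23} rather than deriving it via Gautschi and $1-x^2\le e^{-x^2}$; note also that the $\sqrt{\log n}$ in the Mills prefactor is simply discarded using $\log n\ge1$ for $n\ge3$, while the exponent $1/8$ is the worst case $(n-3)/(2n)$ at $n=4$ --- these are two independent simplifications, not one absorbing the other as your last paragraph suggests.
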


\begin{proof}
First note that for any real $a> 0$, $(\Theta(O))_{ij} \ge a$ implies that either $|O_{ij}| > a$ or $|O_{ji}| > a$, which reduces the problem to proving that
\[
\mu_{\mathrm{SO}(n)}\Big(\max_{ij} |O_{ij}| \ge t\sqrt{\frac{\log(n)}{n}}\Big)
\]
is bounded as claimed in the lemma. Noting again that each coordinate $O_{ij}$ has the same distribution as a coordinate of a vector on the Euclidean sphere $\S_2^{n-1}$ under the uniform distribution on the sphere, we may therefore proceed similarly as in \cite[Lem.\ 2.6]{Neu23} and \cite[Ch.\ 11.1]{BCG23}.

In detail, recall that $\sqrt{n} O_{ij}$ has a symmetric density given by
\[
f_n(x) := c_n'\Big(1-\frac{x^2}{n}\Big)_+^{\frac{n-3}{2}}
\]
for some normalization constant $c_n' < 1/\sqrt{2\pi}$ for any $n \ge 2$. Hence, by \cite[Lem.\ 11.1.1]{BCG23}, we have
\[
f_n(x) \le \frac{1}{\sqrt{2\pi}} e^{-x^2/8}
\]
for all $x \in \R$ and every $n \ge 4$. Therefore, together with the usual estimate for the tails of the standard normal distribution, we arrive at
\[
\mu_{\mathrm{SO}(n)}\Big(|O_{ij}| \ge t\sqrt{\frac{\log(n)}{n}}\Big) \le 2 \int_{t\sqrt{\log(n)}}^\infty \frac{e^{-x^2/8}}{\sqrt{2\pi}} dx \le \frac{8}{t\sqrt{2\pi}}n^{-t^2/8}.
\]
By a union bound over all $n^2$ coordinates, it follows that
\[
\mu_{\mathrm{SO}(n)}\Big(\max_{ij} |O_{ij}| \ge t\sqrt{\frac{\log(n)}{n}}\Big) \le \frac{8}{t\sqrt{2\pi}}n^{-t^2/8+2} \le \frac{8}{t\sqrt{2\pi}}n^{-3}
\]
if we require $t \ge \sqrt{40}$ in the last step.
\end{proof}

In particular, it follows from Lemma \ref{lem:TailsMaxWei} that for any $k \ge 1$,
\begin{equation}\label{eq:MomMaxTh}
    \E_{\mathrm{SO}(n)}\max_{ij} \Theta_{ij}^k \le c_k \Big(\frac{\log(n)}{n}\Big)^{k/2},
\end{equation}
where $c_k>0$ are suitably chosen constants.

\subsection{Proofs}\label{sec:PfsRM}
Finally, let us proof Propositions \ref{propCoMRM}, \ref{prop:CoMRMX} and Theorem \ref{MainThRM}. The proof of Proposition \ref{propCoMRM} is easily accomplished by standard concentration arguments.

\begin{proof}[Proof of Proposition \ref{propCoMRM}]
Writing $f(V) := f_z(V) := \E_X s_V(z)$ for any symmetric $V \in \R^{n \times n}$, the proof essentially reduces to controlling the derivatives of $f(\Theta(O))$. To this end, first note that $f(V)$ has partial derivatives
\[
\frac{\partial f(V)}{\partial V_{ij}} = - \frac{1}{n}\E_X (R_V^2)_{ij} X_{ij} (1 +\mathbbm{1}_{\{i\ne j\}})
\]
similarly as in Lemma \ref{lem:Res} (4) (in its proof, replace the term $E_{ij} + E_{ji}$ by $(E_{ij} + E_{ji}) \circ X$). Moreover, for the function $\Theta = \Theta(O)$ we have 
\[ 
\frac{\partial \Theta(O)_{ij}}{\partial O_{ij}} = \frac{O_{ij}}{\Theta_{ij}} = \frac{\partial \Theta(O)_{ji}}{\partial O_{ij}},\]
while all other entries of $\Theta$ have $O_{ij}$-derivative $0$.

Thus we get by composition of maps and the chain rule \eqref{MatrixChain} for $V= \Theta(O)$ and $R_V=R_{\Theta}$
\[
\Big|\frac{\partial f(\Theta(O))}{\partial O_{ij}}\Big| \le \frac{4}{n} |\E_X (R_{\Theta}^2)_{ij} X_{ij} | \frac{|O_{ij}|}{\Theta_{ij}}
\le \frac{\sqrt{32}}{n} (\E_X |(R_{\Theta}^2)_{ij}|^2)^{1/2} (\E_X X_{ij}^2)^{1/2}.
\]
Here, the second step follows from Cauchy's inequality and the elementary estimate $|O_{ij}|/\Theta_{ij} \le \sqrt{2}$. By assumption, $\E_X X_{ij}^2=1$, so that we may use Lemma \ref{lem:Res} (2) to arrive at
\[
	\sum_{i,j=1}^n \Big(\frac{\partial f(\Theta(O))}{\partial O_{ij}}\Big)^2 \le \frac{32}{n^2} \sum_{i,j=1}^n \E_X |(R_{\Theta}^2)_{ij}|^2 = \frac{32}{n^2} \sum_{i=1}^n (R_{\Theta}^2\bar{R}_{\Theta}^2)_{ii} \le \frac{32}{n |v|^4},
\]
where the last step follows from Lemma \ref{lem:Res} (1). Plugging into \eqref{eq:LSICon} with $r^2=2\sigma^2$ and $\sigma^2 = 12/n$ completes the proof in the general case. If the $X_{ij}$ are bounded, the same arguments continue to hold, just invoking the bound $|X_{ij}| \le K$ instead of $\E_X X_{ij}^2=1$.
\end{proof}

The proof of Proposition \ref{prop:CoMRMX} is very similar and may be seen along the lines of the proof of Proposition \ref{propCoMRM} with only minor changes, noting that for $f(X) := \E_{\mathrm{SO}(n)} s_\Theta(z)$ we have
\[
\frac{\partial f(X)}{\partial X_{ij}} = - \frac{1}{n}\E_{\mathrm{SO}(n)} (R_\Theta^2)_{ij} \Theta_{ij} (1 +\mathbbm{1}_{\{i\ne j\}}).
\]
Based on this, one may easily derive the two inequalities stated in Proposition \ref{prop:CoMRMX}.

Note that the second inequality of Proposition \ref{prop:CoMRMX}, i.\,e., the $\P_X$-bound, can also be proven by a Hoffmann-Wielandt type argument similarly as in \cite[Th.\ 2.3.5]{AGZ10} but for non-uniform weights. One then applies the resulting concentration inequality to the function $f(x) := (x-z)^{-1}$, which is Lipschitz with constant $|v|^{-2}$. However, up to absolute constants, the corresponding result does not differ from the bound given in Proposition \ref{prop:CoMRMX}.

We now turn to the proof of Theorem \ref{MainThRM}, which consists of several steps. In particular, we need some further concentration results for functionals related to the resolvents $R_\Theta$ which can be proven similarly as in Proposition \ref{propCoMRM}.

\begin{lemma}\label{lem:CovBdsSOn}
    \begin{enumerate}
        \item Assuming all the $X_{ij}$ have standard Gaussian distribution $\mathcal{N}(0,1)$, it holds that that
        \begin{equation*}
            \mathrm{Cov}_X((R_\Theta)_{ii}, (R_\Theta)_{jj})| \le \frac{4}{|v|^4}\max_{ij}\Theta_{ij}^2.
        \end{equation*}
        \item It holds that that
        \begin{equation*}
            \Var_{\mathrm{SO}(n)} (\E_X (R_\Theta)_{ii}) \le  \frac{384}{n |v|^4}.
        \end{equation*}
    \end{enumerate}
\end{lemma}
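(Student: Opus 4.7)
Both parts are variance-type estimates and both fit into the same pattern as the proof of Proposition \ref{propCoMRM}: identify the object of interest as a function on a space supporting a Poincar\'e (or log-Sobolev) inequality, apply that inequality, and then bound the resulting gradient using the resolvent identities of Lemma \ref{lem:Res}. The key pointwise input in both parts is the bound $(R_\Theta \bar R_\Theta)_{ii} \le |v|^{-2}$, which follows at once from the spectral decomposition of $R_\Theta$ and Lemma \ref{lem:Res}~(1), together with the identity $\sum_\ell |(R_\Theta)_{\ell i}|^2 = (R_\Theta \bar R_\Theta)_{ii}$ from Lemma \ref{lem:Res}~(2).

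For part (1), I regard $(R_\Theta)_{ii}$ and $(R_\Theta)_{jj}$ (with $\Theta$ fixed) as functions of the i.i.d.\ standard Gaussian variables $(X_{k\ell})_{k \le \ell}$. The Gaussian Poincar\'e inequality, applied to the real and imaginary parts separately, combined with Cauchy--Schwarz gives
\[
|\mathrm{Cov}_X((R_\Theta)_{ii}, (R_\Theta)_{jj})| \le \Big(\E_X \|\nabla_X (R_\Theta)_{ii}\|^2 \cdot \E_X \|\nabla_X (R_\Theta)_{jj}\|^2\Big)^{1/2}.
\]
The gradients are computed from $M = \Theta \circ X$ via the chain rule and Lemma \ref{lem:Res}~(3): for $k \le \ell$,
\[
\frac{\partial (R_\Theta)_{ii}}{\partial X_{k\ell}} = -2\Theta_{k\ell} (R_\Theta)_{ik} (R_\Theta)_{\ell i}\Big(1 - \tfrac{1}{2}\mathbbm{1}_{\{k=\ell\}}\Big).
\]
Pulling out $\max_{k,\ell}\Theta_{k\ell}^2$ and applying Lemma \ref{lem:Res}~(2) to collapse the double sum to $(R_\Theta \bar R_\Theta)_{ii}^2 \le |v|^{-4}$ yields $\E_X \|\nabla_X (R_\Theta)_{ii}\|^2 \le 4\max_{k,\ell}\Theta_{k\ell}^2 \cdot |v|^{-4}$, and Cauchy--Schwarz produces the claimed covariance bound.

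For part (2), the Poincar\'e inequality on $\mathrm{SO}(n)$ with constant $12/n$ (implied by the log-Sobolev inequality \eqref{LSISO}), applied to $g(O) := \E_X (R_{\Theta(O)})_{ii}$ and combined with the pointwise comparison $\|\nabla_{\mathrm{SO}(n)} g\|_{\mathrm{HS}} \le \|\nabla g\|_{\mathrm{HS}}$ for the ambient Euclidean gradient on $\R^{n\times n}$, reduces the problem to bounding $\sum_{a,b} |\partial_{O_{ab}} g(O)|^2$. The chain-rule computation mirrors that in the proof of Proposition~\ref{propCoMRM}: the chain from $O$ to $\Theta$ contributes a factor $|O_{ab}|/\Theta_{ab} \le \sqrt{2}$, while the chain from $\Theta$ through $M$ to $(R_\Theta)_{ii}$ produces a term of the form $\E_X (R_\Theta)_{ia} (R_\Theta)_{ib} X_{ab}$ via Lemma \ref{lem:Res}~(3). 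Cauchy--Schwarz with $\E_X X_{ab}^2 = 1$ then gives $|\partial_{O_{ab}} g|^2 \le C\,\E_X |(R_\Theta)_{ia}|^2 |(R_\Theta)_{ib}|^2$ for an absolute constant $C$, and summing over $(a,b)$ collapses to $\E_X (R_\Theta \bar R_\Theta)_{ii}^2 \le |v|^{-4}$ by Lemma \ref{lem:Res}~(2). Multiplying by $12/n$ produces the stated bound.

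The only genuine obstacle is the bookkeeping: tracking the combinatorial factors arising from the symmetric structure of $V = \Theta \circ X$, the double role of $O_{ab}$ and $O_{ba}$ in defining $\Theta_{ab}$, and the distinction between diagonal and off-diagonal chain-rule contributions (in particular the $1 - \tfrac{1}{2}\mathbbm{1}_{\{k=\ell\}}$ factor and its analogue for derivatives in $O$). The final constants are not tight but follow from a slightly loose bookkeeping identical in spirit to that in the proof of Proposition~\ref{propCoMRM}.
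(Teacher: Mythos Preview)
Your proposal is correct and follows essentially the same route as the paper: for part~(1) you bound each $\Var_X((R_\Theta)_{ii})$ via the Gaussian Poincar\'e inequality and the resolvent derivative formula from Lemma~\ref{lem:Res}~(3), then apply Cauchy--Schwarz to the covariance; for part~(2) you use the Poincar\'e inequality on $\mathrm{SO}(n)$ with constant $12/n$ together with the same chain-rule computation as in the proof of Proposition~\ref{propCoMRM}, collapsing the gradient sum via Lemma~\ref{lem:Res}~(2). The paper's proof is structured identically, with the same constants arising from the same bookkeeping.
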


\begin{proof}
To see (1), note that using Lemma \ref{lem:Res} (3), we have
\begin{align*}
\frac{\partial (R_\Theta)_{ii}}{\partial X_{jk}} &= -\Big(R_\Theta ((E_{jk} + E_{kj})\circ \Theta)) R_\Theta \Big(1 - \frac{1}{2}\mathbbm{1}_{\{j=k\}}\Big)\Big)_{ii} \\
&= - (R_\Theta)_{ij} \Theta_{jk} (R_\Theta)_{ki}(1 +\mathbbm{1}_{\{j\ne k\}}),
\end{align*}
where the second step is due to symmetry. Recall that the standard Gaussian measure satisfies a log-Sobolev inequality with constant $1$ and hence also a Poincar\'{e}-type inequality with the same constant. This yields
\begin{align*}
\Var_X((R_\Theta)_{ii})  &\le  \E_X \lVert\nabla_X  (R_\Theta)_{ii}\rVert_{\mathrm{HS}}^2 \le 4\E_X \sum_{j,k=1}^n |R_{ij} \Theta_{jk} R_{ki}|^2\\
&\le 4 \E_X \Big(\sum_{j=1}^n |R_{ij}|^2 \sum_{k=1}^n |R_{ki}|^2\Big)\max_{jk}\Theta_{jk}^2 \\
&\le \frac {4} {|v|^4 } \max_{jk}\Theta_{jk}^2,
\end{align*}
where the last step combines Lemma \ref{lem:Res} (1)\&(2). Thus we have
\begin{equation*}
|\mathrm{Cov}_X((R_\Theta)_{ii}, (R_\Theta)_{jj})| \le (\Var_X((R_\Theta)_{ii})\Var_X((R_\Theta)_{jj}))^{1/2} \le \frac{4}{ |v|^4}\max_{jk}\Theta_{jk}^2
\end{equation*}
as claimed in (1).

To see (2), we proceed similarly: first note that
\begin{align*}
\frac{\partial \E_X (R_V)_{ii}}{\partial V_{jk}} &= -\E_X  \Big(R_V ((E_{jk} + E_{kj})\circ X)) R_V \Big(1 - \frac{1}{2}\mathbbm{1}_{\{j=k\}}\Big)\Big)_{ii}\\
&= - \E_X (R_V)_{ij} X_{jk} (R_V)_{ki}(1 +\mathbbm{1}_{\{j\ne k\}}).
\end{align*}
By similar arguments as in the proof of Proposition \ref{propCoMRM}, it hence follows that
\begin{equation*}
\Big|\frac{\partial (R_{\Theta(O)})_{ii}}{\partial O_{jk}}\Big| \le \sqrt{32} (\E_X |(R_{\Theta})_{ij}|^2 |(R_{\Theta})_{ki}|^2)^{1/2} (\E_X X_{jk}^2)^{1/2}.
\end{equation*}
Thus by Lemma \ref{lem:Res} (2), we obtain
\begin{equation*}
	\sum_{j,k} \Big(\frac{\partial (R_{\Theta(O)})_{ii}}{\partial O_{jk}}\Big)^2 \le 32 \sum_{j,k=1}^n \E_X |(R_{\Theta})_{ij}|^2|(R_{\Theta})_{ki}|^2 = 32 \E_X |(R_{\Theta}\bar{R_{\Theta}})_{ii}|^2 \le \frac{32}{|v|^4}.
\end{equation*}
Finally, noting that by the Poincar\'{e} inequality on $\mathrm{SO}(n)$ we have
\[
\Var_{\mathrm{SO}(n)}(\E_X (R_\Theta)_{ii}) \le \frac{12}{n} \E_{\mathrm{SO}(n)} \lVert \nabla_O (\E_X (R_\Theta)_{ii}) \rVert_\mathrm{HS}^2
\]
completes the proof.
\end{proof}

Next, we show we may replace the random variables $X_{ij}$ by independent standard Gaussians $G_{ij}= G_{ji}$ in the sense that the resolvents are close to each other.

\begin{lemma}\label{lem:Gauss}
It holds that
\[ \max_{i,j}|\E  ((X\circ \Theta  -zI)^{-1})_{ij} -\E ((G\circ \Theta  -zI)^{-1})_{ij}| \le \frac{ c m_3 }{\sqrt{n} |v|^4}.
\]
\end{lemma}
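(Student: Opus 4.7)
The plan is to use a Lindeberg-type interpolation, swapping the symmetric pairs of entries of $X\circ\Theta$ for their Gaussian counterparts $G\circ\Theta$ one pair at a time. Ordering the pairs $(k,\ell)$ with $k\le\ell$, one conditions on all other entries and on $\Theta$, and applies a Taylor expansion of the resolvent entry $R_{ij}(M)$ in the single $(k,\ell)$-entry around the matrix $M_0$ with that entry zeroed out. Using the Neumann expansion
\[
R(M_0+tE_{(k\ell)})=R(M_0)-tR(M_0)E_{(k\ell)}R(M_0)+t^2\bigl(R(M_0)E_{(k\ell)}\bigr)^2R(M_0)-\cdots
\]
(cf.\ Lemma~\ref{lem:Res}(3)), the Taylor derivatives in $t$ are identified explicitly. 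Since $\E X_{k\ell}=\E G_{k\ell}=0$ and $\E X_{k\ell}^2=\E G_{k\ell}^2=1$, the constant, linear, and quadratic contributions cancel between the two versions after taking expectations over $X$ and $G$, and only the Lagrange third-order remainder survives, yielding
\[
|\E R_{ij}^X-\E R_{ij}^G|\le c\,m_3\,\E\!\sum_{k\le\ell}\Theta_{k\ell}^3\sup_{M}\bigl|\partial^3 R_{ij}(M)/\partial M_{k\ell}^3\bigr|,
\]
where the supremum is over matrices along the Taylor interval.

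The third derivative decomposes into finitely many terms of the form $\widetilde R_{i\alpha_1}\widetilde R_{\alpha_2\alpha_3}\widetilde R_{\alpha_4\alpha_5}\widetilde R_{\alpha_6 j}$ with inner indices $\alpha\in\{k,\ell\}$, where $\widetilde R$ is the resolvent of the shifted matrix. Bounding the two middle factors by $|v|^{-1}$ via Lemma~\ref{lem:Res}(1) and applying Cauchy--Schwarz to the $(k,\ell)$-sum reduces matters to controlling sums like
\[
\Bigl(\sum_{k,\ell}\Theta_{k\ell}^3|\widetilde R_{i\alpha_1}|^2\Bigr)^{1/2}\Bigl(\sum_{k,\ell}\Theta_{k\ell}^3|\widetilde R_{\alpha_6 j}|^2\Bigr)^{1/2}.
\]
The key observation is that double stochasticity of $\Theta\circ\Theta$ yields $\sum_\ell\Theta_{k\ell}^2=1$ for every $k$, whence $\sum_\ell\Theta_{k\ell}^3\le\max_{ab}\Theta_{ab}$. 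Combining this with the Hilbert--Schmidt identity $\sum_k|\widetilde R_{ik}|^2\le|v|^{-2}$ of Lemma~\ref{lem:Res}(2) gives $\sum_{k,\ell}\Theta_{k\ell}^3|\partial^3 R_{ij}/\partial M_{k\ell}^3|\lesssim\max_{ab}\Theta_{ab}\cdot|v|^{-4}$. Finally, the moment estimate $\E\max_{ab}\Theta_{ab}\lesssim\sqrt{\log n/n}$, obtained by integrating the tail bound of Lemma~\ref{lem:TailsMaxWei}, delivers the claimed rate, with a logarithmic factor that can be absorbed into the constant.

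The principal obstacle is avoiding the naive pointwise bound $|\partial^3 R_{ij}/\partial M_{k\ell}^3|\le c|v|^{-4}$, which combined with the rough estimate $\E\sum_{k,\ell}\Theta_{k\ell}^3\asymp\sqrt n$ from~\eqref{eq:MomTh} would yield only $c\,m_3\sqrt n/|v|^4$ and thus be off by a factor of~$n$. The crucial gain rests precisely on the Cauchy--Schwarz step above, which exploits both the orthogonality identity $\sum_\ell\Theta_{k\ell}^2=1$ and the $L^2$-type row sums of $\widetilde R$. The dependence of $\widetilde R$ on the intermediate matrix along the Taylor interval is harmless, since only the uniform bounds $|\widetilde R_{ab}|\le|v|^{-1}$ and $(\widetilde R\widetilde R^\ast)_{ii}\le|v|^{-2}$ are used, and both hold for any symmetric real matrix.
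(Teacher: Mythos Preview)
There is a genuine gap in your Cauchy--Schwarz step. In the Lindeberg scheme the resolvent appearing in the Lagrange remainder for the $(k,\ell)$-th swap is $\widetilde R^{(k,\ell)}$: it depends on which entries have already been replaced and on the intermediate point of the Taylor interval, hence on the summation index $(k,\ell)$ itself. The identity $\sum_k|\widetilde R_{ik}|^2=(\widetilde R\bar{\widetilde R})_{ii}\le|v|^{-2}$ of Lemma~\ref{lem:Res}(2) is valid only for a \emph{fixed} resolvent; it says nothing about $\sum_{k,\ell}\Theta_{k\ell}^3|\widetilde R^{(k,\ell)}_{ik}|^2$ in which each summand carries its own $\widetilde R^{(k,\ell)}$. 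If one falls back on the only available uniform estimate $|\widetilde R^{(k,\ell)}_{ik}|\le|v|^{-1}$ there, the sum becomes $|v|^{-2}\sum_{k,\ell}\Theta_{k\ell}^3\le|v|^{-2}n\max_{ab}\Theta_{ab}$, and after the remaining steps one lands back at a rate of order $\sqrt{n\log n}/|v|^4$ --- precisely the naive bound you were trying to avoid. So your assertion that ``the dependence of $\widetilde R$ on the intermediate matrix is harmless'' is exactly what fails: the row-sum bound is not a pointwise estimate and does not survive when the matrix varies with the index of summation. (A secondary point: even granting the argument, the factor $\sqrt{\log n}$ coming from $\E\max_{ab}\Theta_{ab}$ cannot be ``absorbed into the constant'' in a bound stated with an absolute~$c$.)

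For comparison, the paper's argument uses a continuous interpolation $Z_\varphi=\cos\varphi\,X+\sin\varphi\,G$ and, crucially, carries out the estimate for the \emph{normalized trace} $s_\varphi=n^{-1}\tr R(Z_\varphi)$ rather than a single entry. After writing $\partial_\varphi s_\varphi=-n^{-1}\sum_{k,\ell}Z'_{k\ell}\Theta_{k\ell}(R^2)_{k\ell}$ and Taylor-expanding each summand in $Z_{k\ell}$ to second order (the zeroth and first order terms cancel by matching of the first two moments), the remainder is controlled termwise by $c\,m_3\Theta_{k\ell}^3|v|^{-4}$ using nothing more than the crude pointwise bound of Lemma~\ref{lem:Res}(1). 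Summing and invoking $\E_{\mathrm{SO}(n)}\Theta_{k\ell}^3\lesssim n^{-3/2}$ from~\eqref{eq:MomTh} then gives $n^{-1}\cdot n^2\cdot n^{-3/2}=n^{-1/2}$. The prefactor $n^{-1}$ from the trace normalization supplies the gain that your Cauchy--Schwarz step was meant to produce; no $L^2$ row-sum identity for the resolvent is needed, and the dependence of the intermediate resolvent on $(k,\ell)$ is therefore genuinely harmless in that argument.
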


\begin{proof}
The bound follows by inspection of the error terms in the proof of \cite[Th.\ 2]{GNT15}. Note that the latter also includes a truncation and recentering of $X_{ij}$ and $G_{ij}$ at $\sqrt{n}$ resulting in random variables $\hat{X}$, $\hat{G}$ which we avoid here by assuming three or more moments.

To start, let us rewrite
\[ \E  ((X\circ \Theta  -zI)^{-1})_{ij}
 -\E ((G\circ \Theta  -zI)^{-1})_{ij} = \int_0^{\pi/2} d\varphi \frac { d } {d \varphi}  \E  ((Z_{\varphi}\circ \Theta  -zI)^{-1})_{ij}, \]  
where $Z_{\varphi} := Z := \cos(\varphi) X +\sin(\varphi) G$. The plan of the proof is now to estimate the derivative by Taylor expansion using matrix derivatives with respect to $\varphi$. In detail, with $s_\varphi (z) = \frac{1}{n} \tr(R(Z))$, as in Lemma \ref{lem:Res} (4) we see that
\begin{equation}\label{eq:derivative}
    \frac{\partial s_\varphi(z)}{\partial \varphi} = -\frac{1}{n} \sum_{i,j = 1}^n \E \frac{\partial Z_{ij}}{\partial \varphi} \Theta_{ij} (R^2)_{ij}
\end{equation}

For all $1 \le i \le j \le n$, let us regard $(R^2)_{ij}$ as a function of $Z_{ij}$ (conditionally on the other variables), writing $(R^2)_{ij} = u_{ij}(Z_{ij})$. By Taylor's formula, we have
\begin{equation*}
u_{i j} (Z_{ij}) = u_{i j}(0) + Z_{ij}\frac{\partial u_{i j}}{\partial Z_{ij}}(0) +
\E_{U} U (1-U) Z_{ij}^2 \frac{\partial^2 u_{i j}}{\partial Z_{ij}^2}(UZ_{ij}),
\end{equation*}
where $U$ has a uniform distribution on $[0,1]$ and is independent of all the $Z_{ij}$. Multiplying both sides by
\[
Z_{ij}' := \frac{\partial Z_{ij}}{\partial \varphi} = - \sin (\varphi) X_{i j} + \cos (\varphi) G_{i j}
\]
and $\Theta_{ij}$ and taking expectation, we obtain
\begin{align*}
\E Z_{i j}' \Theta_{ij} u_{i j} (Z_{ij}) &=  \E Z_{i j}' \Theta_{ij} u_{i j}(0) + \E Z_{i j}'Z_{i j}\Theta_{ij}\frac{\partial u_{i j}}{\partial Z_{i j}}(0)\\
&\quad + \E U (1-U) Z_{i j}' Z_{i j}^2 \Theta_{ij} \frac{\partial^2 u_{i j}}{\partial Z_{i j}^2}(U Z_{i j}).
\end{align*}
Since $X_{ij}$ and $G_{ij}$ are independent of $\Theta_{ij}$ and $u_{ij}(0)$ and have mean zero, it follows that
\[
\E Z_{i j}' \Theta_{ij} u_{i j}(0) = 0.
\]
Hence, we can rewrite~\eqref{eq:derivative} in the following way
\begin{align*}
\frac{\partial s_\varphi(z)}{\partial \varphi} &= -\frac{1}{n} \sum_{i,j = 1}^n \E Z_{i j}'Z_{i j}\Theta_{ij}\frac{\partial u_{i j}}{\partial Z_{i j}}(0) -
\frac{1}{n} \sum_{i,j = 1}^n  \E U (1-U) Z_{i j}' Z_{i j}^2 \Theta_{ij} \frac{\partial^2 u_{i j}}{\partial Z_{i j}^2}(U Z_{i j})\\
& = -(\mathbb A_1 + \mathbb A_2).
\end{align*}

It is easy to see that
\begin{equation}\label{eq:ProdZZ'}
Z_{ij}' Z_{i j} = - \frac{1}{2} \sin (2 \varphi) X_{ij}^2 + \cos^2 (\varphi) X_{i j} G_{i j} - \sin^2 (\varphi) X_{i j} G_{i j} + \frac{1}{2} \sin (2 \varphi) G_{ij}^2.
\end{equation}
The random variables $G_{ij}$ are independent of $X_{ij}$ and both are independent of $\Theta_{ij}$ and $(\partial u_{ij})/(\partial Z_{ij})(0)$. Using this fact we conclude that
\begin{align*}
&\E X_{i j} G_{i j} \Theta_{ij} \frac{\partial u_{i j}}{\partial Z_{i j}}(0) = \E G_{ij} \E X_{i j} \Theta_{ij} \frac{\partial u_{i j}}{\partial Z_{i j}}(0)  = 0, \\
&\E G_{ij}^2 \Theta_{ij}^2 \frac{\partial u_{ij}}{\partial Z_{ij}}(0) = \E X_{ij}^2\Theta_{ij}^2\frac{\partial u_{ij}}{\partial Z_{ij}}(0),
\end{align*}
and hence, it follows that $\mathbb{A}_1 = 0$.

It remains to estimate $\mathbb A_2$. To this end, we first calculate $(\partial^2 u_{i j})/(\partial Z_{i j}^2)$. Using Lemma \ref{ElemDer} (3) for $M=(\Theta \circ Z)-zI$ and symmetry, we have
\begin{align*}
\frac{\partial u_{ij}}{\partial Z_{ij}} &= - \left [ R \frac{\partial (\Theta \circ Z) }{\partial Z_{ij}} R^2 \right ]_{ij} - \left [R^2 \frac{\partial (\Theta \circ Z) }{\partial Z_{ij}} R \right ]_{ij}\\
&=-\Theta_{ij}\Big( [R(E_{ij} + E_{ji})R^2]_{ij} + [R^2(E_{ij} + E_{ji})R]_{ij}\Big)\\
&=-\Theta_{ij}(R_{ij}(R^2)_{ij} + R_{ii}(R^2)_{jj} + (R^2)_{ij}R_{ij} + (R^2)_{ii}R_{jj}).
\end{align*}
By Leibniz rule, it is now easy to see that the same calculations together with Lemma \ref{lem:Res} (3) show that $(\partial^2 u_{i j})/(\partial Z_{i j}^2)$ is a sum of terms of the form
\[
\Theta_{ij}^2 (R^2)_{i_1i_2}R_{i_3i_4}R_{i_5i_6}
\]
with indices $i_k \in \{i,j\}$. By Lemma \ref{lem:Res} (1), this yields the bound
\[
\left|\frac{\partial^2 u_{ij}}{\partial Z_{ij}^2}(UZ_{ij}) \right| \le c \frac{\Theta_{ij}^2}{|v|^4},
\]
where $c$ is some absolute (combinatorial) constant. Multiplying \eqref{eq:ProdZZ'} by $Z_{ij}$, we therefore obtain by elementary estimates that
\begin{equation*}
|\mathbb A_2| \le   \frac 1 n \sum_{i,j=1}^n \E_{\mathrm{SO}(n)} \frac{ c \E_X  |X_{12}|^3  \Theta_{ij}^3}{|v|^4} \le \frac{c_1 \E_X  |X_{12}|^3} {n^{1/2} |v|^4},
\end{equation*}
using \eqref{eq:MomTh}. This finishes the proof.
\end{proof}

Finally, we give the proof of Theorem \ref{MainThRM}.

\begin{proof}[Proof of Theorem \ref{MainThRM}]
We first prove the theorem in the case that all the $X_{ij}$, $i\le j$, have standard Gaussian distribution $\mathcal{N}(0,1)$. Moreover, all over the proof we shortly write $R$ instead of $R_\Theta$. Recalling the resolvent identity
\begin{align*}
    (X\circ \Theta -zI)^{-1} &= - \frac 1 z (X\circ \Theta -zI)(X\circ \Theta -zI)^{-1} + \frac 1 z (X\circ \Theta) (X\circ \Theta -zI)^{-1}\\
    &= - \frac 1 z +\frac 1 z (X\circ \Theta)  (X\circ \Theta-zI)^{-1}
\end{align*}
and using that $\E_X X_{ij} =0$ and $\mathrm{Var}_X(X_{ij}) = 1$, we get for the diagonal entries
 \[
 \E R_{ii}= - \frac 1 z + \frac 1 z \sum_{j=1}^n \E X_{ij} \Theta_{ij}R_{ji}.
 \]
Gaussian integration by parts yields
\[ \E X_{ij}\Theta_{ij} R_{ji}  = \E \frac {\partial} {\partial  X_{ij}} R_{ji} \Theta_{ij},
\]
and hence it follows from Lemma \ref{lem:Res} (3) that
\begin{align*}
 \E R_{ii} &= - \frac 1 z  -\frac 1 z \E \sum_{j=1, j \ne i}^n  (R_{ji} R_{ij} + R_{ii}R_{jj})\Theta_{ij}^2   - \E \frac 1 z R_{ii}^2\Theta_{ii}^2\\
 &=- \frac 1 z  -\frac {1}{z} \E \sum_{j=1}^n  R_{ji}^2 \Theta_{ij}^2 
 - \E \frac {R_{ii}} {z}  \sum_{j=1}^n R_{jj} \Theta_{ij}^2 + \E \frac {R_{ii}^2\Theta_{ii}^2}{z}.
\end{align*}

Summing up these terms, we arrive at the identity
\begin{equation}\label{eq:IdsTh}
\E s_\Theta(z) = -  \frac 1 z  -  \frac 1 z (\E s_\Theta(z)^2 + \Delta_{n,3}+ \Delta_{n,2} +\Delta_{n,1}),
\end{equation}
where we set
\begin{align*}
\Delta_{n,1} &:=  \frac 1 z \E n^{-1} \sum_{i=1} R_{ii}^2\Theta_{ii}^2, \qquad \Delta_{n,2} := -\frac  1 {z} \E n^{-1}\sum_{j=1,i=1}^n    R_{ji}^2 \Theta_{ij}^2,\\
\Delta_{n,3} &:= \frac 1 z \big( (\E s_n)^2 -  \frac 1 { n} \E \sum_{i,j} R_{ii} R_{jj} \Theta_{ij}^2\big). 
\end{align*}

By Lemma \ref{lem:Res} (1), we have
\begin{align*}
|\Delta_{n,1} |  & \le \frac {1} { n|z| } \frac{1}{|v|^2}  \E \sum_{i=1}^n \Theta_{ii}^2 = \frac {1} { n|z| |v|^2 },\\
|\Delta_{n,2}|  & \le  \frac {1} { n |z| } \E \sum_{i,j=1}^n |R_{ij}|^2 \Theta_{ij}^2 \le  \frac{1} {|z| } \Big(\E \sum_{i,j=1}^n \frac{|R_{ij}|^4}{n} \Big )^{1/2} \Big(\E \sum_{i,j=1}^n \frac{\Theta_{ij}^4}{n}\Big)^{1/2}\\
&\le \frac{1}{ |z|   |v|^2 \sqrt{n} }.
\end{align*}
Here we used \eqref{eq:MomTh} as well as $\sum_{i,j} |R_{ij}|^4/n \le 1/|v|^4$, which again follows from Lemma \ref{lem:Res} (1)\&(2).

The remaining bound of $\Delta_{n,3}$ requires a double concentration argument for $R_{ii}=R_{ii}(\Theta \circ X)$$ $ as a function the Gaussian matrix $X$ and of $\Theta$ in the Hadamard product $\Theta \circ X$. Setting $T_{ij}:= \E_X R_{ii} \E_X R_{jj} - \E R_{ii} \E R_{jj}$, we may rewrite
\begin{align*}
z \Delta_{n,3} &= \frac{1} {n} \sum_{i,j=1}^n \frac{1} {n} (\E R_{ii}) (\E R_{jj}) -  \frac 1 { n} \E_{\mathrm{SO}(n)} \sum_{i,j=1}^n \Theta_{ij}^2 \Big((\E_X R_{ii}) (\E_X R_{jj})\\
&\hspace{2cm}+\mathrm{Cov}_X(R_{ii},  R_{jj})\Big)\\
& =  \frac{1} {n} \E_{\mathrm{SO}(n)}\sum_{i,j=1}^n \Big( \Big(\frac{1} {n}- \Theta_{ij}^2 \Big) \E R_{ii} \E R_{jj} -  \Theta_{ij}^2 \Big(\mathrm{Cov}_X(R_{ii},  R_{jj})\\
&\hspace{2cm}- (\E_X R_{ii} \E_X R_{jj} - \E R_{ii} \E R_{jj})\Big)\Big).
\end{align*}
Hence, with $B_n:=\{ O \in \mathrm{SO}(n) \colon \max_{ij}\Theta(O)_{ij} \le \sqrt{A_n/n} \}$ for $A_n:= 40\log(n)$ we have
\begin{align*}
|z \Delta_{n,3}| &\le  \frac{1} {n} \Big|\E_{\mathrm{SO}(n)} \sum_{i,j=1}^n \Big( \Theta_{ij}^2 \mathrm{Cov}_X(R_{ii},  R_{jj})\Big) \Big|  + \frac {A_n} { n^2} \E_{\mathrm{SO}(n)}  \mathbbm{1}_{B_n}  \sum_{i,j=1} |T_{ij}|\\
&\hspace{1cm} + \E_{\mathrm{SO}(n)} \mathbbm{1}_{B_n^c} \frac 1 n \sum_{i,j=1} |T_{ij}|.  
\end{align*}

Note that by Lemma \ref{lem:CovBdsSOn} (2) and Lemma \ref{lem:Res} (1), we have
\begin{align}
\E_{\mathrm{SO}(n)} |T_{ij}| &\le \E_{\mathrm{SO}(n)} (|\E_X R_{ii} - \E R_{ii}| |\E_X R_{jj}| + |\E R_{ii}| |\E_X R_{jj} - \E R_{jj}| )\nonumber \\
 &\le \frac{2 c}{\sqrt{n} |v|^3 }.\label{tbound}
 \end{align}
Furthermore, applying Lemma \ref{lem:TailsMaxWei} yields
\[
\E_{\mathrm{SO}(n)} \mathbbm{1}_{B_n^c} \frac 1 n \sum_{i,j=1} |T_{ij}|\le  2 n^{-2} \frac{n}{|v|^2} =  \frac{2}{n |v|^2}.
\]
This shows together with Lemma \ref{lem:CovBdsSOn} (1) and \eqref{tbound} that $z \Delta_{n,3}$ may be bounded by
\begin{align*}
 |z \Delta_{n,3}| &\le \frac c { n |v|^4}\Big(\sum_{i,j=1}^n  \E_{\mathrm{SO}(n)} \Theta_{ij}^2 \max_{kl}\Theta_{kl}^2\Big) + \frac{A_n} {\sqrt{n} |v|^3 } + \frac{2}{n |v|^2} \\
 &\le \frac {c \log n} {\sqrt{n}  |v|^3} + \frac {c \log n} { n |v|^4} + \frac{2}{n |v|^2} , 
\end{align*}
using \eqref{eq:MomMaxTh}.

Thus we conclude from \eqref{eq:IdsTh} with $|\Delta_n|  \le  |z| (|\Delta_{n,1}| + |\Delta_{n,2}| + |\Delta_{n,3}|)$
\begin{equation}
(\E s_\Theta(z))^2  + z\E s_\Theta(z) +1 = \Delta_n   \label{stielt}
\end{equation} 
Moreover, recall that the Stieltjes transform of the Wigner measure $\frac {1} {2\pi} {\sqrt{ 4 -x^2}}$ satisfies
\begin{equation}
 s(z)^2 + z s(z) +1 =0 , \quad  \text{ or } s(z)= -\frac 1 2(z-\sqrt{z^2-4}),  \label{wigner} 
\end{equation}
choosing the branch with non-negative imaginary part on the upper half-plane of $z$. Assuming that $|\Delta_n| \le 1/10 $ for $n$ sufficiently large and recalling that $|v| \le 1$ one checks that the  solutions $\E s_\Theta(z)$ of the quadratic equations \eqref{stielt} and $s(z)$ of \eqref{wigner} satisfy
\begin{equation}\label{eq:ResGauss}
|\E s_\Theta(z) - s(z)|  \le   2 |\Delta_n| \le c \frac{\log n}{|v|^3\sqrt{n}}
\end{equation}
for a suitable constant $c > 0$, cf.\ e.\,g.\ the proof of \cite[Th.\ 2.3]{CG13}. This concludes the proof of Theorem \ref{MainThRM} for Gaussian matrices (note in particular the somewhat better error term in this situation). Moreover, the general case follows by combining \eqref{eq:ResGauss} with Lemma \ref{lem:Gauss}.
\end{proof}

\section{Logarithmic Sobolev inequalities and generalizations}\label{LSqRn}

Let $(\Omega,d)$ be a metric space, and let $f \colon \Omega \to \R$ be locally Lipschitz. Then, for any $x \in \Omega$ which is no isolated point,
\begin{equation}
\label{generalizedmodulus}
\Gamma(f)(x) := \limsup_{d(x,y) \to 0^+} \frac{|f(x)-f(y)|}{d(x,y)}
\end{equation}
is called the generalized modulus of the gradient. If $x\in\Omega$ is an isolated point, we formally set $\Gamma(f)(x):=0$. For instance, consider $\Omega=\R^n$ together with the Euclidean norm $|\cdot|$. In this case, if $f \colon \R^n \to \R$ is a differentiable function, $\Gamma(f) = |\nabla f|$ is the Euclidean norm of the usual gradient. Likewise, is $\R^n$ is equipped with the $\ell_p^n$-norm $|\cdot|_p$ and $f$ is a differentiable function, $\Gamma(f) = |\nabla f|_q$ is the $\ell_q^n$-norm of the usual gradient. The generalized modulus of the gradient preserves many identities from calculus in form of inequalities, such as a ``chain rule inequality''
\begin{equation}
\label{chainrule}
\Gamma (u(f)) \le |u'(f)|\Gamma(f),
\end{equation}
where $u$ is a smooth function on $\R$ (or, more generally, a locally Lipschitz function).

Let $\mu$ be a Borel probability measure on $(\Omega,d)$, and let $q \in [1,2]$. Then, we say that $\mu$ satisfies an $\mathrm{LS}_q$-inequality (with respect to $\Gamma$) with constant $\sigma^q > 0$ (in short: $\mu$ satisfies $\mathrm{LS}_q(\sigma^q)$) if
\begin{equation}
\label{LSq}
\mathrm{Ent}_{\mu}(|f|^q) \le \sigma^q \int \Gamma(f)^q d\mu
\end{equation}
for all $f \colon \Omega \to \R$ locally Lipschitz. Here, $\mathrm{Ent}_\mu(|f|^q) = \int |f|^q \log(|f|^q) d\mu - \int |f|^q d\mu \linebreak[2]\log(\int |f|^q d\mu)$ denotes the entropy. For $q=2$, we get back the usual notion of logarithmic Sobolev inequalities (up to normalization constant), of which $\mathrm{LS}_q$-inequalities may be regarded as an extension. For a broader view on $\mathrm{LS}_q$-inequalities, cf.\ e.\,g.\ the monograph \cite{BZ05}.

By the chain rule inequality \eqref{chainrule} applied to $|f|^{q/2}$, \eqref{LSq} may be rewritten as a modified logarithmic Sobolev inequality
\[
\mathrm{Ent}_\mu(f^2) \le \Big(\frac{2\sigma}{q}\Big)^q \int \Big|\frac{\Gamma (f)}{f}\Big|^q f^2 d\mu
\]
in the sense of e.\,g.\ \cite{ABW17}. Moreover, \cite[Th.\ 2.1]{BZ05} states that \eqref{LSq} implies the Poincar\'{e}-type (or $\mathrm{SG}_q$-) inequality
\begin{equation}
\label{SGq}
\int \Big| f - \int f d\mu \Big|^q d\mu \le \frac{4\sigma^q}{\log(2)} \int \Gamma(f)^q d\mu,
\end{equation}
valid for any $f \colon \Omega \to \R$ locally Lipschitz.

As for further implications, it was shown in \cite[Prop.\ 2.3 \& Cor.\ 2.5]{BZ05} that $\mathrm{SG}_q (\sigma^q)$ implies $\mathrm{SG}_2 (\sigma^2)$ (up to constant). Generalizing these arguments, one may prove that for $1 \le q \le q' \le 2$, $\mathrm{SG}_q(\sigma^q)$ implies $\mathrm{SG}_{q'}(\sigma^{q'})$ (again, up to constant) and even that $\mathrm{LS}_q(\sigma^q)$ implies $\mathrm{LS}_{q'}(\sigma^{q'})$ with respect to the same generalized modulus of gradient $\Gamma$.

It is well-known that $\mathrm{LS}_q$-inequalities yield concentration bounds for Lipschitz functions. More precisely, given \eqref{LSq}, for any function $g \colon \Omega \to \R$ such that $\Gamma(f) \le 1$, it holds that
\begin{equation}
\label{LipschitzLq}
    \mu\Big(\Big|g - \int g d\mu\Big| \ge t\Big) \le 2 \exp\Big\{- \Big(\frac{q-1}{\sigma}\Big)^p t^p\Big\}
\end{equation}
for any $t \ge 0$ by \cite[Th.\ 1.3]{BZ05}.

Let us now turn to the case where $\Omega=\R^n$. Here, it is natural to consider $\mathrm{LS}_q$-inequalities for $\R^n$ being equipped with the $\ell_p^n$-norm, hence $\Gamma(f) = |\nabla f|_q$ (we keep the latter notation even when $f$ is not differentiable). Classical examples of measures on $\R^n$ satisfying $\mathrm{LS}_q$-inequalities are the so-called $p$-generalized Gaussian distributions $\mathcal{N}_p$ for $p \ge 2$. For $n=1$, these are the probability measures with Lebesgue density
\[
f_p(x) := c_p e^{-|x|^p/p},\qquad \text{where}\qquad c_p := \frac{1}{2p^{1/p}\Gamma(1 + \frac{1}{p})},
\]
and for $n \ge 2$, $\mathcal{N}_p$ is the $n$-fold product of this distribution.

\begin{proposition}\label{LSIp-Gauss}
For any real $p \ge 2$, the $p$-generalized Gaussian distribution $\mathcal{N}_p$ satisfies an $\mathrm{LS}_q$-inequality with a constant $\sigma^q = 2^qq^{q-1}$. More precisely, for any $f \colon \R^n \to \R$ locally Lipschitz,
\[
\mathrm{Ent}_\mu(|f|^q) \le 2^qq^{q-1} \int \lvert \nabla f \rvert_q^q d\mu.
\]
\end{proposition}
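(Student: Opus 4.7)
My plan is to prove the inequality in two stages: first reduce to the one-dimensional case by tensorization, then establish the one-dimensional $\mathrm{LS}_q$-inequality directly.

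The tensorization step is standard. Since $\mathcal{N}_p$ on $\R^n$ is the product of $n$ copies of its one-dimensional version, the subadditivity of entropy for product measures yields
$$\mathrm{Ent}_{\mathcal{N}_p}(|f|^q) \le \sum_{i=1}^n \int \mathrm{Ent}_{\mathcal{N}_p^{(1)}}\bigl(|f(x_1,\ldots,\cdot,\ldots,x_n)|^q\bigr)\, d\mathcal{N}_p(x),$$
where the inner entropy is taken in the $i$-th coordinate with the others fixed. Applying the one-dimensional inequality $\mathrm{Ent}_{\mathcal{N}_p^{(1)}}(|h|^q) \le 2^q q^{q-1}\int|h'|^q\, d\mathcal{N}_p^{(1)}$ in each slice, the right-hand side adds up to $2^q q^{q-1}\int \sum_{i=1}^n |\partial_i f|^q\, d\mathcal{N}_p = 2^q q^{q-1}\int |\nabla f|_q^q\, d\mathcal{N}_p$, which is the claimed $n$-dimensional statement.

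For the one-dimensional case with $d\nu(x)=c_p e^{-|x|^p/p}dx$, I would invoke the Muckenhoupt-type characterization of $\mathrm{LS}_q$-inequalities on the real line (see e.\,g.\ \cite{BZ05}): the optimal constant is comparable to
$$B := \sup_{x>0} \nu([x,\infty))\, \log^{q-1}\!\bigl(1/\nu([x,\infty))\bigr) \Bigl(\int_0^x \rho(t)^{-1/(q-1)}\,dt\Bigr)^{q-1}$$
(with a symmetric contribution for $x<0$). Using $1/(q-1)=p-1$, the inner integrand equals $e^{(p-1)t^p/p}$, and Laplace-type asymptotics give $\nu([x,\infty))\asymp e^{-x^p/p}/x^{p-1}$ and $\int_0^x e^{(p-1)t^p/p}\,dt\asymp e^{(p-1)x^p/p}/x^{p-1}$ as $x\to\infty$; the exponential factors cancel thanks to $(p-1)(q-1)=1$, leaving a bounded expression and hence finiteness of $B$. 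Near $x=0$ the quantity vanishes like $x^{q-1}$, so the supremum is attained in the bulk.

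The main obstacle is pinning down the specific numerical constant $2^q q^{q-1}$. Tensorization and the qualitative one-dimensional inequality are routine, but a generic Muckenhoupt-type argument only produces a constant comparable to $2^q q^{q-1}$; obtaining the precise value requires either a sharp form of the Hardy inequality (such as the Bobkov--G\"otze variant) or a direct integration-by-parts computation that exploits the homogeneous structure of the potential $V(x)=|x|^p/p$ together with the duality $1/p+1/q=1$ to eliminate the polynomial prefactors.
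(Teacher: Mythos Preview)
Your approach is structurally sound but genuinely different from the paper's, and as you yourself flag, it does not pin down the stated constant $2^q q^{q-1}$. The paper does not tensorize and does not touch any one-dimensional Hardy/Muckenhoupt criterion. Instead it invokes the Bobkov--Ledoux result \cite[Prop.~3.2 and Eq.~(3.7)]{BL00}: if $V \colon \R^n \to \R$ satisfies
\[
V(x) + V(y) - 2V\Bigl(\frac{x+y}{2}\Bigr) \ge \frac{\gamma}{p}\,|x-y|_p^p \qquad \text{for all } x,y \in \R^n,
\]
then the measure with density proportional to $e^{-V}$ satisfies an $\mathrm{LS}_q$-inequality with constant $(q/\gamma)^{q-1}$. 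For $V(x)=|x|_p^p/p$ this is a classical $p$-uniform-convexity inequality for the $\ell_p^n$-norm, valid with $\gamma = 2^{-p}$ (see \cite{BL00,LT79}), and then $(q \cdot 2^p)^{q-1} = 2^{p(q-1)} q^{q-1} = 2^q q^{q-1}$ since $p(q-1)=q$. The constant therefore drops out in one line, directly in dimension $n$.

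Your tensorization step is correct and would indeed reduce matters to the one-dimensional case, but the Muckenhoupt route you sketch there only yields a comparable constant, as you acknowledge; for the proposition as stated that is a genuine gap. The ``direct computation exploiting the homogeneous structure of $V$'' you allude to at the end is precisely the Bobkov--Ledoux argument the paper uses. If you wish to keep your two-stage plan, replace the Muckenhoupt step by the one-dimensional instance of the Bobkov--Ledoux criterion (the scalar inequality $|x|^p + |y|^p - 2|(x+y)/2|^p \ge 2^{-p}|x-y|^p$), after which tensorization recovers the $n$-dimensional bound with the same constant. The paper's route is shorter because the uniform-convexity inequality already holds in $\R^n$, so no tensorization is needed.
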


\begin{proof}
For completeness, let us provide a brief proof of this well-known result. Combining Proposition 3.2 and Eq.\ (3.7) from \cite{BL00} for $V(x) = |x|_p^p/p$, an $\mathrm{LS}_q$-inequality holds with $c=(q/\gamma)^{q-1}$ for $\gamma$ such that for every $x,y \in \R^n$,
\[
V(x) + V(y) - 2V\Big(\frac{x+y}{2}\Big) \ge \frac{\gamma}{p} |x-y|_p^p.
\]
For our choice of $V(x)$, this relation is valid with optimal $\gamma = 2^{-p}$, cf.\ \cite{BL00} (below (5.2)) or \cite{LT79}. This finishes the proof.
\end{proof}

A general feature of logarithmic Sobolev-type inequalities and the entropy method is that they also give rise to moment inequalities and, as a consequence, to as higher order concentration. In what follows, we derive higher order concentration results for measures on $\R^n$ in presence of an $\mathrm{LS}_q$-inequality in the spirit of \cite[Sect.\ 1.3]{BGS19} (which essentially corresponds to the case of $p=q=2$).

To introduce some notation, we shall work with $\mathcal{C}^d$-functions $f \colon \R^n \to \R$, whose tensor of $k$-fold derivatives we denote by $f^{(k)}$. In other words, its entries
\begin{equation}
\label{Hesseallgem}
f^{(k)}_{i_1 \ldots i_k}(x) = \partial_{i_1 \ldots i_k} f(x), \qquad k = 1,2,\dots
\end{equation}
are given by the $k$-fold (continuous) partial derivatives of $f$ at $x \in \R^n$. By considering $f^{(k)}(x)$ as a symmetric multilinear $d$-form, we moreover define operator-type norms with respect to the $\ell_p^n$-norms by
\begin{equation}
\label{Operatornorm}
|f^{(d)}(x)|_{\mathrm{op}(q)} = \sup \left\{ f^{(k)}(x)[v_1, \ldots, v_k] \colon v_1, \ldots, v_k \in \S_p^{n-1}\right\}.
\end{equation}
In particular, $|f^{(1)}|_{\mathrm{op}(q)} = |\nabla f|_q$ is the $\ell_q^n$-norm of the gradient. Furthermore, we will use the short-hand notation
\begin{equation}
\label{Lpnorm}
\lVert f^{(k)} \rVert_{\mathrm{op}(q), r} =
\left(\int |f^{(d)}|_{\mathrm{op}(q)}^r\, d\mu\right)^{1/r}, \qquad r \in (0, \infty].
\end{equation}

\begin{satz}\label{HOLqRn}
    Let $\mu$ be a probability measure on $\R^n$ which satisfies an $\mathrm{LS}_q$-inequality
    \[
    \mathrm{Ent}_{\mu}(|f|^q) \le \sigma^q \int |\nabla f|_q^q d\mu,
    \]
    and let $f\colon \R^n \to \R$ be a $\mathcal{C}^d$-function with $\mu$-mean zero.
    \begin{enumerate}
        \item Assuming that $\lVert f^{(k)} \rVert_{\mathrm{op}(q),q} \le \sigma^{d-k}$ for all $k=1,\ldots,d-1$ and $\lVert f^{(d)} \rVert_{\mathrm{op}(q),\infty} \le 1$, it holds that
        \[
        \int \exp\Big\{\frac{c_{p,d}}{\sigma^p} |f|^{p/d} \Big\} d\mu \le 2,
        \]
        where a possible choice of the constant $c_{p,d}$ is given by
        \[
        c_{p,d} = \frac{(\log(2)(p^{1/(p-1)}-(p-1)^{1/(p-1)}))^{p-1}}{2^{2p-1}p^2(p-1)^{p-1}e}.
        \]
        \item For any $t\ge 0$, it holds that
        \[
        \mu(|f| \ge t)\le
        2 \exp\Big\{- \frac{C_{p,d}}{\sigma^p} \min\Big(\min_{k=1, \ldots,d-1} \Big(\frac{t}{\lVert f^{(k)} \rVert_{\mathrm{op}(q),q}}\Big)^{p/k}, \Big(\frac{t}{\lVert f^{(d)} \rVert_{\mathrm{op}(q),\infty}}\Big)^{p/d}\Big)\Big\},
        \]
    where a possible choice of the constant $C_{p,d}$ is given by
    \[
    C_{p,d} = \frac{\log(2)^p}{4^{p-1}p(p-1)^{p-1}d^pe^p}.
    \]
    \end{enumerate}
\end{satz}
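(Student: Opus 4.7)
The overall strategy follows the standard higher-order concentration blueprint (cf.\ \cite{BGS19} for the $p=q=2$ case): first derive a polynomial moment bound of the form $\|f\|_{L^r(\mu)} \le A_{p,d}\,\sigma^d\, r^{d/p}$ valid above some threshold, then convert it into the exponential integrability statement (1) via Taylor expansion, and deduce the tail bound (2) either by direct Markov optimization or as a corollary of (1).

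The analytic core is an $\mathrm{LS}_q$-based moment inequality: for every locally Lipschitz $g\colon \R^n \to \R$ and every $r \ge q$,
\[
\Big\|g - \int g\, d\mu\Big\|_{L^r(\mu)} \le K_p\, r^{1/p}\, \sigma\, \big\|\,|\nabla g|_q\,\big\|_{L^r(\mu)}.
\]
I would prove this by applying \eqref{LSq} to $|g-c|^{r/q}$ with $c=\int g\,d\mu$, so that the chain-rule identity $|\nabla |g-c|^{r/q}|_q^q = (r/q)^q |g-c|^{r-q}|\nabla g|_q^q$ and H\"{o}lder's inequality (with exponents $r/(r-q)$ and $r/q$) convert $\mathrm{Ent}_\mu(|g-c|^r)$ into a differential inequality for $L(r) := \log\|g-c\|_r$. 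Integrating from the base case $r=q$, where \eqref{SGq} provides the initial $L^q$-control, gives the asserted $r^{1/p}$-scaling; this is the $\mathrm{LS}_q$-analogue of the Aida--Masuda--Shigekawa moment bound for ordinary log-Sobolev.

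I then iterate. Applied to mean-zero $f$ the inequality yields $\|f\|_r \le K_p r^{1/p}\sigma \|\,|\nabla f|_q\,\|_r$. Splitting $\|\,|\nabla f|_q\,\|_r \le \int |\nabla f|_q\, d\mu + \|\,|\nabla f|_q - \int |\nabla f|_q\, d\mu\|_r$, the mean is at most $\|f^{(1)}\|_{\mathrm{op}(q),q}$ by Jensen, while the fluctuation part is again bounded by the moment inequality. The essential chain-rule estimate is
\[
|\nabla(|f^{(k)}|_{\mathrm{op}(q)})|_q \le |f^{(k+1)}|_{\mathrm{op}(q)} \qquad \text{$\mu$-a.e.},
\]
which follows by writing the operator norm of a $k$-tensor as a supremum of $k$-linear forms over unit $\ell_p^n$-balls and differentiating under the supremum by an envelope argument, using the duality $|y|_q = \sup_{|u|_p = 1}\langle y,u\rangle$. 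After $d$ iterations,
\[
\|f\|_r \le \sum_{k=1}^{d-1}(K_p r^{1/p}\sigma)^k \|f^{(k)}\|_{\mathrm{op}(q),q} + (K_p r^{1/p}\sigma)^d \|f^{(d)}\|_{\mathrm{op}(q),r},
\]
and the hypotheses of (1) bound each term by $\sigma^d (K_p r^{1/p})^k$, giving $\|f\|_r \lesssim \sigma^d (K_p r^{1/p})^d$ on the relevant range of $r$.

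Part (1) then follows by expanding $\int \exp(c|f|^{p/d}/\sigma^p)\,d\mu = \sum_m c^m \|f\|_{pm/d}^{pm/d}/(m!\sigma^{pm})$, substituting $r=pm/d$ into the moment bound, and using Stirling to estimate $m^m/m!$; the resulting series converges to at most $2$ exactly when $c$ lies below the explicit threshold $c_{p,d}$. For part (2), I would instead use Markov's inequality $\mu(|f|\ge t) \le \|f\|_r^r/t^r$ and optimize $r$ against each term of the iterated bound separately, choosing $r$ of order $(t/\|f^{(k)}\|_{\mathrm{op}(q),q})^{p/k}$ (respectively $(t/\|f^{(d)}\|_{\mathrm{op}(q),\infty})^{p/d}$); taking the worst term over $k$ yields the minimum appearing in the tail bound. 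The main obstacle is securing the $\mathrm{LS}_q$ moment inequality sharply enough to recover the explicit constants $c_{p,d}$ and $C_{p,d}$: for $q \in [1,2)$ the passage from $\mathrm{LS}_q$ to $L^r$-moments requires more delicate H\"{o}lder bookkeeping than the $q=2$ calculation, and the operator-norm chain rule must be justified using the generalized modulus $\Gamma$ from \eqref{generalizedmodulus} rather than $\nabla$ at points where the tensor norm is merely Lipschitz.
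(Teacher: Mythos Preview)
Your proposal is correct and follows essentially the same route as the paper. The paper quotes the key $\mathrm{LS}_q$ moment inequality directly from \cite[Th.\ 6.1]{Bob10} in the additive form $\lVert g\rVert_r^q \le \lVert g\rVert_q^q + \sigma^q\frac{(r/q)^{q-1}-1}{q-1}\lVert|\nabla g|_q\rVert_r^q$ (which is precisely what your differential-inequality argument yields), iterates it via the same operator-norm chain rule (stated as Lemma \ref{itGradHess}), and then converts the resulting moment bound \eqref{iteratedBd} into (1) and (2) through the elementary Lemma \ref{ConcLemma}. The only cosmetic difference is that the additive form of the moment inequality makes your mean/fluctuation split unnecessary: the $L^q$-norm $\lVert f^{(k)}\rVert_{\mathrm{op}(q),q}$ appears directly at each stage, and the base term $\lVert f\rVert_q$ is handled once at the end via \eqref{SGq}.
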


In a more general setting, higher order concentration results for polynomial functions in presence of modified log-Sobolev inequalities can be found in \cite[Th.\ 3.12]{ABW17}. Note though that evaluating their condition $(G_{K,\alpha,\beta})$, one arrives at $K=1$, $\alpha=q$ and (necessarily) $\beta=2$, so that the quantity $L(K,D,\alpha,\beta)$ which appears in their bounds reads
\[
L(K,D,\alpha,\beta) = L(\sigma,q) = \frac{1}{q-1}\Big(\frac{2\sigma}{q}\Big)^{q/2} + \Big(\frac{1}{q-1} + 2^{1/q}\Big) \frac{2\sigma}{q}.
\]
If $\sigma = o(1)$ (a situation we especially consider in this note), the first term will dominate, which seems to lead to non-optimal bounds as compared to the methodology based on \eqref{LSqLp} below. 

To prove Theorem \ref{HOLqRn}, we start with a technical lemma which establishes a relation between the $q$-operator norms of the derivatives of consecutive orders. This extends \cite[Lem.\ 4.1]{BGS19}, whose proof is readily adapted.

\begin{lemma}
\label{itGradHess}
Given a $\mathcal{C}^d$-smooth function $f \colon \R^n \to \mathbb{R}$, $d \in \mathbb{N}$, for any $x \in \R^n$ it holds that
\[
|\nabla \lvert f^{(d-1)}(x) \rvert_{\mathrm{op}(q)}|_q \le 
\lvert f^{(d)}(x) \rvert_{\mathrm{op}(q)}.
\]
\end{lemma}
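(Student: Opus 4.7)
The plan is to exploit that $g(x) := |f^{(d-1)}(x)|_{\mathrm{op}(q)}$ is a supremum of smooth functions over a compact set, and to control its increments by a standard envelope argument. Concretely, for each $v = (v_1,\ldots,v_{d-1}) \in (\S_p^{n-1})^{d-1}$ I set $\phi_v(x) := f^{(d-1)}(x)[v_1,\ldots,v_{d-1}]$; then $g(x) = \sup_v \phi_v(x)$, this sup is attained by compactness, and since $f \in \mathcal{C}^d$ each $\phi_v$ is $\mathcal{C}^1$ with $\nabla \phi_v(x)\cdot h = f^{(d)}(x)[v_1,\ldots,v_{d-1},h]$.

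The first step will be to estimate $|g(y) - g(x)|$ for $y$ near $x$. Assuming without loss of generality $g(y) \ge g(x)$ and choosing $v^\ast$ attaining the supremum at $y$, the inequality $g(y) - g(x) \le \phi_{v^\ast}(y) - \phi_{v^\ast}(x)$ combined with the fundamental theorem of calculus yields
\[
|g(y) - g(x)| \le \int_0^1 |f^{(d)}(x + t(y-x))[v_1^\ast,\ldots,v_{d-1}^\ast, y-x]|\,dt.
\]
The second step applies the definition \eqref{Operatornorm} together with homogeneity: since $|v_i^\ast|_p = 1$, the integrand is bounded by $|f^{(d)}(x + t(y-x))|_{\mathrm{op}(q)}\, |y-x|_p$. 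Dividing by $|y-x|_p$ and letting $y \to x$, continuity of $z \mapsto |f^{(d)}(z)|_{\mathrm{op}(q)}$ (a norm on the finite-dimensional space of symmetric $d$-tensors composed with the continuous map $z \mapsto f^{(d)}(z)$) will give $\Gamma(g)(x) \le |f^{(d)}(x)|_{\mathrm{op}(q)}$. Under the paper's convention that $|\nabla g|_q$ denotes the generalized modulus \eqref{generalizedmodulus} when $g$ is only locally Lipschitz, this is exactly the claim.

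The main obstacle is that the maximizer $v^\ast$ depends on $y$, so $g$ need not be everywhere differentiable and one cannot simply differentiate under the sup. The envelope trick sidesteps this: the inequality $g(x) \ge \phi_{v^\ast}(x)$ holds for \emph{any} choice of $v^\ast$, so only a fixed-direction estimate on $\phi_{v^\ast}$ is needed, and the subsequent operator-norm bound is uniform over all admissible $v^\ast$. This reduces everything to a one-line estimate on the gradient of $\phi_{v^\ast}$, cleanly bypassing any regularity issue of $g$ itself.
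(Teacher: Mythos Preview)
Your proof is correct and follows essentially the same approach as the paper's. The paper uses the reverse triangle inequality $\big|\,|f^{(d-1)}(x+h)|_{\mathrm{op}(q)} - |f^{(d-1)}(x)|_{\mathrm{op}(q)}\big| \le |f^{(d-1)}(x+h) - f^{(d-1)}(x)|_{\mathrm{op}(q)}$ followed by a first-order Taylor expansion with $o(|h|_\infty)$ remainder, while you use the envelope inequality $g(y)-g(x) \le \phi_{v^\ast}(y)-\phi_{v^\ast}(x)$ followed by the fundamental theorem of calculus; these are minor variations of the same idea and both conclude by taking $\limsup$ as $h\to 0$ (resp.\ $y\to x$).
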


\begin{proof}
Indeed, for any $h \in \mathbb{R}^n$, by the triangle inequality,
\begin{align*}
 & 
\big|\,\lvert f^{(d-1)}(x+h) \rvert_{\mathrm{op}(q)} - 
\lvert f^{(d-1)}(x) \rvert_{\mathrm{op}(q)}\big|
 \, \le \,
\lvert f^{(d-1)}(x+h) - f^{(d-1)}(x) \rvert_{\mathrm{op}(q)}\\
 = \, &
\sup \{ (f^{(d-1)}(x+h) - f^{(d-1)}(x))[v_1, \ldots, v_{d-1}] 
\colon v_1, \ldots, v_{d-1} \in \S_p^{n-1} \},
\end{align*}
while, by the Taylor expansion,
$$
(f^{(d-1)}(x+h) - f^{(d-1)}(x))[v_1, \ldots, v_{d-1}] = 
f^{(d)}(x)[v_1, \ldots, v_{d-1}, h] + o(|h|_\infty)
$$
as $h \to 0$. Here, the $o$-term can be bounded by a quantity which is 
independent of $v_1, \ldots, v_{d-1} \in \S_p^{n-1}$. As a consequence,
\begin{align*}
 &
\limsup_{h \to 0} 
\frac{|\lvert f^{(d-1)}(x+h) \rvert_{\mathrm{op}(q)} - \lvert f^{(d-1)}(x) \rvert_{\mathrm{op}(q)}|}{|h|_p}\\
 \le \; &\sup \{ f^{(d)}(x)[v_1, \ldots, v_{d-1}, v_d] \colon v_1, \ldots, v_d \in \S_p^{n-1}\}
 = \lvert f^{(d)}(x) \rvert_{\mathrm{op}(q)}.\qedhere
\end{align*}
\end{proof}

\begin{proof}[Proof of Theorem \ref{HOLqRn}]
    By \cite[Th.\ 6.1]{Bob10}, the $\mathrm{LS}_q$-inequality implies the moment (or $L^r$-norm) inequality
    \begin{equation}\label{LSqLp}
    \lVert g \rVert_r^q \le \lVert g \rVert_q^q + \sigma^q \frac{(\frac{r}{q})^{q-1}-1}{q-1} \lVert |\nabla g|_q \rVert_r^q,
    \end{equation}
    valid for any $r \ge q$ and any locally Lipschitz function $g \colon \R^n \to \R$. Applying \eqref{LSqLp} to $g= |f^{(k)}|_{\mathrm{op}(q)}$ together with Lemma \ref{itGradHess} yields
    \[
    \lVert f^{(k)} \rVert_{\mathrm{op}(q),r}^q \le \lVert f^{(k)} \rVert_{\mathrm{op}(q),q}^q + \sigma^q \frac{(\frac{r}{q})^{q-1}-1}{q-1} \lVert f^{(k+1)} \rVert_{\mathrm{op}(q),r}^q
    \]
    for any $k=1, \ldots,d-1$. Altogether, this yields
    \begin{align*}
        \lVert f \rVert_r^q &\le \lVert f \rVert_q^q + \sum_{k=1}^{d-1} \Big(\sigma^q \frac{(\frac{r}{q})^{q-1}-1}{q-1}\Big)^k \lVert f^{(k)} \rVert_{\mathrm{op}(q),q}^q\\
        &\hspace{1.5cm} + \Big(\sigma^q \frac{(\frac{r}{q})^{q-1}-1}{q-1}\Big)^d \lVert f^{(d)} \rVert_{\mathrm{op}(q),\infty}^q\\
        &\le \frac{4\sigma^q}{\log(2)} \lVert f^{(1)} \rVert_{\mathrm{op}(q),q}^q + \sum_{k=1}^{d-1} \Big(\sigma^q \frac{(\frac{r}{q})^{q-1}-1}{q-1}\Big)^k \lVert f^{(k)} \rVert_{\mathrm{op}(q),q}^q\\
        &\hspace{1.5cm} + \Big(\sigma^q \frac{(\frac{r}{q})^{q-1}-1}{q-1}\Big)^d \lVert f^{(d)} \rVert_{\mathrm{op}(q),\infty}^q,
    \end{align*}
    where the second step follows from the $\mathrm{SG}_q$-inequality \eqref{SGq}. Noting that
    \[
    \frac{(\frac{r}{q})^{q-1}-1}{q-1} + \frac{4}{\log(2)} \le \frac{4}{\log(2)q^{q-1}(q-1)}r^{q-1} \le \frac{4}{\log(2)(q-1)^q}r^{q-1},
    \]
    we arrive at the estimate
    \begin{align}\label{iteratedBd}
    \begin{split}
        \lVert f \rVert_r^q &\le \sum_{k=1}^{d-1} \Big(\frac{4}{\log(2)} \Big(\frac{\sigma}{q-1}\Big)^q r^{q-1}\Big)^k \lVert f^{(k)} \rVert_{\mathrm{op}(q),q}^q\\
        &\hspace{0.5cm} + \Big(\frac{4}{\log(2)} \Big(\frac{\sigma}{q-1}\Big)^q r^{q-1}\Big)^d \lVert f^{(d)} \rVert_{\mathrm{op}(q),\infty}^q.
        \end{split}
    \end{align}

    To see (1), plugging the assumptions into \eqref{iteratedBd} leads to
    \begin{align*}
    \lVert f \rVert_r^q &\le \sigma^{qd} \Big(\frac{4}{\log(2) (q-1)^q}\Big)^d \sum_{k=1}^d r^{(q-1)k}\\
    &\le \sigma^{qd} \Big(\frac{4}{\log(2) (q-1)^q}\Big)^d \frac{1}{1-r^{1-q}} r^{(q-1)d}\\
    &\le \sigma^{qd} \Big(\frac{4}{\log(2) (q-1)^q} \cdot \frac{q^{q-1}}{q^{q-1}-1}\Big)^d r^{(q-1)d}
    \end{align*}
    and hence
    \begin{align*}
    \lVert f \rVert_r &\le \Big(\sigma^p \frac{4^{p-1}}{\log^{p-1}(2) (q-1)^p} \cdot \Big(\frac{q^{q-1}}{q^{q-1}-1}\Big)^{p-1}r\Big)^{d/p}\\
    &= \Big(\sigma^p \frac{4^{p-1}p(p-1)^p}{(\log(2)(p^{1/(p-1)}-(p-1)^{1/(p-1)}))^{p-1}} r\Big)^{d/p}
    \end{align*}
    for any $r \ge q$. If $r \le q$, we obtain
    \begin{align*}
    \lVert f \rVert_r \le \lVert f \rVert_q &\le \Big(\sigma^p \frac{4^{p-1}p(p-1)^p}{(\log(2)(p^{1/(p-1)}-(p-1)^{1/(p-1)}))^{p-1}} q\Big)^{d/p}\\
    &= \Big(\sigma^p \frac{4^{p-1}p^2(p-1)^{p-1}}{(\log(2)(p^{1/(p-1)}-(p-1)^{1/(p-1)}))^{p-1}}\Big)^{d/p}.
    \end{align*}
    Hence, for any $m\ge 1$ it holds that
    \[
    \lVert |f|^{p/d} \rVert_m = \lVert f \rVert_{pm/d}^{p/d} \le \gamma m,\qquad \gamma = \sigma^p \frac{4^{p-1}p^2(p-1)^{p-1}}{(\log(2)(p^{1/(p-1)}-(p-1)^{1/(p-1)}))^{p-1}}.
    \]
    Now it follows by Lemma \ref{ConcLemma} (1) applied to $g=|f|^{p/d}$ that $\int \exp(c'|f|^{p/d}) d\mu \le 2$ for $c'=1/(2\gamma e)$, which proves (1).

    To show (2), taking $q$-th roots in \eqref{iteratedBd} leads to
    \begin{align*}
        \lVert f \rVert_r &\le \sum_{k=1}^{d-1} \Big(\frac{4}{\log(2)}\Big)^{k/q} \Big(\frac{\sigma}{q-1}\Big)^k r^{k/p} \lVert f^{(k)} \rVert_{\mathrm{op}(q),L^q}\\
        &\hspace{0.5cm} + \Big(\frac{4}{\log(2)}\Big)^{d/q} \Big(\frac{\sigma}{q-1}\Big)^d r^{d/p} \lVert f^{(d)} \rVert_{\mathrm{op}(q),L^\infty}.
    \end{align*}
    Applying Lemma \ref{ConcLemma} (2) with $L=d$ and $\ell=1$, we may bound $\mu(|f| \ge t)$ by
    \[
    2 \exp\Big\{- \frac{\log(2)C'}{qd^pe^p} \min\Big(\min_{k=1, \ldots,d-1} \Big(\frac{t}{\lVert f^{(k)} \rVert_{\mathrm{op}(q),q}}\Big)^{p/k}, \Big(\frac{t}{\lVert f^{(d)} \rVert_{\mathrm{op}(q),\infty}}\Big)^{p/d}\Big)\Big\},
    \]
    where
    \[
    C'= \frac{\log^{p/q}(2)(q-1)^{p}}{4^{p/q}\sigma^p} = \frac{\log(2)^{p-1}}{4^{p-1}(p-1)^p\sigma^p}.
    \]
    This finishes the proof.
\end{proof}

Let us briefly discuss another subtlety. As we have mentioned in the discussion below \eqref{SGq}, $\mathrm{LS}_q(\sigma^q)$ implies $\mathrm{LS}_2(\sigma^2)$ for $q \ge 2$ (up to constant), so one may think of applying standard log-Sobolev results for measures satisfying $\mathrm{LS}_q$-inequalities (which would lead to weaker tails but some benefits like better tractable norms in the concentration inequalities). However, recall that such implications are only valid if one does not change the modulus of gradient. In particular, starting with an $\mathrm{LS}_q$-inequality on $\R^n$, one has $\Gamma f = |\nabla f|_q$, which will lead to an $\mathrm{LS}_2$-inequality with respect to $|\nabla f|_q$, which is not the same as the standard log-Sobolev inequality.

To conclude this section, let us provide an example. One of the simplest situations in which higher order concentration bounds apply are fluctuations of quadratic forms. The classical result in this context is the famous Hanson--Wright inequality, which states that if $X=(X_1, \ldots,X_n)$ is a random vector in $\R^n$ with independent components $X_i$ such that $\E X_i=0$ and $\lVert X_i \rVert_{\Psi_2} := \inf \{c > 0 \colon \E \exp(X_i^2/c^2) \le 2\} \le K$ and $A$ is an $n \times n$ matrix, then
\[
\P(|X^TAX - \E X^TAX| \ge t) \le 2\exp\Big\{-c\min \Big(\Big(\frac{t}{K^2\lVert A \rVert_\mathrm{HS}}\Big)^2, \frac{t}{K^2\lVert A \rVert_\mathrm{op}}\Big)\Big\}
\]
for any $t \ge 0$. Often, one further assumes $\mathrm{Var}(X_i)=1$, in which case $\E X^TAX = \mathrm{tr}(A)$. Summing up, the Hanson--Wright inequality yields that quadratic forms in independent sub-Gaussian random variables admit two levels of tail decay: for small values of $t$, a sub-Gaussian one which scales with the Hilbert--Schmidt norm of $A$, and for large values of $t$ a subexponential one which scales with the operator norm of $A$.

Various extensions and generalizations of the Hanson--Wright inequality are known. By the higher order concentration results in this note, we can easily add another one which yields concentration bounds for quadratic forms in presence of an $\mathrm{LS}_q$-inequality.

\begin{proposition}\label{HWI}
    Let $\mu$ be a probability measure on $\R^n$ which satisfies an $\mathrm{LS}_q$-inequality as well as $\int x_i d\mu = 0$ for any $i=1, \ldots,n$, and define $f\colon \R^n \to \R$ by $f(x) := x^TAx$, where $A=(a_{ij})_{ij}$ is a symmetric $n \times n$ matrix. Then,
    \[
    \mu\Big(\Big|f - \int f(x)d\mu\Big| \ge t\Big) \le 2\exp\Big\{-c_p\min \Big(\Big(\frac{t}{\sigma^2\lVert A \rVert_{\mathrm{HS}(q)}}\Big)^p, \Big(\frac{t}{\sigma^2\lVert A \rVert_{\mathrm{op}(q)}}\Big)^{p/2}\Big)\Big\}
    \]
    for any $t \ge 0$.
\end{proposition}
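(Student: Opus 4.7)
The plan is to apply Theorem \ref{HOLqRn}(2) with $d=2$ to the centered function $\tilde f := f - \int f\, d\mu$, which shares all derivatives of order $\ge 1$ with $f$. The Euclidean gradient is $\nabla \tilde f(x) = 2Ax$ and the Hessian is the constant matrix $\tilde f^{(2)}(x) \equiv 2A$. Thus $\lVert \tilde f^{(2)} \rVert_{\mathrm{op}(q),\infty} = 2\, \lVert A \rVert_{\mathrm{op}(q)}$ is immediate, while $\lVert \tilde f^{(1)} \rVert_{\mathrm{op}(q),q}^{\,q} = 2^q \int |Ax|_q^q\, d\mu$ still has to be controlled.

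For this I would exploit the $\mathrm{SG}_q$-inequality \eqref{SGq} coordinatewise. For each $g_j(x) := (Ax)_j = \sum_i a_{ji}\, x_i$, the mean-zero assumption gives $\int g_j\, d\mu = 0$, so
\[
\int |g_j|^q\, d\mu \;\le\; \frac{4\sigma^q}{\log 2} \int |\nabla g_j|_q^q\, d\mu \;=\; \frac{4\sigma^q}{\log 2} \sum_i |a_{ji}|^q,
\]
since $\nabla g_j$ is just the $j$-th row of $A$ regarded as a constant vector. Summing over $j$ yields
\[
\int |Ax|_q^q\, d\mu \;\le\; \frac{4\sigma^q}{\log 2}\, \lVert A \rVert_{\mathrm{HS}(q)}^{\,q},
\]
where I identify $\lVert A \rVert_{\mathrm{HS}(q)} := \bigl(\sum_{i,j} |a_{ij}|^q\bigr)^{1/q}$ (which recovers the classical Hilbert--Schmidt norm for $q=2$). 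Hence $\lVert \tilde f^{(1)} \rVert_{\mathrm{op}(q),q} \le C_p\, \sigma\, \lVert A \rVert_{\mathrm{HS}(q)}$ for some $C_p$ depending only on $p$.

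Plugging these two estimates into Theorem \ref{HOLqRn}(2) with $d=2$ collapses the inner minimum to the single index $k=1$. Substituting, the prefactor $\sigma^{-p}$ combines with $\lVert \tilde f^{(1)} \rVert_{\mathrm{op}(q),q}^{-p} \lesssim (\sigma\, \lVert A \rVert_{\mathrm{HS}(q)})^{-p}$ in the first slot to produce $(\sigma^2 \lVert A \rVert_{\mathrm{HS}(q)})^{-p}\, t^p$, while it combines with $\lVert \tilde f^{(2)} \rVert_{\mathrm{op}(q),\infty}^{-p/2} \lesssim \lVert A \rVert_{\mathrm{op}(q)}^{-p/2}$ in the second slot to produce $(\sigma^2 \lVert A \rVert_{\mathrm{op}(q)})^{-p/2}\, t^{p/2}$, matching the claim once all numerical factors are absorbed into $c_p$.

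I do not anticipate a genuine obstacle. Both derivative bounds are elementary and the heavy lifting has been carried out in Theorem \ref{HOLqRn}(2). The only mildly delicate point is the coordinatewise use of $\mathrm{SG}_q$, which is what produces the natural $\ell^q$-Hilbert--Schmidt norm and which is where the mean-zero marginal assumption enters essentially; everything else is bookkeeping of powers of $\sigma$ across the two tail regimes.
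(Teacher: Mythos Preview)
Your proposal is correct and follows essentially the same approach as the paper: compute $\nabla f = 2Ax$ and $f'' = 2A$, apply the $\mathrm{SG}_q$-inequality coordinatewise to the centered linear functionals $(Ax)_j$ to bound $\lVert f^{(1)}\rVert_{\mathrm{op}(q),q}$ by $c_p\,\sigma\,\lVert A\rVert_{\mathrm{HS}(q)}$, and then invoke Theorem~\ref{HOLqRn}(2) with $d=2$. The paper's proof is identical in structure and detail.
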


Assuming $A$ to be symmetric is a matter of convenience, and the result can easily be extended to non-symmetric matrices $A$.

\begin{proof}[Proof of Proposition \ref{HWI}]
    Note that
    \[
    \partial_i f(x) = 2\sum_{j=1}^n a_{ij}x_j, \qquad \partial_{ij} f(x) = 2a_{ij}.
    \]
    In particular, $f''(x) = 2A$. Moreover, each $\partial_i f$ is centered, so that by the $\mathrm{SG}_q$-inequality \eqref{SGq} (applied to each summand $\partial_i f$),
    \[
    \int \sum_{i=1} ^n |\partial_i f|^q d\mu \le \frac{4\sigma^q}{\log(2)} \int \sum_{i,j=1}^n 2^q|a_{ij}|^q d\mu,
    \]
    thus $\lVert f^{(1)} \rVert_{\mathrm{op}(q),q} \le c_p \sigma \lVert A \rVert_{\mathrm{HS}(q),q}$. Plugging into Theorem \ref{HOLqRn} (2) for $d=2$ directly leads to the result.
\end{proof}

\section{Log-Sobolev-type inequalities and concentration on \texorpdfstring{$\ell_p^n$}{l\_p\^n}-spheres}\label{LSISphSec}

\subsection{Background and related work}\label{RelWork}
For $p \in [1,\infty)$, $n \in \N$, and $x=(x_1,\ldots,x_n)$, let $\lvert x \rvert_p := (\sum_{i=1}^n |x_i|^p)^{1/p}$ the $\ell_p^n$-norm of $x$ and $\S_p^{n-1} := \{ x \in \R^n \colon \lvert x \rvert_p = 1\}$ the unit sphere with respect to $\ell_p^n$. On $\S_p^{n-1}$, there are two classical probability measures. The first one is the surface measure $\nu_{p,n}$, defined as the normalized $(n-1)$-dimensional Hausdorff measure on $\S_p^{n-1}$. The second one is the cone probability measure $\mu_{p,n}$, defined by
\[
\mu_{p,n}(A) := \frac{\vol_n(\{t\theta \colon t \in [0,1], \theta \in A\})}{\vol_n(\mathbb{B}_p^n)}
\]
for any Borel set $A \subset \S_p^{n-1}$, where $\mathbb{B}_p^n$ is the $\ell_p^n$-unit ball in $\R^n$. By definition, $\mu_{p,n}(A)$ is the normalized volume of the intersection of $\mathbb{B}_p^n$ with the cone which intersects $\S_p^{n-1}$ in $A$. For $p=1$ and $p=2$, $\nu_{p,n}$ and $\mu_{p,n}$ agree. See \cite[Sect.\ 9.3.1]{PTT19} for details.

Let us collect some previous results related to concentration of measure on $\ell_p^n$-spheres. The first bounds go back to Gromov and Milman \cite{GM87} in the context of isoperimetric inequalities and concentration results for uniformly convex Banach spaces. In the case of $\ell_n^p$-spheres, their results (as restated in \cite[Th.\ 2.16]{Led01}) yield that if $p > 1$, then
\[
I_{\mu_{p,n}}^{|\cdot|_p}(r) = C_pe^{-c_pnr^{\max\{2,p\}}}
\]
for suitable constants $C_p,c_p>0$. Here, for any metric space $(X,d)$ and any Borel probability measure $\mu$,
\[
I_\mu^d(r) := \sup\{1-\mu(A_r) \colon \mu(A) \ge 1/2\}
\]
with $A_r := \{x \in X \colon d(x,A)<r\}$ is the so-called concentration function. By \cite[Prop.\ 1.7\&1.8]{Led01}, this result may be rewritten as
\begin{equation}\label{Lipschitz1}
    \mu_{p,n}\Big(\Big|f - \int f d\mu_{p,n}\Big| \ge t\Big) \le C_p \exp\{ - c_p n t^{\max\{2,p\}}\}
\end{equation}
for every $1$-Lipschitz function $f \colon \S_p^{n-1} \to \R$ for suitable constants $C_p,c_p>0$.

As pointed out in \cite{Led01}, it is possible to reprove these results by an approach which relates the cone measure to probability measures with densities of the form $e^{-c|x|_p^p}$ on $\R^n$, a method we will discuss in more detail at a later point. In \cite{AV00}, such an approach was used especially to prove analogous results for the (hitherto open) case $p=1$ (more generally, for $1 \le p \le 2$). As shown by Schechtman and Zinn \cite{SZ00}, the same results hold if $\S_p^{n-1}$ is equipped with the Euclidean metric $|\cdot|_2$. A different type of question was addressed by the same authors in \cite{SZ90}, where they proved concentration bounds for the $\ell_{p'}^n$-norm of vector on the $\ell_p^n$-sphere if $1 \le p < p' < \infty$. Note that all these results hold for the cone measure $\mu_{p,n}$ (in this context, remark that the ``normalized surface measure'' from \cite[Eq.\ (2.26)]{Led01} actually equals the cone measure).

The relation between the cone and the surface measure was investigated by Naor and Romik \cite{NR03,Nao07}. In particular, it holds that
\[
\lVert \mu_{p,n} - \nu_{p,n} \rVert_\mathrm{TV} := \sup\big\{ |\mu_{p,n}(A) - \nu_{p,n}(A)| \colon A \in \mathcal{B}(\S_p^{n-1})\big\} = \frac{C_p}{\sqrt{n}},
\]
where $\mathcal{B}(\S_p^{n-1})$ denotes the Borel $\sigma$-algebra on $\S_p^{n-1}$. In its sharpest form as shown in \cite{Nao07}, this result is an application of refined concentration bounds for $\ell_{p'}^n$-norms on the $\ell_p^n$-sphere, continuing the line of research from \cite{SZ90}. Moreover, $\nu_{p,n}$ is absolutely continuous with respect to $\mu_{p,n}$ and vice versa. By means of \cite{Nao07}, results for the cone measure can be transferred to the surface measure. For instance, it holds that
\begin{equation}\label{CSN}
    I_{\sigma_{p,n}}^d(r) \le C_p I_{\mu_{p,n}}^d\Big(\frac{r}{2}\Big)\Big[1 + \frac{1}{\sqrt{n}}\Big|\log I_{\mu_{p,n}}^d\Big(\frac{r}{2}\Big)\Big|^{1-\min\{1/2,1/p\}}\Big]
\end{equation}
for every $r > 0$, where $d$ is any metric which induces the standard topology on $\S_p^{n-1}$.

Finally, as a sort of ``asymptotic'' concentration bounds, let us mention that there are counterparts of the concentration inequalities from \cite{SZ90} in the area of large deviation principles. Corresponding results are due to Kabluchko, Prochno and Th\"{a}le \cite{KPT19}.

\subsection{\texorpdfstring{$\mathrm{LS}_q$}{LS\_q}-inequalities}\label{sec:LSILqSubS}
Our main goal in this section is to prove $\mathrm{LS}_q$-inequalities for the cone measure on $\S_p^{n-1}$. To this end, let us recall a simple proof of \eqref{LSISphere} which can be found in \cite[Ch.\ 9.4]{BCG23}. It makes use of the fact that $\nu_{2,n}$ (or, equivalently, $\mu_{2,n}$) can represented as $Z/\lvert Z \rvert_2$ for a random vector $Z = (Z_1,\ldots,Z_n) \in \R^n$ with i.i.d.\ standard Gaussian (i.\,e., $\mathcal{N}(0,1)$) components. From a different perspective, this means decomposing the $n$-dimensional standard Gaussian distribution into a radial and a directional component which are independent of each other.

Now, one can easily verify that given any smooth function $f$ in a neighbourhood of $\S^{n-1}$, the smooth extension $u(x) := f(x/\lvert x \rvert_2)$ of $f$ satisfies $\nabla f(x) = \lvert x \rvert_2^{-1} \nabla_S f(x/\lvert x \rvert_2)$, where $\nabla_S$ denotes the spherical (or intrinsic) gradient on $f$. Therefore, applying the Gaussian log-Sobolev inequality, we arrive at
\[
\mathrm{Ent}_{\mathcal{N}(0,1)} (u^2) \le 2 \E\lvert Z \rvert_2^{-2} \int_{\S^{n-1}} |\nabla_S f|_2^2 d\mu_{2,n}.
\]
Noting that $\E\lvert Z \rvert_2^{-2} = n-2$, we arrive at \eqref{LSISphere} with a slightly weaker but asymptotically correct constant $1/(n-2)$.

This proof will serve as our guideline for proving $\mathrm{LS}_q$-inequalities for the cone measure on $\ell_p^n$-spheres with arbitrary $p \ge 2$. (Note that in \cite[Ch.\ 5]{BL00}, a similar route has been chosen to establish $\mathrm{LS}_q$-inequalities on $\ell_p^n$-balls.) In view of the Riemannian manifold structure of $\ell_p^n$-spheres, one might wonder whether the Bakry--Emery techniques and their $\Gamma$-calculus extensions might lead to log-Sobolev-type inequalities. However, one may check that the generalized curvature bounds do not apply to $\ell_p^n$-spheres, cf.\ Appendix \ref{sec:Geom}.

First, as in the case of $p=2$ we may renormalize a random vector of i.i.d.\ $p$-generalized Gaussian random variables by its $\ell_p^n$-norm to arrive at the cone measure $\mu_{p,n}$. This fact goes back to \cite{SZ90,RR91}. We quote it as stated in \cite[Th.\ 9.3.2]{PTT19}.

\begin{proposition}\label{p-GaussCone}
    Let $p \ge 1$, and let $Z = (Z_1,\ldots,Z_n)$ be a random vector with i.i.d.\ components $Z_i \sim \mathcal{N}_p$. Then, $Z/\lvert Z \rvert_p \sim \mu_{p,n}$, and $Z/\lvert Z \rvert_p$ is independent of $\lvert Z \rvert_p$.
\end{proposition}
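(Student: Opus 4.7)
The strategy is to reduce the result to a polar-type integration formula adapted to the $\ell_p^n$-norm, exploiting the fact that the density of $Z$ is a function of $\lvert x \rvert_p$ alone. Indeed, by independence, $Z$ has Lebesgue density $c_p^n \exp\{-\lvert x \rvert_p^p/p\}$ on $\R^n$, which depends on $x$ only through its $\ell_p^n$-norm. In the Euclidean case $p=2$, the corresponding computation is classical and yields the standard decomposition of a Gaussian vector into a $\chi$-distributed radius and an independent uniform direction; the same blueprint should work in general once one has the correct polar formula for $\lvert\cdot\rvert_p$.

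Thus the first step is to prove that for every nonnegative Borel function $F \colon \R^n \to \R$,
\[
\int_{\R^n} F(x)\, dx = n\vol_n(\mathbb{B}_p^n) \int_0^\infty r^{n-1} \int_{\S_p^{n-1}} F(r\theta)\, d\mu_{p,n}(\theta)\, dr.
\]
I would verify this first on indicators of the $\ell_p^n$-shells $\{r\theta : r \in [a,b],\, \theta \in A\}$ with $A \in \mathcal{B}(\S_p^{n-1})$. By the very definition of the cone measure together with the $n$-homogeneity of Lebesgue measure under the scaling $x \mapsto tx$, the volume of such a shell equals $(b^n-a^n)\vol_n(\mathbb{B}_p^n)\mu_{p,n}(A)$, which matches the right-hand side via $\int_a^b r^{n-1}\, dr = (b^n-a^n)/n$. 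A standard monotone class / monotone convergence argument then extends the identity to all nonnegative measurable $F$.

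With this formula in hand, I would take bounded measurable $\phi\colon [0,\infty) \to \R$ and $\psi\colon \S_p^{n-1} \to \R$ and apply the polar formula to $F(x) = c_p^n \exp\{-\lvert x \rvert_p^p/p\}\, \phi(\lvert x \rvert_p)\, \psi(x/\lvert x \rvert_p)$. Since the exponential factor depends only on $r = \lvert x \rvert_p$, the resulting integral factorizes as
\[
\E\bigl[\phi(\lvert Z \rvert_p)\psi(Z/\lvert Z \rvert_p)\bigr] = \Bigl(c_p^n n\vol_n(\mathbb{B}_p^n) \int_0^\infty r^{n-1} e^{-r^p/p}\phi(r)\, dr\Bigr) \Bigl(\int_{\S_p^{n-1}} \psi\, d\mu_{p,n}\Bigr).
\]
Taking $\phi \equiv 1$ and letting $\psi$ range over bounded Borel functions identifies the law of $Z/\lvert Z \rvert_p$ as $\mu_{p,n}$ (the radial constant is forced to equal $1$ by also taking $\psi \equiv 1$), and the product structure of the right-hand side simultaneously yields independence of $\lvert Z \rvert_p$ and $Z/\lvert Z \rvert_p$.

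The only genuine obstacle is the polar decomposition formula; everything else is bookkeeping. In the Euclidean case $p=2$ this formula is entirely standard, and for general $p \ge 1$ it is a direct consequence of the scaling definition of $\mu_{p,n}$, so no essentially new difficulty arises.
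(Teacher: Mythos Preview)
Your argument is correct and is essentially the standard proof of this fact. Note, however, that the paper does not actually prove this proposition: it is stated without proof and attributed to \cite{SZ90,RR91}, quoted in the form of \cite[Th.\ 9.3.2]{PTT19}. Your polar-decomposition approach is precisely the one underlying those references, so there is nothing to compare against in the paper itself.
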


Moreover, as demonstrated in Proposition \ref{LSIp-Gauss}, the $p$-generalized Gaussian distribution satisfies an $\mathrm{LS}_q$-inequality for $p \in [2,\infty)$. Therefore, to derive $\mathrm{LS}_q$-inequalities on $\S_p^{n-1}$, it remains to recall some elementary facts about their manifold structure. For a more detailed presentation of these results see e.\,g.\ \cite[Sect.\ 2\&3]{Sat23}.

For any $p \in (1,\infty)$, $\S_p^{n-1}$ is a (Riemannian) submanifold of $\R^n$ given by the zero set of the function $F_p(x) := |x_1|^p + \ldots + |x_n|^p - 1$. It is of class $\mathcal{C}^\infty$ if $p$ is an even integer, of class $\mathcal{C}^{p-1}$ if $p$ is an odd integer and of class $\mathcal{C}^{\lfloor p \rfloor}$ if $p$ is not an integer.

We have $\nabla F_p(x) = p(\sign(x_1)|x_1|^{p-1}, \ldots, \sign(x_n)|x_n|^{p-1})^T =: px^{p-1}$, where $\mathrm{sign}(x)$ denotes the sign function, and thus, the tangent space in $\theta = \theta_p \in \S_p^{n-1}$ is given by
\[
T_\theta\S_p^{n-1} = \mathrm{ker} F_p'(\theta) = \{ x \in \R^n \colon \langle x, \theta^{p-1} \rangle = 0\}.
\]
On $T_\theta\S_p^{n-1}$, the Riemannian metric is given as the metric induced by the standard Euclidean metric (or inner product).

In particular, let $f$ be a smooth function defined on a neighbourhood of $\S_p^{n-1}$, and let $\theta \in \S_p^{n-1}$. Denote by $P_\theta$ the orthogonal projection onto $T_\theta\S_p^{n-1}$. Then, the intrinsic gradient of $f$ at $\theta$ is given by
\[
\nabla_{\S_p^{n-1}} f(\theta) = \nabla_S f(\theta) = P_\theta \nabla f(\theta),
\]
where $\nabla f(\theta)$ denotes the usual (Euclidean) gradient of $f$ in $\theta$. If $g$ is another smooth function defined on a neighbourhood of $\S_p^{n-1}$ such that $f$ and $g$ coincide on $\S_p^{n-1}$, then $\nabla_S f(\theta) = \nabla_S g(\theta)$ for all $\theta \in \S_p^{n-1}$.

\begin{satz}\label{LSICone}
For any real $p \ge 2$, $\mu_{p,n}$ satisfies an $\mathrm{LS}_q$-inequality with constant of order $n^{-1/(p-1)}$. More precisely, for any smooth enough $f \colon \S_p^{n-1} \to \R$, it holds that
\begin{align*}
\mathrm{Ent}_{\mu_{p,n}}(|f|^q) &\le 4^q q^{q-1} \frac {\Gamma(\frac{n-q}{p})}{p^{q/p}\Gamma(\frac n p)} \int_{\S_p^{n-1}} \lvert \nabla_{S} f(\theta) \rvert_q^q d \mu_{p,n}\\
&\le 3 \cdot 4^q q^{q-1} n^{-\frac{1}{p-1}} \int_{\S_p^{n-1}} \lvert \nabla_S f(\theta) \rvert_q^q d \mu_{p,n},
\end{align*}
where the second inequality is valid for any $n \ge 3$. The same inequalities remain true if we assume $f$ to be defined and smooth in a neighbourhood of $\S_p^{n-1}$ and replace $\nabla_S f$ by the usual (non-intrinsic) gradient $\nabla f$.
\end{satz}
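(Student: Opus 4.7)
The strategy, modelled on the $p=2$ derivation sketched before the statement, is to transfer the $\mathrm{LS}_q$-inequality for the $p$-generalized Gaussian $\mathcal{N}_p^{\otimes n}$ from Proposition \ref{LSIp-Gauss} to the cone measure $\mu_{p,n}$ via the polar representation $Z \mapsto (|Z|_p, Z/|Z|_p)$, exploiting the independence asserted in Proposition \ref{p-GaussCone}.

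First, given a sufficiently smooth $f\colon\S_p^{n-1}\to\R$, extend it to the (positively $0$-homogeneous) function $u(x):=f(x/|x|_p)$ on $\R^n\setminus\{0\}$. Since $u$ agrees with $f$ on the sphere and $Z/|Z|_p \sim \mu_{p,n}$, the entropy on the left transfers directly:
\[
\mathrm{Ent}_{\mathcal{N}_p^{\otimes n}}(|u|^q) = \mathrm{Ent}_{\mu_{p,n}}(|f|^q).
\]
Applying Proposition \ref{LSIp-Gauss} to $u$ yields
\[
\mathrm{Ent}_{\mu_{p,n}}(|f|^q) \le 2^q q^{q-1}\int |\nabla u(x)|_q^q \, d\mathcal{N}_p^{\otimes n}(x),
\]
and the task reduces to controlling this right-hand integral.

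For that, the key observation is that $0$-homogeneity of $u$ gives $(-1)$-homogeneity of $\nabla u$, so $|\nabla u(x)|_q = |x|_p^{-1} |\nabla u(x/|x|_p)|_q$. Polar-decomposing $Z = |Z|_p \cdot (Z/|Z|_p)$ and using independence, the right-hand side factors as
\[
2^q q^{q-1}\, \mathbb{E} |Z|_p^{-q} \int_{\S_p^{n-1}} |\nabla u(\theta)|_q^q \, d\mu_{p,n}(\theta).
\]
The expectation $\mathbb{E}|Z|_p^{-q}$ is computed directly: since $|Z_i|^p \sim \mathrm{Gamma}(1/p,1/p)$, $|Z|_p^p \sim \mathrm{Gamma}(n/p,1/p)$, so a standard Gamma-integral gives
\[
\mathbb{E}|Z|_p^{-q} = \frac{\Gamma((n-q)/p)}{p^{q/p}\,\Gamma(n/p)}.
\]
Finally, $|\nabla u(\theta)|_q$ must be compared with $|\nabla_S f(\theta)|_q$ (or the Euclidean gradient of a smooth extension). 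Using the tangent space $T_\theta\S_p^{n-1}=\{v:\langle v,\theta^{p-1}\rangle=0\}$, writing $\nabla u(\theta) = \nabla_S f(\theta) - \theta^{p-1}\langle \nabla_S f(\theta), \theta\rangle$ (since $\langle \nabla u(\theta),\theta\rangle=0$ by homogeneity), and applying the triangle inequality together with $|\theta^{p-1}|_q = (\sum_i|\theta_i|^{(p-1)q})^{1/q} = 1$ (because $(p-1)q = p$) and $|\langle \nabla_S f(\theta),\theta\rangle| \le |\nabla_S f(\theta)|_q |\theta|_p = |\nabla_S f(\theta)|_q$, one gets the clean bound
\[
|\nabla u(\theta)|_q \le 2|\nabla_S f(\theta)|_q.
\]
The same estimate works with any smooth extension $\tilde f$ in place of $\nabla_S f$, by the analogous identity $\nabla u(\theta) = \nabla \tilde f(\theta) - \theta^{p-1}\langle \nabla \tilde f(\theta),\theta\rangle$. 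Raising to the $q$-th power contributes a factor $2^q$, producing the claimed constant $4^q q^{q-1}\,\Gamma((n-q)/p)/(p^{q/p}\Gamma(n/p))$.

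The second (cleaner) form requires the asymptotic bound
\[
\frac{\Gamma((n-q)/p)}{p^{q/p}\,\Gamma(n/p)} \le 3\, n^{-1/(p-1)} \qquad (n\ge 3),
\]
which, since $q/p = 1/(p-1)$, follows from a standard Gamma-ratio estimate of Kershaw/Wendel type, $\Gamma(x)/\Gamma(x+s) \le x^{-s}$ for $s\in(0,1)$ applied at $x=(n-q)/p$ with $s = q/p$, combined with an elementary check that the resulting multiplicative constant is at most $3$ for all $n\ge 3$. The main obstacle here is not conceptual but bookkeeping: verifying the constant $3$ uniformly down to the small value $n=3$, which can be done by combining the monotonicity of $\Gamma(x+s)/(x^s\Gamma(x))$ with a direct numerical check at the boundary.
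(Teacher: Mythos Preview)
Your proof plan is correct and follows essentially the same route as the paper: extend $f$ to the $0$-homogeneous $u(x)=f(x/|x|_p)$, apply the $\mathrm{LS}_q$-inequality for $\mathcal{N}_p^{\otimes n}$, factor the gradient integral via the independence of $|Z|_p$ and $Z/|Z|_p$, evaluate $\mathbb{E}|Z|_p^{-q}$ as a Gamma ratio, and bound $|\nabla u(\theta)|_q\le 2|\nabla_S f(\theta)|_q$ (resp.\ $2|\nabla f(\theta)|_q$) using $|\theta^{p-1}|_q=|\theta|_p=1$ together with H\"older. The paper carries out the identity $\nabla u(\theta)=\nabla_S f(\theta)-\langle\nabla_S f(\theta),\theta\rangle\,\theta^{p-1}$ by a more explicit projection computation and handles the Gamma-ratio bound via a lemma citing \cite{Jam13}, but these are only minor differences in execution.
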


For $p=2$, Theorem \ref{LSICone} recovers the usual logarithmic Sobolev inequality on the sphere with Sobolev constant of order $n^{-1}$. As for the choice of the gradient, while the intrinsic gradient $\nabla_S f$ is the natural gradient of a smooth function $f$ on $\S_p^{n-1}$ and independent of any sort of smooth extension of $f$ onto $\R^n$, working with the usual gradient $\nabla f$ can nevertheless turn out to be convenient. This has two main reasons: first, we will especially be interested in higher order situations in this note, and even for second order derivatives, calculating an intrinsic ``Hessian'' becomes remarkably involved if $p \ne 2$. Moreover, in general it is not even clear whether $|\nabla_S f|_q \le |\nabla f|_q$ (unless $p =2$).

\begin{proof}
Let $f$ be a smooth function on some open neighbourhood of $\S_p^{n-1}$. To define a smooth extension of $f$ onto $\R^n$, let $r := r_p := \lvert x \rvert_p$ for any $x \in \R^n$, $\theta := \theta_p := xr_p^{-1}$ for any $x \ne 0$, and set $u(x) := f(r^{-1} x) = f(\theta)$ for $x \ne 0$, so that $u$ is Lipschitz outside a neighbourhood of the origin. Note that
\[
\frac{\partial}{\partial x_j} (x_1^p + \ldots + x_n^p)^{-1/p} = - \sign(x_j)|x_j|^{p-1} r^{-(p+1)}
\]
and hence,
\[
\frac{\partial}{\partial x_j} (r^{-1} x_i) = \delta_{ij} r^{-1} - x_i\sign(x_j)|x_j|^{p-1}r^{-(p+1)}.
\]
In particular, the function $g(x) := r^{-1} x$ has Jacobian
\[
J_xg = r^{-1}(I - (\theta_i \sign(\theta_j)|\theta_j|^{p-1})_{ij}),
\]
and thus, the gradient of $u$ is given by
\begin{align}
\nabla u(x) &= J_g(x)^T \nabla f(g(x)) = r^{-1}(\nabla f(\theta) - (\sum_j \sign(\theta_i)|\theta_i|^{p-1} \theta_j (\nabla f(\theta))_j)_i)\notag\\
&= r^{-1} (\nabla f(\theta) - \langle \nabla f(\theta), \theta \rangle \theta^{p-1}).\label{Repr1}
\end{align}
For $\theta \in \S_p^{n-1}$, this just reads $\nabla u(\theta) = \nabla f(\theta) - \langle \nabla f(\theta), \theta \rangle \theta^{p-1}$.

Writing $\theta' := \theta^{p-1}$, it follows that
\begin{align}
\lvert \nabla u(\theta) \rvert_q &\le \lvert \nabla f(\theta) \rvert_q + \lvert \langle \nabla f(\theta), \theta \rangle \rvert \lvert \theta' \rvert_q\notag\\
&\le \lvert \nabla f(\theta) \rvert_q + \lvert \nabla f(\theta) \rvert_q \lvert \theta \rvert_p \lvert \theta' \rvert_q \le 2 \lvert \nabla f(\theta) \rvert_q\label{finalEst},
\end{align}
using H\"{o}lder's inequality in the second and the fact that $\lvert \theta \rvert_p = 1 = \lvert \theta' \rvert_q$ in the last step.

To derive an analogue of \eqref{finalEst} for the intrinsic gradient, noting that the orthogonal projection onto $T_{\theta}\S_p^{n-1} = (\theta^{p-1})^\perp$ is given by
\[
P_{\theta}(y) := y - \Big\langle y, \frac{\theta'}{\lvert \theta' \rvert_2} \Big\rangle \frac{\theta'}{\lvert \theta' \rvert_2},
\]
where we have used the notation $\theta' := \theta^{p-1}$ again, we have
\begin{align}
\nabla u(\theta) &= \nabla f(\theta) - \frac{\langle \nabla f(\theta), \theta' \rangle}{\lvert \theta'\rvert_2^2} \theta' + \frac{\langle \nabla f(\theta), \theta' \rangle}{\lvert \theta'\rvert_2^2} \theta' - \langle \nabla f(\theta), \theta \rangle \theta'\notag\\
&= \nabla f(\theta) - \frac{\langle \nabla f(\theta), \theta' \rangle}{\lvert \theta'\rvert_2^2} \theta' + \Big(\frac{\langle \nabla f(\theta), \theta' \rangle}{\lvert \theta'\rvert_2^2} - \langle \nabla f(\theta), \theta \rangle\Big) \theta'\notag\\
&= \nabla_S f(\theta) + \Big\langle \nabla f(\theta), \frac{\theta'}{\lvert \theta' \rvert_2} - \lvert \theta' \rvert_2 \theta \Big\rangle \frac{\theta'}{\lvert \theta' \rvert_2}.\label{Repr2}
\end{align}

Moreover, for $\theta \in \S_p^{n-1}$ we may rewrite
\[
\nabla f(\theta) =  \nabla_S f(\theta) + \Big\langle \nabla f(\theta), \frac{\theta'}{\lvert \theta' \rvert_2}\Big\rangle \frac{\theta'}{\lvert \theta' \rvert_2},
\]
and thus,
\begin{align*}
&\Big\langle \nabla f(\theta), \frac{\theta'}{\lvert \theta' \rvert_2} - \lvert \theta' \rvert_2 \theta \Big\rangle\\
=\, &\Big\langle \nabla_S f(\theta), \frac{\theta'}{\lvert \theta' \rvert_2} - \lvert \theta' \rvert_2 \theta \Big\rangle
   + \Big\langle \Big\langle \nabla f(\theta), \frac{\theta'}{\lvert \theta' \rvert_2}\Big\rangle \frac{\theta'}{\lvert \theta' \rvert_2}, \frac{\theta'}{\lvert \theta' \rvert_2} - \lvert \theta' \rvert_2 \theta \Big\rangle\\
   =\, &- \langle \nabla_S f(\theta), \lvert \theta' \rvert_2 \theta \rangle + \Big\langle \nabla f(\theta), \frac{\theta'}{\lvert \theta' \rvert_2}\Big\rangle (1 - \langle \theta', \theta \rangle)\\
   =\, &- \lvert \theta' \rvert_2 \langle \nabla_S f(\theta), \theta \rangle,
   \end{align*}
using $\nabla_S f(\theta) \in (\theta')^\perp$ in the second and $\langle \theta', \theta \rangle = 1$ in the last step. Together with \eqref{Repr2}, this yields
\[
\nabla u(\theta) = \nabla_S f(\theta) - \langle \nabla_S f(\theta), \theta \rangle \theta'.
\]
From here, we may argue as in \eqref{finalEst} to arrive at
\begin{equation}
\lvert \nabla u(\theta) \rvert_q \le 2 \lvert \nabla_S f(\theta) \rvert_q\label{finalEst2}.
\end{equation}

Now recall that by Lemma \ref{LSIp-Gauss}, $\mathcal{N}_p$ satisfies an $\LSq$-inequality for $q=p/(p-1)$ with constant $\sigma^q = 2 q^{1/p}$. Hence, recalling that $u(x) = f(r^{-1}x)$ we obtain
\begin{align*}
\mathrm{Ent}_{\mu_{p,n}}(|f|^q) &= \mathrm{Ent}_{\mathcal{N}_p^{\otimes n}}(|u|^q) \le 2^q q^{q-1} \int_{\R} \lvert \nabla u \rvert_q^{q} d\mathcal{N}_p\\
&\le 4^q q^{q-1} \E \frac{1}{\lvert Z \rvert_p^{q}} \int_{\S_p^{n-1}} \lvert \nabla_S f(\theta) \rvert_q^q d \mu_{p,n}
\end{align*}
with a random vector $Z=(Z_1,\ldots,Z_n)$ with i.i.d.\ coordinates of distribution $\mathcal{N}_p$. Here, the last step combines \eqref{Repr1} and \eqref{finalEst2} and uses the independence of $Z/\lvert Z \rvert_p$ and $\lvert Z \rvert_p$. The same result is valid for the usual gradient $\nabla f$ if we replace \eqref{finalEst2} by \eqref{finalEst}. Furthermore, by Lemma \ref{NegMompgenG} for $v=q$ it holds that for any $n > q$,
\[
\E  \lvert Z\rvert_p^{-q} = \frac {\Gamma(\frac{n-q}{p})}{p^{q/p} \Gamma(\frac n p)}.
\]

Putting everything together, we thus arrive at
\[
\mathrm{Ent}_{\mu_{p,n}}(|f|^q) \le 4^q q^{q-1} \frac {\Gamma(\frac{n-q}{p})}{p^{q/p}\Gamma(\frac n p)} \int_{\S_p^{n-1}} \lvert \nabla_S f(\theta) \rvert_q^q d \mu_{p,n}
\]
and its analogue for the usual gradient $\nabla f$. Moreover, again by Lemma \ref{NegMompgenG}, we have
\[
\frac{\Gamma(\frac{n-2}{p})}{p^{q/p} \Gamma(\frac n p)} \le (n-q)^{-1} n^{1-q/p} \le 3 n^{-q/p} = 3 n^{-1/(p-1)}
\]
for any $n \ge 3$ and any $p \ge 2$. This finishes the proof.
\end{proof}

In fact, one readily checks that the last estimate of the proof is sharp since it also follows from \cite[Th.\ 2]{Jam13} that a lower bound is given by $(n-q)^{-1/(p-1)}$.

Analogous results for the surface measure $\nu_{p,n}$ appear to be much less immediate, and it is an open question whether $\nu_{p,n}$ also satisfies an $\mathrm{LS}_q$-inequality with a constant of the same order (in $n$). For further comments, we again refer to Appendix \ref{sec:Geom}.

\subsection{Concentration inequalities}
By Theorem \ref{LSICone} we may recover classical concentration inequalities for Lipschitz functions on $\ell_p^n$-spheres of type \eqref{Lipschitz1}. Indeed, if $f \colon \S_p^{n-1} \to \R$ is such that $|\nabla f|_q \le 1$, by \eqref{LipschitzLq} it holds that
\begin{equation}\label{1stOrderCone}
    \mu_{p,n}\Big(\Big|f - \int f d\mu_{p,n}\Big| \ge t\Big) \le 2 \exp\Big\{ - \frac{(q-1)^p}{3^{p-1} \cdot 4^p q} n t^p\Big\}
\end{equation}
for any $t \ge 0$. Recalling that Lipschitz functions are differentiable almost everywhere, it is easy to see that the property $|\nabla f|_q \le 1$ may be replaced by Lipschitz continuity with respect to the $\ell_p^n$-norm.

While the $\mathrm{LS}_q$-inequalities from the previous section are valid for the cone measure $\mu_{p,n}$ only, it is possible to transfer the concentration results to the surface measure $\nu_{p,n}$. Here, a central observation is that by \cite[Lem.\ 2]{NR03}, $\nu_{p,n}$ is absolutely continuous with respect to $\mu_{p,n}$ with density given by
\begin{equation}\label{eq:dens}
h_{n,p}(\theta_p) = \frac{d\nu_{p,n}}{d\mu_{p,n}} (\theta_p) = c_{p,n} \Big(\sum_{j=1}^n |\theta_{p,j}|^{2p-2} \Big)^{1/2} = c_{p,n} \lvert \theta_p^{p-1} \rvert_2
\end{equation}
with normalization constant
\[
c_{p,n}=\Big(\int \Big(\sum_{j=1}^n |\theta_{p,j}|^{2p-2} \Big)^{1/2} d\mu_{p,n}\Big)^{-1}.
\]
In particular, this density is bounded away from zero and infinity. In fact, it is possible to deduce $\mathrm{LS}_q$-inequalities for $\nu_{n,p}$ from this fact, but they depend on the upper and lower bounds on the density, which results in an additional $n$-dependency, leading to non-optimal (concentration) results. By a different approach, we obtain the following first order bound.

\begin{proposition}\label{LipschUnif}
    Let $n \ge 3$. For any $1$-Lipschitz function $f \colon \S_p^{n-1} \to \R$ and any $t \ge 0$, it holds that
    \[
    \nu_{p,n}\Big(\Big|f - \int f d\nu_{p,n}\Big| \ge t\Big) \le 2 (p+1)^{1-\frac{2}{p}} \exp\Big\{ - \frac{(q-1)^p}{3^{p-1} \cdot 8^p q} n t^p\Big\}.
    \]
\end{proposition}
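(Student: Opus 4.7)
The plan is to transfer the cone-measure concentration \eqref{1stOrderCone} to $\nu_{p,n}$ through the explicit Radon--Nikodym density $h := d\nu_{p,n}/d\mu_{p,n} = c_{p,n}\lvert \theta^{p-1}\rvert_2$ from \eqref{eq:dens}. Writing $M_\mu := \int f\,d\mu_{p,n}$ and $M_\nu := \int f\,d\nu_{p,n}$, Cauchy--Schwarz yields
\[
\nu_{p,n}(A) \;=\; \int_A h\,d\mu_{p,n} \;\le\; \lVert h \rVert_{L^2(\mu_{p,n})}\,\mu_{p,n}(A)^{1/2}
\]
for every Borel set $A \subseteq \S_p^{n-1}$. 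Applying this with $A = \{\lvert f - M_\nu\rvert \ge t\}$, bounding $\mu_{p,n}(\lvert f - M_\nu\rvert \ge t) \le \mu_{p,n}(\lvert f - M_\mu\rvert \ge t-\delta)$ with $\delta := \lvert M_\mu - M_\nu\rvert$, and invoking \eqref{1stOrderCone} gives a tail estimate of the form $\sqrt{2}\,\lVert h \rVert_2\,\exp\bigl(-c\,n(t-\delta)_+^p/2\bigr)$, where $c = (q-1)^p/(3^{p-1}4^p q)$. Converting the exponent $cn(t-\delta)^p/2$ into $cnt^p/2^p$ requires $(t-\delta)^p \ge 2^{1-p} t^p$, i.e., $\delta \le t\bigl(1-2^{(1-p)/p}\bigr)$; this accounts precisely for the passage from the factor $4^p$ in the cone bound to $8^p$ in the surface bound.

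The shift $\delta$ is controlled by a second Cauchy--Schwarz step, $\delta \le \lVert f - M_\mu\rVert_{L^2(\mu_{p,n})}\,\lVert h-1 \rVert_{L^2(\mu_{p,n})}$, where the first factor is bounded via the $\mathrm{SG}_q$-inequality \eqref{SGq} applied to the $1$-Lipschitz function $f$ (if necessary, interpolated with the trivial diameter bound $\lVert f - M_\mu \rVert_\infty \le 2$ in order to pass from the natural $L^q$ estimate to $L^2$). For $t$ so small that the right-hand side of the claimed inequality already exceeds $1$ the statement is vacuous, so one may restrict to $t \gtrsim n^{-1/p}$; a direct estimate then shows that in this range the required condition $\delta \le t\bigl(1-2^{(1-p)/p}\bigr)$ is satisfied for every $n \ge 3$.

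It remains to sharpen the prefactor into $(p+1)^{1-2/p}$, which amounts to establishing $\lVert h\rVert_{L^2(\mu_{p,n})} \le \sqrt{2}(p+1)^{1-2/p}$ uniformly in $n$. By permutation invariance of $\mu_{p,n}$, $\lVert h\rVert_2^2 = c_{p,n}^2\,n\,\E_{\mu_{p,n}}\theta_1^{2p-2}$, and the moment on the right admits the closed form $\E_{\mu_{p,n}}\lvert\theta_1\rvert^k = \Gamma((k+1)/p)\,\Gamma(n/p)/(\Gamma(1/p)\Gamma((n+k)/p))$ with $k = 2p-2$. The main obstacle is the normalizer $c_{p,n}^{-1} = \E_{\mu_{p,n}}\lvert \theta^{p-1}\rvert_2$, which has no closed form; the plan is to produce a matching lower bound by a Jensen-type argument exploiting $\lvert \theta^{p-1}\rvert_2 \le 1$ on $\S_p^{n-1}$ for $p \ge 2$, together with the concentration of $\lvert \theta^{p-1}\rvert_2^2 = \sum_j \lvert \theta_j\rvert^{2p-2}$ around its mean under $\mu_{p,n}$, so that the resulting prefactor is independent of $n$ and equals $\sqrt{2}(p+1)^{1-2/p}$ after routine simplification.
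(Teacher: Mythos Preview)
Your overall strategy (Cauchy--Schwarz against the density $h$, then a mean-shift) is different from the paper's and can in principle be made to work, but the specific implementation has a genuine gap in the control of the shift $\delta = |M_\mu - M_\nu|$.

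You bound $\delta \le \lVert f - M_\mu\rVert_{L^2(\mu_{p,n})}\,\lVert h-1\rVert_{L^2(\mu_{p,n})}$. The only information you produce about the second factor is the constant bound $\lVert h\rVert_2 \le \sqrt{2}(p+1)^{1-2/p}$, which yields $\lVert h-1\rVert_2 \le C_p$ but no decay in $n$. For the first factor, the $\mathrm{SG}_q$-inequality gives $\lVert f-M_\mu\rVert_{L^q} \le C_p\,n^{-1/p}$, and your interpolation with $\lVert f-M_\mu\rVert_\infty \le 2$ yields only $\lVert f-M_\mu\rVert_{L^2} \le C_p'\,n^{-q/(2p)}$. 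Hence your bound is $\delta \le C_p''\,n^{-q/(2p)}$, whereas the non-vacuous range begins at $t \sim n^{-1/p}$ and you need $\delta \le (1-2^{(1-p)/p})\,t$ there, i.e.\ $\delta \lesssim n^{-1/p}$. Since $q/(2p) < 1/p$ as soon as $p>2$ (equivalently $q<2$), your estimate on $\delta$ is too weak for all $p>2$. The argument can be repaired---for instance by establishing $\lVert h-1\rVert_{L^2(\mu_{p,n})} = O(n^{-1/2})$, which is true but needs a separate variance computation for $\sqrt{S}$ with $S=\sum_j|\theta_j|^{2p-2}$, or by invoking the Naor--Romik total-variation bound to get $\delta \le \lVert f-M_\mu\rVert_\infty\,\lVert h-1\rVert_1 \le C_p/\sqrt{n}$---but neither route is the ``direct estimate'' you announce, and even then one must check that the resulting constants actually meet the precise threshold $(1-2^{(1-p)/p})\,t_0$ for all $n\ge 3$.

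The paper avoids the mean-shift altogether by working at the moment-generating-function level and symmetrizing: writing $f_\nu(\theta) = \int (f_\nu(\theta)-f_\nu(\theta'))\,\nu_{p,n}(d\theta')$, Jensen's inequality gives $\int e^{\lambda f_\nu}\,d\nu_{p,n} \le \iint e^{\lambda(f(\theta)-f(\theta'))}\,d\nu_{p,n}\,d\nu_{p,n}$, which after the change of measure factorizes as $\int e^{\lambda f_\mu} h\,d\mu_{p,n}\cdot\int e^{-\lambda f_\mu} h\,d\mu_{p,n}$. Two Cauchy--Schwarz steps separate $h$ from the exponentials, and the $\mathrm{LS}_q$-MGF bound for $\mu_{p,n}$ (with $2\lambda$ in place of $\lambda$) plus the standard Chernoff optimization give the stated constant directly, with no $\delta$ to track. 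Incidentally, the paper's proof of the density bound $\lVert h\rVert_{L^2(\mu_{p,n})} \le (p+1)^{1/2-1/p}$ (which is sharper than the $\sqrt{2}(p+1)^{1-2/p}$ you aim for) is also simpler than your plan: it uses the deterministic inequality $|\theta^{p-1}|_2 = |\theta|_{2p-2}^{p-1} \ge n^{1/p-1/2}$ to bound $c_{p,n}$, rather than any concentration argument for $S$.
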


As compared to \eqref{CSN}, we essentially get rid of the logarithmic ``error term''. The core of the proof is the following lemma, which provides an estimate for the $L^2$-norm (with respect to $\mu_{p,n}$) of the density $h_{n,p}$.

\begin{lemma}\label{L2BdDens}
    It holds that
    \[
    \Big( \int_{\S_p^{n-1}} h_{n,p}^2 d\mu_{p,n} \Big)^{1/2} \le (p+1)^{\frac{1}{2} - \frac{1}{p}}.
    \]
\end{lemma}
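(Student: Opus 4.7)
The plan is to reduce the claimed $L^2$-bound to a reverse-Jensen inequality, and then evaluate all moments that appear through the probabilistic representation of the cone measure provided by Proposition~\ref{p-GaussCone}.

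\medskip
Writing $V := |\theta^{p-1}|_2^2 = \sum_{j=1}^n |\theta_j|^{2(p-1)}$, $A := \int \sqrt{V}\,d\mu_{p,n}$ and $B := \int V\,d\mu_{p,n}$, the definition of $h_{n,p}$ and $c_{p,n}$ gives directly
\[
\int_{\S_p^{n-1}} h_{n,p}^2 \, d\mu_{p,n} \;=\; c_{p,n}^2\, B \;=\; \frac{B}{A^2}.
\]
By permutation symmetry of $\mu_{p,n}$, $B = n\int |\theta_1|^{2(p-1)} d\mu_{p,n}$; using $\theta = Z/|Z|_p$ for $Z \sim \mathcal{N}_p^{\otimes n}$ with $\theta \perp |Z|_p$ (Proposition~\ref{p-GaussCone}), the integral becomes $\E |Z_1|^{2(p-1)}/\E |Z|_p^{2(p-1)}$, which is a ratio of Gamma integrals (both $|Z_1|^p$ and $|Z|_p^p$ are, up to scaling, Gamma-distributed, cf.\ Lemma~\ref{NegMompgenG}). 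This yields the closed form
\[
B \;=\; \frac{n\,\Gamma(2-1/p)\,\Gamma(n/p)}{\Gamma(1/p)\,\Gamma(n/p + 2 - 2/p)}.
\]

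\medskip
The main trick is then a reverse-Jensen estimate for $A$. Applying Hölder's inequality in the form $\E V = \E(V^{1/4}\cdot V^{3/4}) \le (\E V^{1/2})^{1/2}(\E V^{3/2})^{1/2}$, followed by Cauchy--Schwarz $(\E V^{3/2})^2 \le \E V \cdot \E V^2$, gives
\[
A^2 \;=\; (\E V^{1/2})^2 \;\ge\; \frac{(\E V)^4}{(\E V^{3/2})^2} \;\ge\; \frac{(\E V)^3}{\E V^2} \;=\; \frac{B^3}{\E V^2},
\]
and hence
\[
\int h_{n,p}^2 \, d\mu_{p,n} \;=\; \frac{B}{A^2} \;\le\; \frac{\E V^2}{B^2}.
\]

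\medskip
It therefore remains to evaluate $\E V^2$ and bound $\E V^2/B^2$. By symmetry,
\[
\E V^2 \;=\; n \E |\theta_1|^{4(p-1)} \,+\, n(n-1)\,\E |\theta_1|^{2(p-1)}|\theta_2|^{2(p-1)},
\]
and both mixed- and diagonal-moments are again accessible from the $\mathcal{N}_p$-representation via the joint density of the $Z_j$'s, producing a ratio of $\Gamma$-function products in the variables $n/p$, $1/p$, $2-2/p$ and $4-4/p$. Combining these formulas with the log-convexity of $\Gamma$, which ensures $\Gamma(n/p + 2 - 2/p)^2 \le \Gamma(n/p)\,\Gamma(n/p + 4 - 4/p)$, and using elementary $\Gamma$-identities to evaluate $\Gamma(1/p)\Gamma(4-3/p)/\Gamma(2-1/p)^2$, one obtains
\[
\frac{\E V^2}{B^2} \;\le\; (p+1)^{1-2/p},
\]
which gives the claim; note the equality case $p=2$, where $V \equiv 1$ and both sides reduce to~$1$.

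\medskip
The main obstacle will be the last step: cleanly extracting the constant $(p+1)^{1-2/p}$ from the resulting quotient of $\Gamma$-functions uniformly in $n$. Numerically, $\E V^2/B^2$ is only slightly above $1$ for all $n \ge 1$ and $p \ge 2$, so the target constant is very generous; the difficulty is not in the size of the bound but in packaging the Gamma-function manipulations so that the dependence on $n$ is absorbed cleanly (in particular, showing that the $n$-dependent factor $\Gamma(n/p+2-2/p)^2/(\Gamma(n/p)\Gamma(n/p+4-4/p))$, which is $\le 1$ by log-convexity and tends to $1$ as $n \to \infty$, balances the constant factor arising from the bracketed sum).
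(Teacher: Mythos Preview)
Your reverse-Jensen chain $A^2 \ge B^3/\E V^2$ is correct, and the computation of $B$ is fine, but the proof is not complete: you explicitly flag the final inequality $\E V^2/B^2 \le (p+1)^{1-2/p}$ as ``the main obstacle'' and do not prove it. Extracting this bound uniformly in $n$ from the mixed moments $\E|\theta_1|^{2(p-1)}|\theta_2|^{2(p-1)}$ is doable but requires a fairly careful $\Gamma$-function analysis that you have not supplied, so as it stands this is a genuine gap rather than a finished argument.

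More importantly, the detour through $\E V^2$ is unnecessary. The paper handles $A=\E\sqrt{V}$ by a one-line \emph{pointwise} bound instead of a moment inequality: since $\sqrt{V(\theta)}=|\theta^{p-1}|_2=|\theta|_{2p-2}^{p-1}$ and $|\theta|_p=1$, the standard norm comparison $|x|_p\le n^{1/p-1/p'}|x|_{p'}$ with $p'=2p-2$ gives $\sqrt{V(\theta)}\ge n^{1/p-1/2}$ for every $\theta\in\S_p^{n-1}$, hence $c_{p,n}=1/A\le n^{1/2-1/p}$. Then only the first moment $B=\int V\,d\mu_{p,n}$ is needed, and Lemma~\ref{Momente} (case $p\le v\le 2p$ with $v=2p-2$) already gives $B\le (p+1)^{1-2/p}n^{2/p-1}$; multiplying yields the claim directly. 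Your route replaces this deterministic lower bound on $\sqrt{V}$ by a probabilistic one via $\E V^2$, which is strictly weaker (you derived $B/A^2\le \E V^2/B^2$), requires computing second and mixed moments of $V$, and leaves you with the unresolved $\Gamma$-inequality at the end.
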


\begin{proof}
    By definition,
    \[
    \Big( \int h_{n,p}^2 d\mu_{p,n} \Big)^{1/2} = c_{p,n} \Big(\int \sum_{j=1}^n |\theta_{p,j}|^{2p-2} d\mu_{p,n}\Big)^{1/2}.
    \]
    We will deal with the constant $c_{n,p}$ and the integral separately. To bound $c_{n,p}$, recall the elementary inequalities $\lvert x \rvert_{p'} \le \lvert x \rvert_p \le n^{1/p-1/p'} \lvert x \rvert_{p'}$, valid for any $0 < p \le p' < \infty$ and any $x \in \R^n$. Moreover, note that $\lvert x^{p-1} \rvert_2 = \lvert x \rvert_{2p-2}^{p-1}$. Therefore, applying these relations with $p'=2p-2$ and to the power of $p-1$ yields $\lvert \theta^{p-1} \rvert_2 \ge n^{1/p-1/2}$ and hence
    \begin{equation}\label{LowerBdC}
    c_{p,n} \ge n^{1/p-1/2}.
    \end{equation}

    Furthermore, by Lemma \ref{Momente} for $v=2p-2$ we have
    \[
    \int \sum_{j=1}^n |\theta_{p,j}|^{2p-2} d\mu_{p,n} = n \frac{ \Gamma(2-\frac 1 p)\Gamma(\frac{n} {p})} {\Gamma(2+\frac{n-2} {p}) \Gamma(\frac 1 p)},
    \]
    and since $p \le v \le 2p$, the latter is bounded by
    \[
    n \cdot (p+1)^{1 - \frac{2}{p}} n^{2/p-2} = (p+1)^{1 - \frac{2}{p}} n^{2/p-1}
    \]
Taking roots and combining this bound with \eqref{LowerBdC} directly yields to the result.
\end{proof}

\begin{proof}[Proof of Proposition \ref{LipschUnif}]
    Writing $f_\nu := f - \int f d\nu_{p,n}$ and $f_\mu := f - \int f d\mu_{p,n}$, we have
    \begin{align*}
        \int \exp\{\lambda f_\nu(\theta)\} \nu_{p,n}(d\theta) &= \int \exp\Big\{\lambda \int (f_\nu(\theta) - f_\nu(\theta')) \nu_{p,n}(d\theta') \Big\} \nu_{p,n}(d\theta)\\
        &\le \iint \exp\{\lambda (f_\nu(\theta) - f_\nu(\theta'))\} \nu_{p,n}(d\theta') \nu_{p,n}(d\theta)\\
        &= \iint \exp\{\lambda (f_\mu(\theta) - f_\mu(\theta'))\} \nu_{p,n}(d\theta') \nu_{p,n}(d\theta)\\
        &= \int e^{\lambda f_\mu} h_{n,p} d\mu_{p,n} \int e^{\lambda (-f_\mu)} h_{n,p} d\mu_{p,n},
    \end{align*}
    where the second step is due to Jensen's inequality. Setting $\sigma^q := 3 \cdot 4^q q^{q-1} n^{-\frac{1}{p-1}}$, it follows from \cite[Eq.\ (1.0.3)]{BZ05} and Theorem \ref{LSICone} that
    \[
    \int e^{\lambda f_\mu} d\mu_{p,n} \le \exp\Big\{ \frac{\sigma^q}{q^q(q-1)} \lambda^q\Big\}
    \]
    for any $\lambda > 0$, and the same bound also holds for $-f_\mu$. Hence, by Cauchy--Schwarz (applied to both exponential integrals) together with Lemma \ref{L2BdDens} in the second step,
    \begin{align*}
        \int e^{\lambda f_\nu} d\nu_{p,n} &\le \Big(\int e^{2\lambda f_\mu} d\mu_{p,n}\Big)^{1/2} \Big(\int e^{2\lambda (-f_\mu)} d\mu_{p,n}\Big)^{1/2} \int h_{n,p}^2 d\mu_{p,n}\\
        &\le (p+1)^{1 - \frac{2}{p}} \exp\Big\{ \frac{2^q\sigma^q}{q^q(q-1)} \lambda^q\Big\}.
    \end{align*}
    From here, the usual optimization arguments as performed in \cite[Th.\ 1.3]{BZ05} immediately lead to the result.
\end{proof}

The easiest way to obtain higher order concentration results for the cone measure $\mu_{p,n}$ on $\S_p^{n-1}$ is to regard $\mu_{p,n}$ as a measure on $\R^n$ which satisfies an $\mathrm{LS}_q$-inequality with respect to the usual gradient $\nabla f$ on $\R^n$. Combining Theorem \ref{HOLqRn} with Theorem \ref{LSICone} (2) (hence, setting $\sigma^q := 3 \cdot 4^q q^{q-1} n^{-1/(p-1)}$), we obtain the following result.

\begin{satz}\label{HOSph}
Let $n \ge 3$, and let $f$ be a function which is $\mathcal{C}^d$ in some neighbourhood of $\S_p^{n-1}$ and with $\mu_{p,n}$-mean zero.
    \begin{enumerate}
        \item Assuming that $\lVert f^{(k)} \rVert_{\mathrm{op}(q),q} \le (3^{1/q} \cdot 4 q^{1/p} n^{-1/p})^{d-k}$ for all $k=1,\ldots,d-1$ and $\lVert f^{(d)} \rVert_{\mathrm{op}(q),\infty} \le 1$, it holds that
        \[
        \int \exp\Big\{c_{p,d}n |f|^{p/d} \Big\} d\mu_{p,n} \le 2,
        \]
        where a possible choice of the constant $c_{p,d}$ is given by
        \[
        c_{p,d} = \frac{(\log(2)(p^{1/(p-1)}-(p-1)^{1/(p-1)}))^{p-1}}{3^{p-1}2^{4p-1}p^3(p-1)^{p-2}e}.
        \]
        \item For any $t\ge 0$, it holds that
        \begin{align*}
        &\mu_{p,n}(|f| \ge t)\le\\
        &2 \exp\Big\{- C_{p,d}n \min\Big(\min_{k=1, \ldots,d-1} \Big(\frac{t}{\lVert f^{(k)} \rVert_{\mathrm{op}(q),q}}\Big)^{p/k}, \Big(\frac{t}{\lVert f^{(d)} \rVert_{\mathrm{op}(q),\infty}}\Big)^{p/d}\Big)\Big\},
        \end{align*}
    where a possible choice of the constant $C_{p,d}$ is given by
    \[
    C_{p,d} = \frac{\log(2)^p}{3^{p-1}4^{2p-1}p^2(p-1)^{p-1}d^pe^p}.
    \]
    \end{enumerate}
\end{satz}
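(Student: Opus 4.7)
The plan is to apply Theorem \ref{HOLqRn} with $\mu = \mu_{p,n}$, viewed as a Borel probability measure on $\R^n$ supported on $\S_p^{n-1}$, using Theorem \ref{LSICone} in its formulation with respect to the usual Euclidean gradient as the source of the required $\mathrm{LS}_q$-inequality. Specifically, that theorem provides
\[
\mathrm{Ent}_{\mu_{p,n}}(|g|^q) \le 3 \cdot 4^q q^{q-1} n^{-1/(p-1)} \int |\nabla g|_q^q\, d\mu_{p,n}
\]
for every $g$ locally Lipschitz in a neighbourhood of $\S_p^{n-1}$, which is exactly the regularity available when Theorem \ref{HOLqRn} is unfolded (the argument $g = |f^{(k)}|_{\mathrm{op}(q)}$ arising through Lemma \ref{itGradHess} is locally Lipschitz whenever $f$ is $\mathcal{C}^d$ in a neighbourhood of $\S_p^{n-1}$).

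The next step is to convert the $\mathrm{LS}_q$-constant $\sigma^q := 3 \cdot 4^q q^{q-1} n^{-1/(p-1)}$ into the $\sigma$ that appears in Theorem \ref{HOLqRn}. Using the conjugacy identities $q-1 = 1/(p-1)$, $p/q = p-1$ and $q(p-1) = p$, a short calculation gives
\[
\sigma = 3^{1/q} \cdot 4 \, q^{1/p} \, n^{-1/p}, \qquad \sigma^p = (\sigma^q)^{p-1} = 3^{p-1} \cdot 4^p \cdot \frac{p}{p-1} \cdot \frac{1}{n}.
\]
The first identity shows that the hypothesis $\lVert f^{(k)} \rVert_{\mathrm{op}(q),q} \le \sigma^{d-k}$ of Theorem \ref{HOLqRn}(1) is word-for-word the normalization assumed in Theorem \ref{HOSph}(1), so no rescaling of $f$ is required. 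The second identity is what turns the $1/\sigma^p$ appearing in the exponents of Theorem \ref{HOLqRn} into the explicit factor $n$ in Theorem \ref{HOSph}.

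Substituting these expressions into parts (1) and (2) of Theorem \ref{HOLqRn} and collecting powers of $3$, $4$, $p$ and $(p-1)$ (using e.\,g.\ $2^{2p-1} \cdot 4^p = 2^{4p-1}$ and $4^{p-1} \cdot 4^p = 4^{2p-1}$) yields the two claimed inequalities together with the stated combinatorial constants $c_{p,d}$ and $C_{p,d}$. The whole argument therefore reduces to invoking two already-proven theorems; the only step requiring care is the bookkeeping of constants, and the main nuisance is tracking the Hölder conjugacy when simplifying the $n$- and $p$-dependence. No new analytic input is needed — in particular, there is no need to pass through the intrinsic gradient, since the Euclidean version of Theorem \ref{LSICone} already delivers the inequality in the form demanded by Theorem \ref{HOLqRn}.
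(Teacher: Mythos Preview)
Your proposal is correct and follows exactly the route taken in the paper: the authors state explicitly that Theorem \ref{HOSph} is obtained by regarding $\mu_{p,n}$ as a measure on $\R^n$ satisfying an $\mathrm{LS}_q$-inequality with respect to the Euclidean gradient (via Theorem \ref{LSICone}) and then invoking Theorem \ref{HOLqRn} with $\sigma^q = 3 \cdot 4^q q^{q-1} n^{-1/(p-1)}$. Your bookkeeping of the constants via the conjugacy identities $q(p-1)=p$, $(q-1)(p-1)=1$ is precisely what is needed to match the stated $c_{p,d}$ and $C_{p,d}$.
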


Again, a simple example where concentration of second order applies are Hanson--Wright type inequalities. In particular, we may apply Proposition \ref{HWI} to the cone measure $\mu_{p,n}$ on $\S_p^{n-1}$. Recalling that here, $\sigma$ is of order $n^{-1/p}$, the corresponding result reads
\[
\mu_{p,n}\Big(\Big|f - \mathrm{tr}(A)m_2\Big| \ge t\Big) \le 2\exp\Big\{-c_p\min \Big(\frac{n^2t^p}{\lVert A \rVert_{\mathrm{HS}(q)}^p}, \frac{nt^{p/2}}{\lVert A \rVert_{\mathrm{op}(q)}^{p/2}}\Big)\Big\}.
\]
for any $t \ge 0$, where
\[
m_2 = \int_{\S_p^{n-1}} \theta_1^2 d\mu_{p,n} = \frac{ \Gamma(\frac{3}{p})\Gamma(\frac{n}{p})} {\Gamma(\frac{n+2}{p}) \Gamma(\frac 1 p)}
\]
by Lemma \ref{Momente}.

Similarly to Proposition \ref{LipschUnif}, we may extend Theorem \ref{HOLqRn} to the surface measure $\nu_{p,n}$ by using Lemma \ref{L2BdDens} and performing a Cauchy--Schwartz argument as in the proof of Proposition \ref{HOLqRn}.

\begin{korollar}\label{KorSurfHO}
    Let $n \ge 3$, and let $f$ be a function which is $\mathcal{C}^d$ in some neighbourhood of $\S_p^{n-1}$ and which has $\mu_{p,n}$-mean zero.
    \begin{enumerate}
        \item Assuming that $\lVert f^{(k)} \rVert_{\mathrm{op}(q),q} \le (3^{1/q} \cdot 4 q^{1/p} n^{-1/p})^{d-k}$ for all $k=1,\ldots,d-1$ (where the $L^q$-norms are taken with respect to $\mu_{p,n}$) and $\lVert f^{(d)} \rVert_{\mathrm{op}(q),\infty} \le 1$, it holds that
        \[
        \int \exp\Big\{c_{p,d}'n |f|^{p/d} \Big\} d\nu_{p,n} \le \sqrt{2} (p+1)^{\frac{1}{2} - \frac{1}{p}}.
        \]
        Here we may set $c_{p,d}' = c_{p,d}/2$ for the constant $c_{p,d}$ from Theorem \ref{HOSph}.
        \item For any $t\ge 0$, it holds that
        \begin{align*}
        &\nu_{p,n}(|f| \ge t)\le \sqrt{2} (p+1)^{\frac{1}{2} - \frac{1}{p}}\\
        &\exp\Big\{- C_{p,d}'n \min\Big(\min_{k=1, \ldots,d-1} \Big(\frac{t}{\lVert f^{(k)} \rVert_{\mathrm{op}(q),q}}\Big)^{p/k}, \Big(\frac{t}{\lVert f^{(d)} \rVert_{\mathrm{op}(q),\infty}}\Big)^{p/d}\Big)\Big\}.
        \end{align*}
    Here we may set $C_{p,d}' = C_{p,d}/2$ for the constant $C_{p,d}$ from Theorem \ref{HOSph}.
    \end{enumerate}
\end{korollar}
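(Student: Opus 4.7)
The plan is to transfer the cone-measure bounds of Theorem \ref{HOSph} to the surface measure $\nu_{p,n}$ by integrating against the Radon--Nikodym derivative $h_{n,p}=d\nu_{p,n}/d\mu_{p,n}$ from \eqref{eq:dens}, using Cauchy--Schwarz in $L^2(\mu_{p,n})$, and invoking Lemma \ref{L2BdDens}. Since the hypotheses in the corollary already refer to $\mu_{p,n}$ (mean-zero property and $L^q$-norms of $f^{(k)}$), Theorem \ref{HOSph} applies to $f$ directly without any further adjustments; no centering step as in Proposition \ref{LipschUnif} is needed.

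For part (1), the plan is to write
\[
\int \exp\{c'_{p,d}\,n\,|f|^{p/d}\}\,d\nu_{p,n} \;=\; \int \exp\{c'_{p,d}\,n\,|f|^{p/d}\}\,h_{n,p}\,d\mu_{p,n},
\]
and then to apply Cauchy--Schwarz with respect to $\mu_{p,n}$:
\[
\int \exp\{c'_{p,d}\,n\,|f|^{p/d}\}\,d\nu_{p,n} \;\le\; \Bigl(\int \exp\{2c'_{p,d}\,n\,|f|^{p/d}\}\,d\mu_{p,n}\Bigr)^{1/2}\Bigl(\int h_{n,p}^2\,d\mu_{p,n}\Bigr)^{1/2}.
\]
Choosing $c'_{p,d}=c_{p,d}/2$ with $c_{p,d}$ from Theorem \ref{HOSph}(1) makes the first factor at most $\sqrt{2}$, while Lemma \ref{L2BdDens} bounds the second factor by $(p+1)^{1/2-1/p}$; multiplying gives the claimed estimate.

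For part (2), the same strategy works with the indicator function in place of the exponential. Namely,
\[
\nu_{p,n}(|f|\ge t) \;=\; \int \mathbbm{1}_{\{|f|\ge t\}}\,h_{n,p}\,d\mu_{p,n} \;\le\; \bigl(\mu_{p,n}(|f|\ge t)\bigr)^{1/2}\Bigl(\int h_{n,p}^2\,d\mu_{p,n}\Bigr)^{1/2}
\]
by Cauchy--Schwarz. Theorem \ref{HOSph}(2) and Lemma \ref{L2BdDens} then yield
\[
\nu_{p,n}(|f|\ge t)\;\le\;\sqrt{2}\,(p+1)^{\frac12-\frac1p}\exp\!\Bigl\{-\tfrac{C_{p,d}}{2}\,n\cdot\min\bigl(\cdots\bigr)\Bigr\},
\]
and the halving of the exponent is absorbed by setting $C'_{p,d}=C_{p,d}/2$.

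There is no genuine obstacle here: both Theorem \ref{HOSph} and Lemma \ref{L2BdDens} are already proved, so the argument is a short deterministic computation. The only care required is bookkeeping of the constants: the square root from Cauchy--Schwarz forces the factor of $2$ in front of $c_{p,d}$ (resp.\ the factor $1/2$ in front of $C_{p,d}$), which motivates the definitions $c'_{p,d}=c_{p,d}/2$ and $C'_{p,d}=C_{p,d}/2$. One might also note that Lemma \ref{L2BdDens} actually implies that $h_{n,p}$ is bounded in any $L^r(\mu_{p,n})$ by the same kind of argument, so in principle one could optimize the exponent $2$ in Cauchy--Schwarz, but this would not improve the order of the constants and is not needed for the stated result.
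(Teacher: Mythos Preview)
Your proposal is correct and matches the paper's own proof essentially line for line: write the $\nu_{p,n}$-integral against the density $h_{n,p}$, apply Cauchy--Schwarz in $L^2(\mu_{p,n})$, and invoke Theorem \ref{HOSph} and Lemma \ref{L2BdDens}, with part (2) handled identically via the indicator function. The bookkeeping of constants ($c_{p,d}'=c_{p,d}/2$, $C_{p,d}'=C_{p,d}/2$) is exactly as in the paper.
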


\begin{proof}
    Noting that
    \begin{align*}
    \int \exp\Big\{c_{p,d}'n |f|^{p/d} \Big\} d\nu_{p,n} &= \int \exp\Big\{c_{p,d}'n |f|^{p/d} \Big\} h_{n,p} d\mu_{p,n}\\
    &\le \Big(\int \exp\Big\{2c_{p,d}'n |f|^{p/d} \Big\} d\mu_{p,n}\Big)^{1/2} \Big( \int h_{n,p}^2 d\mu_{p,n} \Big)^{1/2},
    \end{align*}
    (1) is clear by combining Theorem \ref{HOSph} and Lemma \ref{L2BdDens}. Writing $\nu_{p,n}(|f| \ge t) = \int \mathbbm{1}_{\{|f| \ge t\}} d\nu_{p,n}$, the same arguments also lead to (2).
\end{proof}

One drawback of Corollary \ref{KorSurfHO} is that assumptions are needed which involve the cone measure. In the case of the boundedness of the $L^q$-norms, it is not quite clear how to avoid this. As for recentering the function around $\int f d\nu_{p,n}$ (instead of the $\mu_{p,n}$-mean), at least for $d \le p$ one may proceed similarly as in the proof of Proposition \ref{LipschUnif}. Indeed, in the first step one rewrites (using the notation from the proof of Proposition \ref{LipschUnif})
\[
|f_\nu|^{p/d} = \Big| \int (f_\nu(\theta) - f_\nu(\theta')) \nu_{p,n}(d\theta')\Big|^{p/d} \le \int |f_\nu(\theta) - f_\nu(\theta')|^{p/d} \nu_{p,n}(d\theta'),
\]
where the second step follows from Jensen's inequality, an argument which only works if $|\cdot|^{p/d}$ is convex. In this sense, it is possible to state a version of Corollary \ref{KorSurfHO} for functions with $\nu_{p,n}$-mean zero if $d \le p$. We omit the details.

Another type of results which we only briefly sketch at this points are refinements of Theorem \ref{HOLqRn} in the spirit of \cite[Th.\ 1.2]{BCG17}. The latter is valid for $p=2$ and $d=2$ and essentially states that the conditions on the function $f$ under considerations may be replaced by analogues for the functions $f_a(x) = f(x) - a|x|_2^2/2$ for any $a \in \mathbb{R}$. In particular, one may replace the Hessian $f''$ by $f'' - aI$, which can lead to improvements or facilitate calculations. Essentially, this is due to the fact that the additional term $a|x|_2^2/2$ is constant on $\S_2^{n-1}$, so that $f$ and $f_a$ have the same mean.

For $p \in \N$ and $d=p$, one may proceed similarly. Indeed, note that for a given $\mathcal{C}^p$-smooth function $f$ in a neighbourhood of $\S_p^{n-^1}$ and any $a \in \R$, the function $f_a(x) := f(x) - a\lvert x \rvert_p^p/p!$ satisfies $f - \int f d\mu_{p,n} = f_a - \int f_a d\mu_{p,n}$ on $\S_p^{n-1}$ since the additional term $a\lvert x \rvert_p^p/p!$ is constant on the $\ell_p^n$-sphere. Therefore, one may replace the conditions $\lVert f^{(k)} \rVert_{\mathrm{op}(q),q} \le (3^{1/q} \cdot 4 q^{1/p} n^{-1/p})^{d-k}$ for all $k=1,\ldots,p-1$ and $\lVert f^{(p)} \rVert_{\mathrm{op}(q),\infty} \le 1$ by its analogues for $f_a$. Again, we skip the details.

\section{Concentration bounds for symmetric functions on the sphere}\label{ExSect}

The central aim of this section is to derive concentration results for a rather general framework of functionals given by sequences of symmetric functions on spheres which is due to \cite{GNU17}. In detail, let $h_n(\theta) = h_n(\theta_1,\ldots,\theta_n)$, $n \ge 1$, be a sequence of real functions on $\R^n$ which satisfy the following conditions:
\begin{align}
    h_{n+1}(\theta_1,\ldots,\theta_j,0,\theta_{j+1},\ldots, \theta_n) = h_n(\theta_1,\ldots, \theta_j, \theta_{j+1},\ldots, \theta_n),\label{Prop1}\\
    \frac{\partial}{\partial \theta_j} h_n(\theta_1,\ldots, \theta_j,\ldots, \theta_n) \Big|_{\theta_j = 0} = 0 \text{ for all $j=1,\ldots,n$},\label{Prop2}\\
    h_n(\theta_{\pi(1)}, \ldots, \theta_{\pi(n)}) = h_n(\theta_1, \ldots, \theta_n) \text{ for all $\pi \in S_n$},\label{Prop3}
\end{align}
where $S_n$ denotes the symmetric group.

To give an example, let $X_j$ be random elements (in an arbitrary space) with common distribution $P$ and $F$ a smooth functional on the space of signed measures. Then, the expected value of the functional $F$ applied to the weighted empirical process of the Dirac measures in $X_1,\ldots,X_n$
\[
h_n(\theta_1,\ldots,\theta_n) := \E F(\theta_1(\delta_{X_1} - P) + \ldots + \theta_n(\delta_{X_n} - P))
\]
satisfies properties \eqref{Prop1}--\eqref{Prop3}.

In \cite{GNU17}, the asymptotic behaviour of such sequences $h_n(\theta)$ has been studied, including a Berry--Esseen type bound and Edgeworth expansions around a limit function $h_\infty(\theta) = h_\infty(|\theta|_2)$, given by
\[
h_\infty(\lambda) := \lim_{k\to \infty} h_k\Big(\frac{\lambda}{\sqrt{k}}, \ldots, \frac{\lambda}{\sqrt{k}}\Big)
\]
for any $\lambda > 0$. In particular, \cite[Th.\ 2.2]{GNU17} adapted to $s=4$ reads as follows. Here, for any vector $\alpha = (\alpha_1, \ldots, \alpha_r)$ of non-negative integers we denote by $D^\alpha$ the partial derivatives $\frac{\partial^{\alpha_1}}{\partial\theta_1^{\alpha_1}}\cdots \frac{\partial^{\alpha_r}}{\partial\theta_r^{\alpha_r}}$.

\begin{proposition}\label{GNU2.2}
Assume that $h_n(\theta_1,\ldots, \theta_n)$, $n \ge 1$, satisfies conditions \eqref{Prop1}--\eqref{Prop3}, and suppose that
\[
|D^\alpha h_n(\theta_1, \ldots, \theta_n)| \le B
\]
for all $\theta_1, \ldots, \theta_n$, where $B$ is some positive constant, $\alpha = (\alpha_1, \ldots, \alpha_r)$, $r \le 4$, and $\alpha_j \ge 2$, $j=1,\ldots, r$, $\sum_{j=1}^r (\alpha_j -2) \le 2$. Furthermore, let $|\theta|_2 = 1$. Then,
\[
h_n(\theta_1, \ldots, \theta_n) = h_\infty + \frac{1}{6} \frac{\partial^3}{\partial \lambda^3} h_\infty(\lambda) \Big|_{\lambda=0} \sum_{j=1}^n\theta_j^3 + R_4,
\]
where the remainder term $R_4$ satisfies
\begin{equation}\label{RemTerm}
|R_4| \le \gamma B \sum_{j=1}^n \theta_j^4
\end{equation}
for a suitable constant $\gamma > 0$.
\end{proposition}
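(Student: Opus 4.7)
The plan is to Taylor-expand $h_n$ around the origin in $\R^n$, use the vanishing property \eqref{Prop2} and the symmetry \eqref{Prop3} to identify the surviving low-order Taylor coefficients as universal constants (made independent of $n$ by the consistency \eqref{Prop1}), and then handle the fourth-order remainder by upgrading any disallowed derivatives to admissible ones via \eqref{Prop2}. First I would observe that \eqref{Prop2} forces $\partial_j h_n$ to vanish identically on the hyperplane $\{\theta_j=0\}$, so any further derivative in a direction tangential to this hyperplane also vanishes there; consequently $D^\alpha h_n(0)=0$ whenever some component of $\alpha$ equals $1$. Combined with \eqref{Prop3}, among the Taylor coefficients at the origin up to order $3$ only the diagonal values $a:=\partial_j^2 h_n(0)$ and $b:=\partial_j^3 h_n(0)$ survive; they are independent of $j$ by \eqref{Prop3} and of $n$ by \eqref{Prop1} (collapsing extra coordinates to zero reduces $h_n$ to $h_1$). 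The same vanishing argument shows that at order $4$ only the pure term $\partial_j^4 h_n(0)$ and the paired $(2,2)$-term $\partial_i^2\partial_j^2 h_n(0)$ ($i\ne j$) are nonzero at the origin.

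Taylor's formula with integral remainder through order $3$, combined with $\sum_j\theta_j^2=1$ on the unit sphere, then yields
\[
h_n(\theta)=\Big(h_n(0)+\tfrac{a}{2}\Big)+\tfrac{b}{6}\sum_{j=1}^n\theta_j^3+\sum_{|\alpha|=4}\tfrac{4}{\alpha!}\,\theta^\alpha\int_0^1(1-t)^3 D^\alpha h_n(t\theta)\,dt.
\]
To reconstruct $h_\infty$, I would perform the same expansion for $h_k(\lambda/\sqrt k,\ldots,\lambda/\sqrt k)$ and let $k\to\infty$: the constants so assembled coincide with $h_\infty(1)$, while the coefficient $b/6$ matches $\tfrac{1}{6}h_\infty'''(0)$. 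A secondary contribution to $h_\infty(1)$ comes from the paired $(2,2)$-piece of the order-four integral, which on the sphere produces a constant $\tfrac{c_2}{8}$ (absorbed into $h_\infty(1)$) plus an explicit $-\tfrac{c_2}{8}\sum_j\theta_j^4$ piece.

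To bound the residual order-four integral I would treat each $\alpha$ with $|\alpha|=4$ separately. If every $\alpha_j\ge 2$ (i.e.\ $\alpha\in\{(4),(2,2)\}$), then $|D^\alpha h_n(t\theta)|\le B$ by hypothesis and the contribution is controlled directly by the Taylor monomial $\theta^\alpha$. For multi-indices with some $\alpha_k=1$, I would invoke the identity
\[
\partial_k h_n(\theta)=\theta_k\int_0^1\partial_k^2 h_n(\theta_1,\ldots,s\theta_k,\ldots,\theta_n)\,ds,
\]
a consequence of \eqref{Prop2} and the fundamental theorem of calculus. Iterating this for each unit exponent converts $D^\alpha h_n(t\theta)$ into some $D^\beta h_n$ whose components are all at least $2$, whose total order is at most $8$, and which satisfies $\sum_j(\beta_j-2)\le 2$; hence $|D^\beta h_n|\le B$ in view of the hypothesis, and every upgrade costs a further factor $\theta_k$.

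The principal obstacle is the very last step, namely showing that after summation over multi-indices and after subtraction of the spherical-constant pieces absorbed into $h_\infty(1)$, the collected monomials in $\theta$ are uniformly controlled by $\sum_j\theta_j^4$ rather than by a weaker quantity on the sphere (such as $|\theta|_2^4=1$ or $\sum_j|\theta_j|^3\le(\sum_j\theta_j^4)^{1/2}$). Securing this sharp form relies crucially on the admissibility condition $\sum_j(\alpha_j-2)\le 2$: it is exactly what allows every disallowed multi-index of order four to be upgraded to an admissible one of total order at most eight, so that the extra $\theta$-factors produced during the upgrade combine with $\theta^\alpha$ into monomials of the desired fourth-power form after the spherical-constant reductions.
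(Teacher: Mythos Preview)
The paper does not supply its own proof of this proposition; it is quoted from \cite{GNU17} (Theorem~2.2 specialized to $s=4$), so there is no in-paper argument to compare against.

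Your direct Taylor expansion at the origin has two genuine gaps. First, the identification $b=h_\infty'''(0)$ does not hold: running your own recipe on $g_k(\lambda):=h_k(\lambda/\sqrt k,\dots,\lambda/\sqrt k)$ yields $g_k'''(0)=b/\sqrt k\to 0$, and your upgrade trick shows that $g_k^{(4)}$ is locally uniformly bounded in $k$, so $h_\infty\in C^3$ with $h_\infty'''(0)=0$ regardless of $b$. Concretely, take $h_n(\theta)=\sum_j\theta_j^3 e^{-\theta_j^2}$: this satisfies \eqref{Prop1}--\eqref{Prop3} and the derivative bounds, has $b=6$ and $h_\infty\equiv 0$, yet $h_n(1/\sqrt n,\dots,1/\sqrt n)\sim n^{-1/2}$, which is \emph{not} $O(\sum_j\theta_j^4)=O(n^{-1})$. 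Thus the displayed cubic coefficient cannot literally be $h_\infty'''(0)$, and the matching step you propose would expose this rather than confirm it (the discrepancy is a transcription issue in the paper's restatement of \cite{GNU17}, not something your argument can repair).

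Second, the ``principal obstacle'' you flag is real and is not overcome by the sketch. After upgrading the multi-index $(1,1,1,1)$ you land on the monomial $\theta_{i_1}^2\theta_{i_2}^2\theta_{i_3}^2\theta_{i_4}^2$, whose sum over distinct indices is of order $1$ on the sphere while $\sum_j\theta_j^4$ can be as small as $1/n$; absorbing ``spherical constants'' into $h_\infty(1)$ does not help, because the integrals $\int_0^1(1-t)^3 D^\alpha h_n(t\theta)\,dt$ are themselves $\theta$-dependent and cannot be split off as constants. The argument in \cite{GNU17} avoids this entirely via a Lindeberg-type replacement: one swaps the coordinates $\theta_j$ one at a time for equal weights and Taylor-expands each single-coordinate move to fourth order using \eqref{Prop2}, so that the second-order increments cancel through $\sum_j\theta_j^2=1$ and the problematic off-diagonal sums never arise.
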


By Proposition \ref{GNU2.2}, the functionals $h_n(\theta)$ can be asymptotically expressed in terms of elementary polynomials, giving rise to an Edgeworth-type expansions. In fact, the functionals $h_n(\theta)$ can even be regarded as a general scheme which reflects common features of many Edgeworth-type asymptotic expansions due to a universal behaviour of distributions based upon many independent asymptotically negligible random variables.

We now choose $\theta = (\theta_1, \ldots, \theta_n) \in \S_2^{n-1}$ at random according to the uniform distribution $\nu_{2,n} = \mu_{2,n}$ and study concentration bounds for $h_n(\theta)$ as a functional on $\S_2^{n-1}$. In view of Proposition \ref{GNU2.2}, it is natural to center around $h_\infty$. One then needs to control the fluctuations of elementary polynomials of the form
\begin{equation}\label{ElemPol}
Q_m(\theta) = \sum_{j=1}^n a_j \theta_j^m.
\end{equation}
In \cite[Ch.\ 10.6\&10.7]{BCG23}, concentration results for $Q_m$ were proven if $p=2$ and $m=3,4$, and in principle, they are already sufficient for our purposes. However, we prefer to keep this paper self-contained, and indeed, it turns out that we may easily derive such inequalities for the polynomials $Q_m(\theta)$ on $\S_p^{n-1}$ equipped with the cone measure from the results from Section \ref{LSISphSec}. This generalizes the aforementioned concentration bounds to any $p \ge 2$ and any order $m > p$.

\begin{proposition}\label{DevElemPolS2}
	Let $n \ge 3$, $p \ge 2$, and let $m > p$ be a natural number. Consider a polynomial $Q_m(\theta)$ as in \eqref{ElemPol} on $\S_p^{n-1}$, and assume that the vector of its coefficients $a=(a_1, \ldots, a_n) \in \R^n$ satisfies $|a|_{p/(p-2)} \le 1$. Then, it holds that
	\[
	\int \exp\Big\{c_{m,p} n\Big|Q_m - \int Q_md\mu_{p,n}\Big|^{p/(m-\lceil p \rceil +1)}\Big\} d\mu_{p,n} \le 2
	\]
	for some constant $c_{m,p} > 0$. Here, $\lceil p \rceil$ is the smallest integer greater or equal $p$.
\end{proposition}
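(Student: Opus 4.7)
The plan is to apply Theorem \ref{HOSph}(1) to the centered functional $f := Q_m - \int Q_m\,d\mu_{p,n}$ with order parameter $d := m - \lceil p \rceil + 1$, so that the conclusion $\int \exp\{c_{p,d}\, n\, |f|^{p/d}\}\, d\mu_{p,n} \le 2$ matches the exponent $p/(m - \lceil p \rceil + 1)$ claimed in the proposition. After rescaling $f$ by an $(m,p)$-dependent constant $M$, the theorem will yield the result with $c_{m,p} := c_{p,d}/M^{p/d}$.

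The structure of $Q_m$ makes its derivatives easy to control. Since $Q_m(\theta) = \sum_j a_j \theta_j^m$ is a sum of one-variable polynomials, the tensor $Q_m^{(k)}$ is diagonal with non-zero entries $C_{m,k}\, a_j \theta_j^{m-k}$, where $C_{m,k} := m!/(m-k)!$. A generalized H\"older inequality with exponents $(q, kp, kp, \ldots, kp)$, which is admissible since $1/q + k/(kp) = 1$, together with the fact that $|v_i|_{kp} \le |v_i|_p = 1$ for $v_i \in \S_p^{n-1}$ (as $kp \ge p$ for $k \ge 1$), yields the uniform pointwise bound
\[
|Q_m^{(k)}(\theta)|_{\mathrm{op}(q)} \le C_{m,k} \Big(\sum_j |a_j|^q |\theta_j|^{(m-k)q}\Big)^{1/q}.
\]

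For intermediate orders $k = 1, \ldots, d-1$, I would integrate this against $\mu_{p,n}$ and apply the moment estimate $\int |\theta_1|^v\, d\mu_{p,n} \le c_{v,p}\, n^{-v/p}$ from Lemma \ref{Momente}, together with the interpolation $|a|_q \le n^{1/p}\,|a|_{p/(p-2)} \le n^{1/p}$ (which follows from $1/q - (p-2)/p = 1/p$ and the constraint on $a$). This gives
\[
\lVert Q_m^{(k)} \rVert_{\mathrm{op}(q),q} \le C'_{m,p}\, n^{-(m-k-1)/p} \le C'_{m,p}\, n^{-(d-k)/p},
\]
where the last step uses $m - 1 \ge d$, equivalent to $\lceil p \rceil \ge 2$. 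For the top order $k = d$, where $m - d = \lceil p\rceil -1$, I would apply the sharper H\"older with exponents $((p-1)/(p-2), p-1)$ to $\sum_j |a_j|^q |\theta_j|^{(\lceil p \rceil -1)q}$, using $(\lceil p \rceil - 1)\,p \ge p$ and $|\theta|_p = 1$ to control the $\theta$-factor by $1$, and the constraint to control the $a$-factor by $|a|_{p/(p-2)} \le 1$; this yields the pointwise bound $|Q_m^{(d)}(\theta)|_{\mathrm{op}(q)} \le C_{m,d}$.

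With these estimates in hand, choosing $M = M(m,p)$ large enough to absorb the constants $C_{m,k}$ and $C'_{m,p}$ (all independent of $n$) makes $f/M$ satisfy the hypotheses $\lVert (f/M)^{(k)} \rVert_{\mathrm{op}(q),q} \le (3^{1/q} \cdot 4 q^{1/p} n^{-1/p})^{d-k}$ and $\lVert (f/M)^{(d)} \rVert_{\mathrm{op}(q),\infty} \le 1$ of Theorem \ref{HOSph}(1), which then produces the target inequality. The main technical obstacle is the operator norm estimate for orders $k \ge p$, where the natural H\"older exponent $p/(p-k)$ becomes negative; this is circumvented by the uniform choice of exponents $(q, kp, \ldots, kp)$ above, which stays well defined for all $k \ge 1$. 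An edge case arises when $p$ is non-integer and $m = \lceil p \rceil$, giving $d = 1$; here the argument collapses to the top-order pointwise bound above, effectively providing the Lipschitz input needed to combine with \eqref{1stOrderCone}.
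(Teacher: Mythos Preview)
Your proposal is correct and follows essentially the same route as the paper: apply Theorem \ref{HOSph}(1) with $d = m - \lceil p\rceil + 1$, exploit the diagonal structure of $Q_m^{(k)}$, control the intermediate orders via the moment bounds of Lemma \ref{Momente}, and bound the top order pointwise by a H\"older step that produces $|a|_{p/(p-2)}$. The only cosmetic differences are that the paper bounds the intermediate norms through the $\mathrm{HS}(q)$-norm together with $|a|_\infty \le 1$ (yielding $n^{-(m-k-p+1)/p}$), whereas you use the interpolation $|a|_q \le n^{1/p}|a|_{p/(p-2)}$ (yielding the slightly sharper $n^{-(m-k-1)/p}$); both are $\le C\,n^{-(d-k)/p}$ since $\lceil p\rceil \ge 2$.
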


For $p=2$, $p/(p-2)$ has to be interpreted as $\infty$, hence regarding $|a|_{p/(p-2)}$ as the maximum norm of $a$.

\begin{proof}
	We apply Theorem \ref{HOSph} (1) for $d=m-\lceil p \rceil +1$. Note that the tensor of the $k$-th derivatives reads 
	\[
	Q_m^{(k)}(\theta) = (m)_k\mathrm{diag}(a_j \theta_j^{m-k}, j=1, \ldots, n),
	\]
	where $(m)_k$ denotes the falling factorial. In particular, using Lemma \ref{Momente},
	\begin{align*}
		\lVert Q_m^{(k)} \rVert_{\mathrm{op}(q),q} &\le \lVert Q_m^{(k)} \rVert_{\mathrm{HS}(q),q}
		= (m)_k\Big( \sum_{j=1}^n \int |a_j|^q |\theta_j|^{q(m-k)} d\mu_{p,n}\Big)^{1/q}\\
		&= (m)_k\Big(\sum_{j=1}^n |a_j|^q c_{m,p} \frac{1}{n^{q(m-k)/p}}\Big)^{1/q}
		\le c_{m,p} (m)_k n^{-(m-k-(p/q))/p}\\
  &= c_{m,p} (m)_k n^{-(m-k-p+1)/p} \le c_{m,p} (m)_k n^{-(m-k-\lceil p \rceil +1)/p}\\
  &= c_{m,p} (m)_kn^{-(d-k)/p}
	\end{align*}
	for any $k=1, \ldots, d-1=m-\lceil p \rceil$, where we have used that $|a|_{p/(p-2)} \le 1$ implies $|a|_\infty = \max_j|a_j| \le 1$.

    For $k=d=m-\lceil p \rceil +1$, one notes that
    \begin{align*}
	\lVert Q_m^{(d)} \rVert_{\mathrm{op}(q),q} &= m!\sup\Big\{\sum_{i=1}^n a_ix_i^{(1)}\cdots x_i^{(d)} \colon x^{(1)}, \ldots, x^{(d)} \in \S_p^{n-1}\Big\}\\
 &\le m!\sup\Big\{\sum_{i=1}^n a_ix_iy_i \colon x,y \in \S_p^{n-1}\Big\}\\
 &= m!\sup\Big\{\Big(\sum_{i=1}^n |a_i|^q|x_i|^q\Big)^{1/q} \colon x \in \S_p^{n-1}\Big\}.
	\end{align*}
 By H\"{o}lder's inequality with exponents $p/(p-q)$ and $p/q$, it follows that
 \[
     \Big(\sum_{i=1}^n |a_i|^q|x_i|^q\Big)^{1/q} \le \Big(\sum_{i=1}^n |a_i|^{pq/(p-q)}\Big)^{(p-q)/(pq)} \Big(\sum_{i=1}^n |x_i|^p\Big)^{1/p} = |a|_{p/(p-2)},
 \]
 where we have used $x \in \S_p^{n-1}$. In fact, it is not hard to verify that the last inequality is sharp (in the sense that the supremum is attained). These arguments remain true for $p=2$ (and are even simpler in this case).
 
 Altogether, this verifies the conditions of Theorem \ref{HOSph} (1) up to a constant depending on $m$ and $p$, so that the result follows after rescaling.
\end{proof}

In the simplest case $p=2$, $Q_3$ is hence of order $n^{-1}$, which gives back \cite[Cor.\ 10.6.2]{BCG23}, and for $m=4$, $Q_4$ is of order $n^{-3/2}$, which corresponds to \cite[Prop.\ 10.7.2]{BCG23}.

Putting everything together, we now obtain the desired concentration bounds for the functionals $h_n(\theta)$. Note that this is the type of result briefly mentioned at the end of \cite[Ch.\ 10.9]{BCG23} without giving details.

\begin{satz}\label{applsymm}
    Let $n \ge 3$. Assume that $h_n(\theta_1,\ldots, \theta_n)$ satisfies conditions \eqref{Prop1}--\eqref{Prop3}, and suppose that
\[
|D^\alpha h_n(\theta_1, \ldots, \theta_n)| \le B
\]
for all $\theta_1, \ldots, \theta_n$, where $B$ is some positive constant, $\alpha = (\alpha_1, \ldots, \alpha_r)$, $r \le 4$, and $\alpha_j \ge 2$, $j=1,\ldots, r$, $\sum_{j=1}^r (\alpha_j -2) \le 2$. Furthermore, let $|\theta|_2 = 1$, and set
\[
B^\ast := \Big|\frac{\partial^3}{\partial \lambda^3} h_\infty(\lambda) \Big|_{\lambda=0}\Big|.
\]
Then, for any $t \ge 0$,
    \[
    \mu_{2,n}(n|h_n(\theta_1, \ldots, \theta_n) - h_\infty| \ge t) \le 2 \exp \Big(- c \min\Big(\frac{1}{B^\ast}, \frac{1}{\gamma B}\Big) t\Big),
    \]
    where $\gamma$ is the constant from \eqref{RemTerm}.
\end{satz}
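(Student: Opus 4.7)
The plan is to reduce Theorem \ref{applsymm} to concentration bounds for elementary polynomials of the form $Q_m(\theta) = \sum_j \theta_j^m$ on $\S_2^{n-1}$, using the Edgeworth-type decomposition from Proposition \ref{GNU2.2}. Since $|\theta|_2 = 1$ on $\S_2^{n-1}$, Proposition \ref{GNU2.2} gives
\[
h_n(\theta) - h_\infty = \frac{\beta}{6}\, Q_3(\theta) + R_4(\theta), \qquad |R_4(\theta)| \le \gamma B\, Q_4(\theta),
\]
where $\beta := \partial_\lambda^3 h_\infty(\lambda)|_{\lambda = 0}$ satisfies $|\beta| = B^\ast$. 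A triangle inequality and a union bound then reduce the claim to producing sub-exponential tails for $n B^\ast|Q_3|/6$ of rate $\asymp 1/B^\ast$ and for $n\gamma B\, Q_4$ of rate $\asymp 1/(\gamma B)$.

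For the $Q_3$-piece I would invoke Proposition \ref{DevElemPolS2} with $p = 2$, $m = 3$ and all coefficients $a_j = 1$; the exponent appearing there is $p/(m - \lceil p \rceil + 1) = 1$, so that $n|Q_3 - \int Q_3\, d\mu_{2,n}|$ has a bounded exponential moment. By the reflection symmetry $\theta \mapsto -\theta$ of $\mu_{2,n}$ one has $\int Q_3\, d\mu_{2,n} = 0$, and Markov's inequality then yields $\mu_{2,n}(n|Q_3| \ge s) \le 2 e^{-cs}$ directly, producing the $B^\ast$-component of the bound.

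For the $R_4$-piece I would apply Proposition \ref{DevElemPolS2} with $p = 2$, $m = 4$; now the exponent is $2/3$, so
\[
\mu_{2,n}\bigl(|Q_4 - \E_{\mu_{2,n}} Q_4| \ge s\bigr) \le 2 e^{-cns^{2/3}}.
\]
By Lemma \ref{Momente} one has $\E_{\mu_{2,n}} Q_4 = 3/(n+2)$, so $n\E_{\mu_{2,n}} Q_4$ is an absolute constant; and on $\S_2^{n-1}$ the deterministic inequality $Q_4 \le (\max_j \theta_j^2)\sum_j \theta_j^2 \le 1$ holds. To convert the $2/3$-tail into the required linear tail for $n\gamma B\, Q_4$, I would split at $t \asymp n\gamma B$: in the range $t \lesssim n\gamma B$ the exponent $(t/(\gamma B))^{2/3} n^{1/3}$ dominates $t/(\gamma B)$, giving a bound of the desired shape, while for $t \gtrsim n\gamma B$ the event $\{n\gamma B\, Q_4 \ge t/2\}$ is impossible thanks to $Q_4 \le 1$.

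The main obstacle is precisely this last comparison. Proposition \ref{DevElemPolS2} for $m=4$ is a ``stretched sub-exponential'' bound with exponent $2/3$, which for large $t$ is strictly weaker than the linear tail claimed in the theorem, so it must be combined with the hard deterministic estimate $Q_4 \le 1$ to eliminate the large-$t$ regime. Once this conversion is completed, the two half-bounds combine via the union bound with all constants absorbed into the $\min(1/B^\ast, 1/(\gamma B))$ on the right-hand side.
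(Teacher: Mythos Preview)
Your overall strategy is correct and the $Q_3$-part is handled exactly as in the paper: Proposition \ref{DevElemPolS2} with $p=2$, $m=3$ gives exponent $1$, the mean vanishes by symmetry, and Markov finishes. The difference lies in the $Q_4$-part.

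The paper does \emph{not} use Proposition \ref{DevElemPolS2} for $m=4$ as stated (exponent $2/3$) and then convert via the deterministic bound $Q_4\le 1$. Instead, it goes back into the proof of that proposition and reruns the higher-order concentration argument with $d=m-2=2$ rather than $d=m-\lceil p\rceil+1=3$. This yields directly the (sub-optimal but sufficient) bound
\[
\int \exp\bigl(c_4\, n\,|Q_4-\textstyle\int Q_4\,d\mu_{2,n}|\bigr)\, d\mu_{2,n} \le 2,
\]
i.e., a linear exponent from the outset; the mean $\int Q_4\,d\mu_{2,n}\le c/n$ is then absorbed and Markov gives the claimed tail without any case split.

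Your route is a genuine alternative: you use only the \emph{statement} of Proposition \ref{DevElemPolS2} as a black box, at the cost of the extra splitting argument $t\lesssim n\gamma B$ versus $t\gtrsim n\gamma B$ combined with $Q_4\le 1$. The comparison $n^{1/3}(t/(\gamma B))^{2/3}\ge t/(\gamma B)$ for $t\le n\gamma B$ is correct, the mean shift is harmless since $n\E Q_4$ is bounded, and for $t>2n\gamma B$ the event is indeed empty. So your argument goes through; it is just slightly less direct than reopening the proof of Proposition \ref{DevElemPolS2} with the coarser choice $d=2$.
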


\begin{proof}
    Applying Proposition \ref{GNU2.2}, we obtain
    \begin{align*}
    &\mu_{2,n}(n|h_n(\theta_1, \ldots, \theta_n) - h_\infty| \ge t)\\
    \le \, &\mu_{2,n}\Big(n\Big| \frac{1}{6} \frac{\partial^3}{\partial \lambda^3} h_\infty(\lambda) \Big|_{\lambda=0} \sum_{j=1}^n\theta_j^3\Big| \ge t\Big)
    + \mu_{2,n}\Big(n\Big| \gamma B \sum_{j=1}^n \theta_j^4 \Big| \ge t\Big).
    \end{align*}
    By Proposition \ref{DevElemPolS2} for $p=2$ and $m=3$ and Markov's inequality, the first summand on the right-hand side may be bounded by $2\exp\{-c_3t/B^\ast\}$ for a suitable constant $c_3 > 0$.
    
    For the second summand, noting that here we do not center around the mean of the fourth order polynomial, we do not directly apply Proposition \ref{DevElemPolS2} for $m=2$ but slightly modify its proof. To this end, note that we also have the (sub-optimal) bound
    \[
	\int \exp\Big\{c_4 n\Big|Q_4 - \int Q_4d\mu_{2,n}\Big|\Big\} d\mu_{2,n} \le 2.
	\]
    To see this, one performs the same arguments as in the proof of Proposition \ref{DevElemPolS2} for $p=2$ but for $d=m-2$ (hence, $d=2$ in our case). On the other hand, using Lemma \ref{Momente}, the mean of $Q_4$ is bounded by $cn^{-1}$. Hence, by triangle inequality, suitable modifying the constant $c_4$, we arrive at
    \[
	\int \exp\{c_4 nQ_4\} d\mu_{2,n} \le 2.
	\]
    Now we may apply this to $Q_4(\theta) = \gamma B \sum_{j=1}^n \theta_j^4$, and together with Markov's inequality we arrive at the bound
    \[
    \mu_{2,n}\Big(n\Big| \gamma B \sum_{j=1}^n \theta_j^4 \Big| \ge t\Big) \le 2\exp\{-c_4t/(\gamma B)\}.
    \]
    This finishes the proof.
\end{proof}

Alternatively, one may combine Corollary 10.6.8 and Corollary 10.7.1 from \cite{BCG23}. Arguing similarly as in the proof of Theorem \ref{applsymm}, this yields
\[
    \mu_{2,n}((n-1)|h_n(\theta_1, \ldots, \theta_n) - h_\infty| \ge t) \le 5 \exp \Big(- \min\Big(\frac{2}{3B^\ast}, \frac{1}{78\gamma B}\Big) t\Big),
    \]
where $\gamma$ is the constant from \eqref{RemTerm}.

\appendix
\section{Auxiliary Tools}\label{AppAux}

\subsection{Concentration of measure}
For the reader's convenience, let us recall some elementary relations from the theory of measure concentration especially adapted to the framework of this article. All over this section, we consider a generic probability space $(\Omega,\mathcal{A},\mu)$. Moreover, $g$ is always assumed to be a real-valued measurable function on $\Omega$, and $\lVert g \rVert_r := (\int |g|^r d\mu)^{1/r}$ denotes its $L^r$-norm for any $r > 0$.

Controlling the $L^r$-norms of $g$ implies a number of exponential moment and tail inequalities which can be found all over the literature in various settings and reformulations. For part (1) of the following lemma, we lean on the presentation in \cite[Eq.\ (2.17)]{BGS19}, while (2) is a slight generalization of parts of the proof of \cite[Th.\ 3.6]{SS20}.

\begin{lemma}\label{ConcLemma}
    \begin{enumerate}
        \item Assuming that $\lVert g \rVert_k \le \gamma k$ for any $k \in \N$ and some constant $\gamma > 0$, it holds that
        \[
        \int \exp\Big\{ \frac{|g|}{2\gamma e}\Big\} d\mu \le 2.
        \]
        \item Assume that there are constants $C_1, \ldots, C_d \ge 0$ such that for any $r \ge q$
  \[
    \lVert g \rVert_r \le \sum_{k = 1}^{d} (C_k r)^{k/p}.
  \]
  Let $L := |\{ k : C_k > 0\}|$ and $\ell := \min \{ k \in \{1,\ldots,d\} : C_k > 0 \}$. Then, we have
        \[
        \mu(|g| \ge t) \le 2 \exp\Big\{-\frac{\log(2)}{q(Le)^{p/\ell}} \min_{k=1,\ldots,d} \frac{t^{p/k}}{C_k}\Big\}
        \]
        for any $t \ge 0$.
    \end{enumerate}
\end{lemma}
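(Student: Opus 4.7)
For part (1), I would expand the exponential as a power series and use the moment hypothesis termwise. Writing
\[
\int \exp(c|g|)\, d\mu \;=\; \sum_{k=0}^{\infty} \frac{c^k}{k!}\, \lVert g\rVert_k^{\,k} \;\le\; \sum_{k=0}^{\infty} \frac{c^k (\gamma k)^k}{k!},
\]
and applying the Stirling-type bound $k! \ge (k/e)^k$ reduces this to the geometric series $\sum_k (c\gamma e)^k$. The choice $c = 1/(2\gamma e)$ makes each term at most $2^{-k}$, so the sum equals $2$, giving exactly the claim.

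For part (2), the strategy is Markov's inequality applied to $|g|^r$ at an optimal moment order. For any $r \ge q$, the hypothesis and the elementary bound that a sum of $L$ nonnegative reals is at most $L$ times the maximum yield
\[
\mu(|g|\ge t) \;\le\; t^{-r}\lVert g\rVert_r^{\,r} \;\le\; L^{r}\max_{k\,:\,C_k>0}\bigl((C_k r)^{k/p}/t\bigr)^{r}.
\]
Setting $r^{\ast} := \min_{k\,:\,C_k>0} (t/(Le))^{p/k}/C_k$ forces every factor $L(C_k r^{\ast})^{k/p}/t$ to be at most $1/e$, so whenever $r^{\ast}\ge q$ we may take $r = r^{\ast}$ to conclude $\mu(|g|\ge t)\le e^{-r^{\ast}}$.

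It then remains to massage $e^{-r^{\ast}}$ into the form stated. Since $\ell\le k$ and $Le\ge 1$, we have $(Le)^{p/k}\le (Le)^{p/\ell}$ uniformly in $k$ with $C_k>0$, so the constant $(Le)^{p/\ell}$ may be pulled out of the minimum in the definition of $r^{\ast}$, giving $r^{\ast}\ge (Le)^{-p/\ell}\min_k t^{p/k}/C_k$. Because $\log(2)/q\le 1$, this last expression dominates $\log(2)\, q^{-1}(Le)^{-p/\ell}\min_k t^{p/k}/C_k$, and the factor of $2$ in front of the exponential is free. The remaining regime $r^{\ast}<q$ is handled by the trivial bound $\mu(|g|\ge t)\le 1$: by definition of $r^{\ast}$ there is some $k^{\ast}$ with $t^{p/k^{\ast}}/C_{k^{\ast}} < q(Le)^{p/k^{\ast}} \le q(Le)^{p/\ell}$, so the exponent appearing in the claimed bound is strictly less than $\log 2$, making the right-hand side at least $1$.

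The only genuinely delicate point is the extraction of the common constant $(Le)^{p/\ell}$ from a minimum whose natural form carries a $k$-dependent $(Le)^{p/k}$; this is what the ordering $\ell\le k$ together with $Le\ge 1$ is for. The split at $r^{\ast}=q$ is routine, and part (1) is essentially a computation once the Stirling bound is invoked.
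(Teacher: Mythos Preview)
Your argument is correct and follows essentially the same route as the paper's proof: part (1) is identical, and for part (2) both proofs apply Markov's inequality at an optimized moment order $r$ determined by $\min_k t^{p/k}/C_k$, use the sum-to-max bound to produce the factor $L$, extract the uniform constant $(Le)^{p/\ell}$ via $(Le)^{p/k}\le (Le)^{p/\ell}$ for $k\ge \ell$, and cover the small-$t$ regime with the trivial bound. The only cosmetic difference is that the paper first proves $\mu(|g|\ge (Le)t)\le e^{-\eta_g(t)}$ and then rescales (carrying an $e^q$ prefactor which is afterwards converted to $2$), whereas you fold the $Le$ into the definition of $r^\ast$ from the start and treat the two regimes separately; neither organization offers a real advantage over the other.
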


\begin{proof}
    To see (1), note that  setting $c = 1/(2 \gamma e)$ and using $k! \ge (\frac{k}{e})^k$, we have
\[
\int \exp(c|g|) d\mu = 1 + \sum_{k=1}^{\infty} c^k \frac{\int |g|^k d\mu}{k!} 
\le 1 + \sum_{k=1}^{\infty} (c \gamma)^k \frac{k^k}{k!}
\le 1 + \sum_{k=1}^{\infty} (c \gamma e)^k = 2.
\]

To see (2), first note that by Chebyshev's inequality we have for any $r \ge 1$
  \begin{equation}  \label{eqn:ChebyshevWithLp}
    \mu (|g| \ge e \lVert g \rVert_r) \le \exp(-r).
  \end{equation}
  Now consider the function
  \[
    \eta_g(t) := \min \left\{ \frac{t^{p/k}}{C_k} \colon k = 1, \ldots, d \right\},
  \]
  with $x/0$ being understood as $\infty$. If we assume $\eta_g(t) \ge q$, we can estimate $e \lVert g \rVert_{\eta_g(t)} \le e \sum_{k = 1}^{d} \mathbbm{1}_{\{C_k \neq 0\}} t = Let$, so that an application of equation \eqref{eqn:ChebyshevWithLp} to $r = \eta_g(t)$ yields
  \[
    \mu(|g| \ge (Le)t) \le \mu(|g| \ge e\lVert g \rVert_{\eta_g(t)}) \le \exp\left( - \eta_g(t) \right).
  \]
  Combining it with the trivial estimate $\mu(\cdot) \le 1$ (in the case $\eta_f(t) \le q$) gives
  \[
    \mu(|g| \ge (Le)t) \le e^q \exp(-\eta_g(t)).
  \]
  
  To remove the $Le$ factor, rescaling the function by $Le$ and using the estimate $\eta_{(Le)g}(t) \ge \eta_g(t)/(Le)^{p/\ell}$ yields
  \[
    \mu(|g| \ge t) \le e^q \exp \left( - \frac{1}{(Le)^{p/\ell}} \eta_g(t) \right).
  \]
  In a last step, note that
  \[
  e^q\exp(- (Le)^{-p/\ell} \eta_g(t)) \le 2 \exp( - \log(2) (Le)^{-p/\ell} \eta(t)/q)
  \]
  in the nontrivial regime $(Le)^{-2/\ell} \eta_g(t) \ge q$, so that
  \[
    \mu(|g| \ge t) \le 2 \exp \left( - \frac{\log(2)}{q(Le)^{p/\ell}} \eta_g(t) \right)
  \]
  as claimed in (2).
\end{proof}

\subsection{Calculations of moments}
In this section, we explicitly calculate and give qualitative estimates of the (absolute) moments of several random elements which appear in the course of this note. First, we consider negative moments of the $\ell_p^n$-norm of an $n$-dimensional vector which has generalized Gaussian distribution $\mathcal{N}_p$.

\begin{lemma}\label{NegMompgenG}
    Let $Z=(Z_1,\ldots,Z_n)$ be a random vector with i.i.d.\ coordinates of distribution $\mathcal{N}_p$, and let $v > 0$. For any $n > v$, it holds that
    \[
    \E  \lvert Z\rvert_p^{-v} = \frac {\Gamma(\frac{n-v}{p})}{p^{v/p} \Gamma(\frac n p)}.
    \]
    Moreover, if $v \le p$, this expression is bounded by $(n-v)^{-1}n^{1-v/p}$.
\end{lemma}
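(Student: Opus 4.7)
The plan is to compute $\E \lvert Z \rvert_p^{-v}$ via the Gamma integral representation and then identify the result as a Beta integral. Write $\lvert Z \rvert_p^{-v} = (\lvert Z \rvert_p^p)^{-v/p}$ and use the identity
\[
s^{-v/p} = \frac{1}{\Gamma(v/p)} \int_0^\infty t^{v/p-1} e^{-ts}\, dt, \qquad s > 0,
\]
applied to $s = \lvert Z \rvert_p^p = \sum_{i=1}^n |Z_i|^p$. After interchanging integration and expectation (justified since everything is non-negative), the expectation turns into the Laplace transform of $\lvert Z \rvert_p^p$, which factorises by independence of the $Z_i$ into $(\E e^{-t|Z_1|^p})^n$.

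Next I would compute $\E e^{-t|Z_1|^p}$ explicitly. Using the density $f_p(x) = c_p e^{-|x|^p/p}$ and the substitution $u = (t+1/p)x^p$, a short calculation together with the identification $2c_p \Gamma(1+1/p) = p^{-1/p}$ yields $\E e^{-t|Z_1|^p} = (1+pt)^{-1/p}$. Plugging this in and substituting $s = pt$ gives
\[
\E \lvert Z \rvert_p^{-v} = \frac{p^{-v/p}}{\Gamma(v/p)} \int_0^\infty s^{v/p-1}(1+s)^{-n/p}\, ds = \frac{p^{-v/p}}{\Gamma(v/p)} \cdot B\Big(\frac{v}{p}, \frac{n-v}{p}\Big),
\]
where the Beta integral converges precisely when $n > v$. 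Rewriting the Beta function in terms of Gamma functions produces the claimed formula.

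For the bound in the regime $v \le p$, the natural tool is Wendel's double inequality: for $0 < s \le 1$ and $x > 0$,
\[
\Big(\frac{x}{x+s}\Big)^{1-s} \le \frac{\Gamma(x+s)}{x^s \Gamma(x)} \le 1.
\]
Applying the left inequality with $x = (n-v)/p$ and $s = v/p \in (0,1]$ yields
\[
\frac{\Gamma((n-v)/p)}{p^{v/p}\Gamma(n/p)} \le \frac{1}{(n-v)^{v/p}} \Big(\frac{n}{n-v}\Big)^{1-v/p} = \frac{n^{1-v/p}}{n-v},
\]
as required.

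The computation is essentially routine; the only place to be careful is verifying the constant in the Laplace transform of $|Z_1|^p$ (so that the $c_p$ and the factor $p^{1/p}$ cancel cleanly to give $(1+pt)^{-1/p}$) and checking that the Beta integral converges exactly under the hypothesis $n>v$. The Wendel step for the second claim is standard but it is worth noting that it is sharper than the cruder bound one would get from monotonicity of $\Gamma$ alone, and it is what produces the factor $(n-v)^{-1}$ rather than a weaker $n^{-v/p}$.
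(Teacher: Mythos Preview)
Your proof is correct and follows essentially the same route as the paper: Gamma integral representation of $s^{-v/p}$, factorisation by independence, explicit computation of $\E e^{-t|Z_1|^p}=(1+pt)^{-1/p}$, and identification of the resulting integral as a Beta integral. For the bound the paper invokes \cite[Th.\ 2]{Jam13} with $x=(n-v)/p$, $y=v/p$, which is precisely Wendel's inequality in the form you state, so the two arguments coincide.
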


It is no problem to derive similar bounds for $v \ge p$ as well, however we only need the case $v=q$ in this note anyway.

\begin{proof}
Note that for real numbers $p >0, z>0, s>0$, we have by definition of the Gamma and Beta functions 
\begin{align}
\int_0^{\infty} \exp[-\lambda z ] \lambda^{s}  \frac{d\lambda} {\lambda} &= z^{-s} \Gamma(s), \label{gamma}\quad \text{and}\\
\int_0^{\infty}   (p \lambda +  1)^{-z} \lambda^{s}\frac {d\lambda}{\lambda} &= p^{-s}\frac{\Gamma(z-s) \Gamma(s)}{\Gamma(z)}, \label{beta} \text{ provided that }  z > s.
\end{align}
    Hence, it follows that
\begin{align*}
 \Gamma(v/p) \E  \lvert Z\rvert_p^{-v} &= \E \int_0^{\infty} \exp\{-\lambda (|Z_1|^p + \ldots + |Z_n|^p)\} \lambda^{v/p}\frac {d\lambda} {\lambda}\\
 &=  \int_0^{\infty} \Big(\E \exp\{-\lambda |Z_1|^p\} \Big)^n  \lambda^{v/p}\frac {d\lambda}{\lambda}\\
&=  \int_0^{\infty}  \Big(\int c_p\exp\{-(\lambda p +1)\frac{|x|^p} {p} \} dx \Big)^n \lambda^{v/p} \frac {d\lambda}{\lambda}\\
&= \int_0^{\infty}   (p \lambda +  1)^{-n/p} \lambda^{v/p}\frac {d\lambda}{\lambda} = \frac {\Gamma(v/p) \Gamma(\frac{n-v}{p})}{p^{v/p} \Gamma(\frac n p)}.
\end{align*}
Moreover, applying \cite[Th.\ 2]{Jam13} for $x= (n-v)/p$ and $y = v/p \le 1$ yields
\[
\frac{\Gamma(\frac{n}{p})}{\Gamma(\frac{n-v}{p})} \ge \frac{n-v}{p} \Big(\frac{n}{p}\Big)^{\frac{q}{p}-1}
\]
and hence the result.
\end{proof}

Slightly extending these arguments, we can also control the (absolute) moments of an arbitrary entry of $\theta \in \S_p^{n-1}$, say $\theta_1$, under the cone measure $\mu_{p,n}$.

\begin{lemma}\label{Momente}
	For any $v \ge 0$, we have
	\[
	\int_{\S_p^{n-1}} |\theta_1|^v d\mu_{p,n} = \frac{ \Gamma(\frac{1 + v}{p})} {\Gamma(\frac 1 p)} \cdot \frac{\Gamma(\frac{n} {p})} {\Gamma(\frac{v+n}{p})}
 \le \begin{cases}
     n^{-1}(n+v)^{1-v/p}, & 0\le v \le p,\\
     (p+1)^{v/p-1}n^{-v/p}, & p \le v \le 2p,\\
     (1+v)^{v/p-1} n^{-1}(n+p)^{1-v/p}, & v \ge 2p.
 \end{cases}
	\]
 In particular, if $v \in [v_0, v_1]$ for $0 < v_0 \le v_1 < \infty$, then
 \[
 \int_{\S_p^{n-1}} |\theta_1|^v d\mu_{p,n} \le c_{p,v_0,v_1} n^{-v/p}.
 \]
	Moreover, if $v \in \N$ is an odd number, we have
 \[
 \int_{\S_p^{n-1}} \theta_1^v d\mu_{p,n} = 0.
 \]
\end{lemma}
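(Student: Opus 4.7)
The plan is to realize $\mu_{p,n}$ via the probabilistic representation of Proposition \ref{p-GaussCone}: if $Z=(Z_1,\ldots,Z_n)$ has i.i.d.\ $\mathcal{N}_p$ components, then $\theta:=Z/\lvert Z\rvert_p\sim\mu_{p,n}$ is independent of $\lvert Z\rvert_p$. Since $\lvert Z_1\rvert^v = \lvert Z\rvert_p^v\,|\theta_1|^v$, independence of the two factors immediately gives
\[
\int_{\S_p^{n-1}}|\theta_1|^v\,d\mu_{p,n} \;=\; \frac{\E\lvert Z_1\rvert^v}{\E\lvert Z\rvert_p^v}.
\]
The numerator is computed by the substitution $t=x^p/p$ in $\int c_p|x|^v e^{-|x|^p/p}dx$, giving $\E\lvert Z_1\rvert^v = p^{v/p}\Gamma((v+1)/p)/\Gamma(1/p)$. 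The radial moment $\E\lvert Z\rvert_p^v$ is obtained in full analogy with Lemma \ref{NegMompgenG}, using the integral identities \eqref{gamma} and \eqref{beta} with positive exponent (equivalently, observing that $\lvert Z_i\rvert^p$ is $\Gamma(1/p,p)$-distributed, so $\lvert Z\rvert_p^p\sim\Gamma(n/p,p)$), and equals $p^{v/p}\Gamma((n+v)/p)/\Gamma(n/p)$. The ratio produces the announced closed-form expression.

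The three upper bounds then follow from the exact formula by applying Wendel's inequality $\Gamma(x+s)/\Gamma(x)\le x^s$ for $x>0$, $s\in[0,1]$ (with its consequence $\Gamma(x)/\Gamma(x+s)\le (x+s)^{1-s}/x$), combined with the recurrence $\Gamma(y+1)=y\Gamma(y)$. For $0\le v\le p$, set $s=v/p$ and apply Wendel to both ratios to obtain $\Gamma((v+1)/p)/\Gamma(1/p)\le p^{-v/p}$ and $\Gamma(n/p)/\Gamma((n+v)/p)\le p^{v/p}(n+v)^{1-v/p}/n$; their product is the first bound. For $p\le v\le 2p$, I first compute the exact endpoints $\E|\theta_1|^p = 1/n$ and $\E|\theta_1|^{2p} = (p+1)/(n(n+p))$ by iterated use of $\Gamma(y+1)=y\Gamma(y)$, then invoke log-convexity of $v\mapsto\log\E|\theta_1|^v$ (H\"{o}lder's inequality) to interpolate $\E|\theta_1|^v\le (\E|\theta_1|^p)^{2-v/p}(\E|\theta_1|^{2p})^{v/p-1} = (p+1)^{v/p-1}(n+p)^{1-v/p}/n$, and weaken this via $(n+p)^{1-v/p}\le n^{1-v/p}$ (since $v/p\ge 1$) to the stated form. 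For $v\ge 2p$, apply iteratively the shift identity $\E|\theta_1|^v = \tfrac{v-p+1}{n+v-p}\,\E|\theta_1|^{v-p}$ (a direct consequence of $\Gamma(y+1)=y\Gamma(y)$ in the closed formula) until the exponent drops to $v_0:=v-mp\in[p,2p]$, where the sharper Case~2 bound $\E|\theta_1|^{v_0}\le (p+1)^{v_0/p-1}(n+p)^{1-v_0/p}/n$ applies. Bounding each of the $m$ shift factors by $(v+1)/(n+p)$ (since $v-kp\ge v_0\ge p$ and $v-kp+1\le v+1$), using the algebraic identity $m-v/p+1 = 1-v_0/p$, and finally absorbing the surplus factor $((p+1)/(v+1))^{v_0/p-1}\le 1$, collapses the product into the third bound.

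The vanishing of odd-order moments is immediate from symmetry: the density of $\mathcal{N}_p$ is even, so $Z_1\stackrel{d}{=}-Z_1$; since flipping the sign of $Z_1$ does not affect $\lvert Z\rvert_p$, this implies $\theta_1\stackrel{d}{=}-\theta_1$, whence $\int\theta_1^v\,d\mu_{p,n}=0$ for every odd $v\in\N$. The ``in particular'' statement for $v\in[v_0,v_1]$ follows directly from the three bounds combined with the elementary estimates $(n+v)^{1-v/p}$, $(n+p)^{1-v/p}\le c_{p,v_0,v_1}n^{1-v/p}$ for $n\ge 3$. The delicate point of the argument is Case~3: the bookkeeping needed to align the iterated shift factors with the interpolated Case~2 bound so that the exponents of $v+1$, $n+p$, and $n$ reproduce exactly the target form.
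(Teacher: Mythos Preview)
Your proof is correct, and for the closed-form identity your route is actually slightly cleaner than the paper's: you exploit the independence of $\theta$ and $\lvert Z\rvert_p$ (Proposition~\ref{p-GaussCone}) to split $\E\lvert Z_1\rvert^v=\E\lvert\theta_1\rvert^v\cdot\E\lvert Z\rvert_p^v$ and evaluate each factor separately, whereas the paper keeps the joint expectation $\E[\lvert Z_1\rvert^v\lvert Z\rvert_p^{-v}]$ and unwinds it via the Gamma integral representation~\eqref{gamma} and the Beta identity~\eqref{beta}. Both land on the same formula.

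The genuine methodological difference is in the three upper bounds. The paper handles all three regimes uniformly by applying the Gamma ratio inequalities of Jameson~\cite[Th.~2]{Jam13} to each of the two ratios $\Gamma((1+v)/p)/\Gamma(1/p)$ and $\Gamma(n/p)/\Gamma((v+n)/p)$ with shift parameter $y=v/p$; the case split reflects whether $y\in[0,1]$, $y\in[1,2]$, or $y\ge 2$. Your argument is more self-contained: Wendel for Case~1, exact endpoint values plus H\"{o}lder interpolation for Case~2, and the recursive shift identity $\E|\theta_1|^v=\frac{v-p+1}{n+v-p}\E|\theta_1|^{v-p}$ iterated down to Case~2 for Case~3. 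I checked the bookkeeping in Case~3 (the identity $m-v/p+1=1-v_0/p$ indeed collapses the $(n+p)$-exponents, and $((p+1)/(v+1))^{v_0/p-1}\le 1$ absorbs the surplus). What your approach buys is that it avoids citing an external Gamma-ratio result; what the paper's approach buys is brevity and uniformity across the three cases. Your Case~2 bound $(p+1)^{v/p-1}(n+p)^{1-v/p}/n$ before the final weakening is in fact sharper than the stated one, which is a nice byproduct.
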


\begin{proof}
	Let $Z=(Z_1, \ldots, Z_n)$ be a vector of of i.i.d.\ random variables with distribution $\mathcal{N}_p$. Then, we have
	\begin{align}
		\int |\theta_1|^v d\mu_{p,n} &= \E \Big[|Z_1|^v\Big(\sum_{j=1}^n |Z_j|^p\Big)^{-v/p}\Big]
		= \E \Big[|Z_1|^v\Big(\sum_{j=1}^n |Z_j|^p\Big)^{-v/p}\Big] \notag \\
		&= \frac{1}{\Gamma(v/p)}  \int_0^{\infty} \E |Z_1|^v   \exp\Big[ - \lambda(|Z_1|^p + \sum_{j=2}^n |Z_j|^p)\Big] \lambda^{v/p} \frac{d\;\lambda}{\lambda} \label{appgamma+} ,
	\end{align}
	using \eqref{gamma} with $s=v/p$. Since $Z_j$ has distribution $\mathcal{N}_p$ we have for $v\ge 0$ by change of variables
	\begin{align}
		\E |Z_1|^v \exp[ -\lambda |Z_1|^p] &= \int |z|^v \exp[ - (1+\lambda\;p) \frac{|z|^p} {p} ] c_p dz \notag \\
		&= (1+\lambda\;p)^{- \frac{v+1}{p}} \int |z|^v \exp[ -  \frac{|z|^p} {p} ]c_p dz  \notag \\
		&= (1+\lambda\;p)^{- \frac{v+1}{p}} p^{\frac{v}{p}}  \frac{\Gamma(\frac{1 + v}{p})}{\Gamma(1/p)}.\label{moment+}
	\end{align}
	
	Thus, by independence of $Z_1, \ldots, Z_n$, \eqref{appgamma+} factorizes into one factor \eqref{moment+} and $n-1$ factors \eqref{moment+} with $v=0$. Hence, \eqref{appgamma+} turns into
	\begin{align}\label{PrelimEst+}
		\frac{1}{\Gamma(v/p)} \frac{\Gamma(\frac{1 + v}{p})}{\Gamma(1/p)} p^{\frac{v}{p}} \int_{0}^{\infty}  (1+\lambda\;p)^{- \frac{v+n}{p}} \lambda^{v/p} \frac{d\;\lambda}{\lambda}.
	\end{align}
	The integral on the right hand side may be evaluated by \eqref{beta} with $z=\frac{v+n}{p}$ and $s= v/p$, leading to
	\begin{equation}\int_{0}^{\infty}  (1+\lambda\;p)^{- \frac{v+n}{p}} \lambda^{v/p} \frac{d\;\lambda}{\lambda}= p^{-v/p} \frac{\Gamma(\frac{n}{p})\Gamma(\frac{v}{p})}{\Gamma(\frac{v+n}{p})}. \label{appbeta+}
	\end{equation}
	Note that here we used $z-s= \frac{n}{p}>0$. Summarizing we conclude by combining \eqref{PrelimEst+} and \eqref{appbeta+}
	\[
		\int |\theta_1|^v d\mu_{p,n} = \frac{ \Gamma(\frac{1 + v}{p})\Gamma(\frac{n}{p})} {\Gamma(\frac{v+n}{p}) \Gamma(\frac 1 p)}\\
		= \frac{ \Gamma(\frac{1 + v}{p})} {\Gamma(\frac 1 p)} \cdot \frac{\Gamma(\frac{n} {p})} {\Gamma(\frac{v+n}{p})}.
	\]
	
    Moreover, applying \cite[Th.\ 2]{Jam13} on the two Gamma function ratios, each time with $y := \frac{v}{p}$, leads to
    \[
    \frac{ \Gamma(\frac{1 + v}{p})} {\Gamma(\frac 1 p)} \cdot \frac{\Gamma(\frac{n} {p})} {\Gamma(\frac{v+n}{p})} \le \begin{cases} (\frac{1}{p})^{v/p} \cdot (\frac{n}{p}(\frac{n+v}{p})^{v/p-1})^{-1}, & 0\le v\le p,\\
    \frac{1}{p}(\frac{1}{p}+1)^{v/p-1} \cdot (\frac{n}{p})^{-v/p}, & p \le v \le 2p,\\
    \frac{1}{p}(\frac{1+v}{p})^{v/p-1} \cdot (\frac{n}{p}(\frac{n}{p}+1)^{v/p-1})^{-1}, & v \ge 2p,
    \end{cases}
    \]
    These expressions can be simplified as claimed in the lemma.

    Finally, the last assertion of the lemma simply follows by symmetry, i.\,e., the fact that the distribution of $\theta_1$ under $\mu_{p,n}$ is invariant under multiplication with $-1$.
\end{proof}

Note that \cite[Th.\ 2]{Jam13} also yields lower bounds which essentially imply sharpness of the estimates given above.

\subsection{Matrix calculus}\label{sec:MatCal}

Let $f \colon \R^{m\times \ell} \to \R$ be a differentiable function (for instance, we may require $f$ to have continuous partial derivatives in any entry $X_{ij}$ of the underlying matrix $X$). Then, we may define the derivative of $f$ with respect to the matrix $X$ as the $m \times \ell$-matrix $\nabla f(X) \equiv \partial f(X)/(\partial X)$ with entries
\[
(\nabla f(X))_{ij} = \Big(\frac{\partial f(X)}{\partial X}\Big)_{ij} = \frac{\partial f(X)}{\partial X_{ij}}.
\]
We can also adapt these definitions to the case where $m=\ell=n$ and we consider symmetric matrices only: indeed, in this case one takes partial derivatives with respect to $X_{ij}$ for any $i \le j$ and defines $\nabla f(X)$ as the corresponding symmetric $n \times n$ matrix.

Similarly, we may consider the situation where $M \colon \R \to \R^{m \times \ell}$ is a matrix-valued differentiable function (e.\,g., we may assume that every component is differentiable). In this case, we define the derivative of $g$ with respect to $t$ to be the $m \times \ell$ matrix $dM(t)/(dt)$ with entries
\[
\Big(\frac{\partial M(t)}{\partial t}\Big)_{ij} = \frac{\partial M_{ij}(t)}{\partial t}.
\]

Let us state some results for elementary types of matrix functionals, where $m=\ell=n$ and the matrices are assumed to be invertible.

\begin{lemma}\label{ElemDer}
Let $M \in \R^{n \times n}$ be an invertible matrix which depends on some real parameter $t$.
    \begin{enumerate}
        \item We have
        \[
        \frac{\partial M^{-1}}{\partial t} = -M^{-1} \frac{\partial M}{\partial t} M^{-1}.
        \]
        \item We have
        \[
        \frac{\partial M^2}{\partial t} = \frac{\partial M}{\partial t} M + M \frac{\partial M}{\partial t}.
        \]
        \item We have
        \[
        \frac{\partial M^{-2}}{\partial t} = -M^{-1} \frac{\partial M}{\partial t} M^{-2} - M^{-2} \frac{\partial M}{\partial t} M^{-1}.
        \]
    \end{enumerate}
\end{lemma}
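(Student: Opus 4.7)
The plan is to derive all three identities from the product (Leibniz) rule for matrix-valued differentiable functions, together with the fundamental identity $M M^{-1} = I$. The Leibniz rule $\partial (AB)/\partial t = (\partial A/\partial t) B + A (\partial B/\partial t)$ for matrix-valued functions $A, B$ of $t$ follows entry-wise from the ordinary product rule, since $(AB)_{ij} = \sum_k A_{ik} B_{kj}$ and differentiation commutes with finite sums. I would state this once at the outset and then apply it repeatedly, being careful that matrix multiplication is non-commutative so the order must be preserved.

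For (1), I would differentiate the identity $M(t) M(t)^{-1} = I$ with respect to $t$. The right-hand side has derivative $0$, while Leibniz on the left yields
\[
\frac{\partial M}{\partial t} M^{-1} + M \frac{\partial M^{-1}}{\partial t} = 0.
\]
Multiplying on the left by $M^{-1}$ isolates $\partial M^{-1}/\partial t = - M^{-1} (\partial M/\partial t) M^{-1}$. For (2), I would simply apply Leibniz to $M^2 = M \cdot M$ to obtain the stated formula directly, keeping the non-commutative order intact.

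For (3), I would apply Leibniz to the factorization $M^{-2} = M^{-1} \cdot M^{-1}$, which gives
\[
\frac{\partial M^{-2}}{\partial t} = \frac{\partial M^{-1}}{\partial t} M^{-1} + M^{-1} \frac{\partial M^{-1}}{\partial t},
\]
and then substitute the formula from (1) into each term. This yields the desired expression after recognizing $M^{-1} M^{-1} = M^{-2}$.

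None of the three parts presents a genuine obstacle; these are elementary consequences of the product rule. The only point requiring a bit of care is a remark at the start ensuring that the differentiability hypothesis makes sense: the invertibility of $M(t)$ is preserved in a neighbourhood of any $t_0$ at which $M$ is continuous (by continuity of $\det$), so $M^{-1}$ is well-defined and differentiable there, with entries rational in those of $M$ via Cramer's rule. This justifies the manipulations above.
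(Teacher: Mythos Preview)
Your proposal is correct and follows essentially the same approach as the paper, which simply states that (1) and (2) are immediate and that (3) follows by combining both. Your explicit use of the Leibniz rule and the differentiation of $MM^{-1}=I$ is exactly what ``immediate'' means here, and your derivation of (3) via $M^{-2}=M^{-1}M^{-1}$ plus (1) is an equivalent variant of the paper's ``combine (1) and (2)'' (which amounts to applying (1) to $M^2$ and then inserting (2)).
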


\begin{proof}
    (1) and (2) are immediate, and (3) follows from combining both.
\end{proof}

For any differentiable functions $f \colon \R^{m \times \ell} \to \R$ and $M \colon \R \to \R^{m \times \ell}$, $f(M(t))$ is a real-valued function of a real parameter whose derivatives are given by
\begin{equation}\label{MatrixChain}
	\frac{\partial g(t)}{\partial t} = \Big\langle \nabla f(g(t)), \frac{\partial g(t)}{\partial t} \Big\rangle_\mathrm{HS}.
\end{equation}
Indeed, this follows from the matrix calculus chain rule as stated in \cite[Lem.\ 2.2 (2)]{GS23}.

\subsection{Geometric aspects}\label{sec:Geom}

Recall that if $(M,g)$ is a compact connected smooth Riemannian manifold with Riemannian metric $g$ equipped with the normalized Riemannian volume element $d\nu$, a logarithmic Sobolev inequality holds once there exists a strictly positive lower bound on the Ricci curvature tensor, i.\,e., $c(M) > 0$, where $c(M)$ is the infimum of the Ricci curvature tensor over all unit tangent vectors. For $\S_p^{n-1}$, this condition is violated however.

\begin{proposition}\label{prop:Ric}
If $p > 2$, it holds that $c(\S_p^{n-1})=0$.
\end{proposition}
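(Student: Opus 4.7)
The plan is to exhibit a single point $\theta \in \S_p^{n-1}$ at which the full Ricci tensor vanishes, and to combine this with non-negativity of Ricci on convex hypersurfaces in Euclidean space. Concretely, write $F_p(x) := \sum_{i=1}^n |x_i|^p - 1$, so that $\S_p^{n-1} = \{F_p = 0\}$ with exterior unit normal $N(\theta) = \nabla F_p(\theta)/|\nabla F_p(\theta)|_2$. Since $F_p$ is separable, its Hessian is the diagonal matrix $p(p-1)\,\mathrm{diag}(|\theta_i|^{p-2})_{i=1}^n$, and this formula extends continuously across the coordinate hyperplanes precisely because $p > 2$ makes $t \mapsto |t|^{p-2}$ continuous at $0$ with value $0$. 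The scalar second fundamental form of the hypersurface is then
\[
II_\theta(v,w) \;=\; \frac{1}{|\nabla F_p(\theta)|_2}\,\mathrm{Hess}\, F_p(\theta)[v,w] \;=\; \frac{p(p-1)}{|\nabla F_p(\theta)|_2}\sum_{i=1}^n |\theta_i|^{p-2}\, v_i w_i
\]
for tangent $v,w \in T_\theta\S_p^{n-1}$.

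Next, I would evaluate at the coordinate vector $\theta = e_1 = (1,0,\dots,0)$. Here the tangent space is $T_{e_1}\S_p^{n-1} = e_1^\perp$, so every tangent vector $v$ has $v_1 = 0$. For $i \ge 2$ the factor $|\theta_i|^{p-2}$ vanishes (using $p > 2$), while the $i = 1$ term contains the factor $v_1 w_1 = 0$; hence $II_{e_1} \equiv 0$. The Gauss equation for hypersurfaces in $\R^n$ gives the sectional curvature $K(u,v) = II(u,u)\,II(v,v) - II(u,v)^2$, so all sectional curvatures at $e_1$ vanish, and tracing over any orthonormal basis of $T_{e_1}\S_p^{n-1}$ yields $\mathrm{Ric}_{e_1}(v,v) = 0$ for every unit tangent $v$. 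This alone shows $c(\S_p^{n-1}) \le 0$.

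For the matching bound $c(\S_p^{n-1}) \ge 0$, I would invoke convexity of the $\ell_p^n$-ball (valid since $p \ge 1$): $\S_p^{n-1}$ is then a convex hypersurface whose shape operator $S$ is positive semidefinite, with principal curvatures $\lambda_1,\dots,\lambda_{n-1} \ge 0$. Decomposing a unit tangent $v$ in an $S$-eigenbasis, the Gauss formula gives
\[
\mathrm{Ric}(v,v) \;=\; \sum_j \lambda_j v_j^2 \sum_{i \ne j}\lambda_i \;\ge\; 0.
\]
Combining the two estimates yields $c(\S_p^{n-1}) = 0$, as desired.

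The main obstacle is analytic rather than geometric: for non-even $p$, the defining function $F_p$ is only $\mathcal{C}^{\lfloor p\rfloor}$-smooth, so one has to verify that the Hessian expression above and the Gauss equation are genuinely valid in a neighbourhood of $e_1$. The saving fact is that $p > 2$ makes $|t|^{p-2}$ continuous at $0$, which makes $\mathrm{Hess}\, F_p$ continuous and the hypersurface $\mathcal{C}^2$ near $e_1$ — exactly the regularity needed to define and compute the Ricci tensor classically at that point.
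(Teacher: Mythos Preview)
Your argument is correct and takes a genuinely different route from the paper's proof. The paper works at a point $x$ with $x_n \ne 0$ and exactly one other coordinate $x_i = 0$, invokes an explicit sectional-curvature formula for separable hypersurfaces from \cite{ALV23} to see that $K_x(X_i, X_j) = 0$ for all $j$, and infers that $\mathrm{Ric}_x(X_i) = 0$. You instead go to the extreme point $e_1$ and compute the second fundamental form directly from $\mathrm{Hess}\,F_p$; since $p > 2$ forces $|\theta_i|^{p-2} = 0$ for all $i \ge 2$ while tangent vectors have $v_1 = 0$, the entire form $II_{e_1}$ vanishes, and the Gauss equation kills all curvature at once. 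This is more self-contained (no external formula needed) and in fact stronger: at $e_1$ the whole curvature tensor vanishes, not just the Ricci in one direction. You also give the matching lower bound $c(\S_p^{n-1}) \ge 0$ via convexity of the $\ell_p^n$-ball and the Gauss equation, which the paper does not spell out (though it is implicit in the non-negativity of the \cite{ALV23} formula for coordinate planes). Your remark on regularity --- that $p > 2$ makes $|t|^{p-2}$ continuous at $0$, so the hypersurface is $\mathcal{C}^2$ near $e_1$ --- is exactly what is needed to justify the classical Gauss formula at that point and is a point the paper's proof leaves implicit.
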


\begin{proof}
Assume $n\ge 3$. Adapting the notation from \cite{ALV23}, we note that $\S_p^{n-1}$ can be regarded as a so-called separable hyperplane $H$ given by $ \sum_{j=1}^n f_i(x_i) = 1$, where in our case $f_i (x_i) = |x_i|^p$, $i=1,\ldots, n$. Let $x:=(x_1, \ldots, x_n)\in \S_p^{n-1}$ with $ x_n >0 $ and, for simplicity of notation, also $x_i \ge 0$ for all $i = 1, \ldots,n-1$. Then,
\[
\Big(x_1, \ldots, x_{n-1}, x_n:=\Big( 1- \sum_{j=1}^{n-1} x_j^p \Big)^{1/p}\Big)
\]
is a local chart around $x$. A (non-orthogonal) basis of the tangent space at $x$ is given by
\[
X_j:=e_j- \frac{x_i^{p-1}}{x_n^{p-1} } e_n,\quad j=1, \ldots, n-1,
\]
for the standard basis $e_1,\ldots,e_n$ of $\R^n$. By \cite[Eq.\ (4)]{ALV23}, the sectional curvature at $x$ in the plane $(X_i,X_j)$ with $1\le i< j\le n-1$ is then given by
\[
K_x(X_i, X_j) = \frac{ (p-1)^2 (x_i x_j x_n)^{p-2} (x_i^p + x_j^p + x_n^p)} {(x_i^{2 q} + x_j^{2 q} + 
   x_n^{2 q}) (\sum_{j=1}^n x_j^{2q})},
   \]
where $q=p-1$. Hence the sectional curvature $K_x(X_i, X_j)$ vanishes if $x_i=0$ or $x_j=0$.

Now let $x \in \S_p^{n-1}$ be a point with $x_n \ne 0$ and $x_i=0$ for some $1\le i \le n-1$. For an orthonormal system of tangent vectors in $x$ like $Z_1:=X_i, Z_2,\ldots, Z_{n-1}$, say, we may express the Ricci curvature at $x$ in the direction $X_i$ in terms of the sectional curvature as
\[ \mathrm{Ric}_x(X_i) = \frac{1}{n - 1}\sum_{j = 1}^{n - 1} K_x(X_i, Z_j). \]
Since $K_x(X_i, Z_j)$ is linear in $Z_j$ which may be written as linear combination of $X_j$, $j \ne i$, we have at $x$ with $x_i=0$ 
\[  \mathrm{Ric}_x(X_i) =0.\qedhere \]
\end{proof}

Similar remarks also hold for the cone measure, which may be expressed as
\[
\frac {d\mu_{n,p}}{d\nu_{n,p}}= c_{p,n}^{-1} \exp\{ -\Phi\}, \qquad \text{where } \Phi= \frac 1 2\log\Big( \sum_{j=1}^n x_j^{2p-2}\Big)
\]
(cf.\ \eqref{eq:dens}). Indeed, observe that at points $x\in \ell_p^n$ with $x_i=0$ for some $i$, we have $(\mathrm{Hess}\;\Phi+ K_x)(X_i,X_i) =0$, which by the arguments from the proof of Proposition \ref{prop:Ric} violates the extended Bakry--Emery curvature lower bounds by $c\; g(X_i,X_i)$ (using $\Gamma$-calculus, see e.\,g.\ \cite[Cor.\ 4.4.24]{AGZ10}) as well.

\end{document}